\patchcmd{\ttlh@hang}{\parindent\z@}{\parindent\z@\leavevmode}{}{}
\patchcmd{\ttlh@hang}{\noindent}{}{}{}
\theoremstyle{plain}
\newtheorem{theo}{Theorem}[subsection]
\newtheorem{prop}[theo]{Proposition}
\newtheorem{cor}[theo]{Corollary}
\newtheorem{lem}[theo]{Lemma}
\theoremstyle{definition}
\newtheorem{defi}[theo]{Definition}
\newtheorem{ex}[theo]{Example}
\theoremstyle{remark}
\newtheorem{rem}[theo]{Remark}
\DeclareMathOperator{\id}{Id}
\newcommand{\Uqhatg}{\mathcal{U}_{q}(\hat{\mathfrak{g}})}
\newcommand{\C}{\mathscr{C}}
\newcommand{\qt}{(q,t)}
\newcommand{\g}{\mathfrak{g}}
\newcommand{\Z}{\mathbb{Z}}
\newcommand{\N}{\mathbb{N}}
\newcommand{\Yt}{\mathcal{Y}_t}
\newcommand{\F}{\mathcal{F}}
\newcommand{\I}{\hat{I}}
\newcommand{\mI}{\hat{I}^-}
\newcommand{\It}{\hat{I}^-_{\leq 2}}
\newcommand{\Gm}{G^-}
\newcommand{\M}{\mathcal{M}}
\newcommand{\Mm}{\mathcal{M}^-}
\newcommand{\CZ}{\mathscr{C}_\mathbb{Z}}
\newcommand{\CZm}{\mathscr{C}_\mathbb{Z}^-}
\newcommand{\qm}{\chi_q^-}
\newcommand{\A}{ \mathcal{A}}
\newcommand{\Y}{\mathcal{Y}}
\newcommand{\cO}{\mathcal{O}}
\numberwithin{equation}{section}
\author{Léa Bittmann}
\title{A quantum cluster algebra approach to representations of simply-laced quantum affine algebras}
\begin{document}

\raggedbottom

\tikzset{->-/.style={decoration={
  markings,
  mark=at position .5 with {\arrow{>}}},postaction={decorate}}}

\maketitle

\begin{abstract}
We establish a quantum cluster algebra structure on the quantum Grothendieck ring of a certain monoidal subcategory of the category of finite-dimensional representations of a simply-laced quantum affine algebra. Moreover, the $(q,t)$-characters of certain irreducible representations, among which fundamental representations, are obtained as quantum cluster variables. This approach gives a new algorithm to compute these $(q,t)$-characters. As an application, we prove that the quantum Grothendieck ring of a larger category of representations of the Borel subalgebra of the quantum affine algebra, defined in a previous work as a quantum cluster algebra, contains indeed the well-known quantum Grothendieck ring of the category of finite-dimensional representations. Finally, we display our algorithm on a concrete example.
\end{abstract}

\setcounter{tocdepth}{1}
\tableofcontents

%\newpage

	\section{Introduction}

Finite-dimensional representations of quantum affine algebras have been classified by Chari-Pressley \citep{CP95} with a quantum analog of Cartan's highest weight classification of finite-dimensional representations of simple Lie algebras. Combining this classification with the notations from Frenkel-Reshetikhin $q$-character \citep{QCRQAA} theory, one gets the following.

Let $\g$ be a finite-dimensional simple Lie algebra, and $\Uqhatg$ be the quantum affine algebra. Irreducible finite-dimensional representations of $\Uqhatg$ are indexed by monomials in the infinite set of variables $\{Y_{i,a}\}_{i\in I, a\in \mathbb{C}^\times}$, where $I=\{1, \ldots,n\}$ are the vertices of the Dynkin diagram of $\g$. For such a monomial $m$, the corresponding simple $\Uqhatg$-module is denoted by $L(m)$. If the monomial is just one term $m=Y_{i,a}$, the corresponding simple module $L(Y_{i,a})$ is called a \emph{fundamental module}. Chari-Pressley classification result also implies that every simple module can be obtained as a subquotient of a tensor product of such fundamental modules. 

This classification is a major result in the way of obtaining information of the finite-dimensional representations of quantum affine algebras. However, it gives limited information regarding the structure of the modules in themselves. For that purpose, Frenkel and Reshetikhin have developed a theory of $q$-characters, giving the decomposition of the modules into generalized eigenspaces for the action of a large commutative subalgebra of $\Uqhatg$. Frenkel-Mukhin established an algorithm to compute those $q$-characters \citep{CqC}. This algorithm is guaranteed to work on fundamental modules, but not on all irreducible $\Uqhatg$-modules \citep{oFMA}.

Then, when $\g$ is of simply-laced type, Nakajima \citep{tAqC} used the input from geometry, and more precisely perverse sheaves on quiver varieties, to construct $t$-deformations of these $q$-characters, called $\qt$-characters, as elements of a \emph{quantum Grothendieck ring}. He introduced a second base for the Grothendieck ring of the category of finite-dimensional representations of $\Uqhatg$, also indexed by the monomials in the variables $\{Y_{i,a}\}_{i\in I, a\in \mathbb{C}^\times}$, formed by the \emph{standard modules}. Geometrically, these standard modules correspond to constant sheaves, but algebraically, for each monomial $m$, the standard module $M(m)$ can be seen as the tensor product of the fundamental modules corresponding to each of the factors in $m$, in a particular order (see also \citep{SMQAA}).

He first used a $t$-deformed version of Frenkel-Mukhin's algorithm to compute $\qt$-characters for the fundamental modules, then extended the $\qt$-characters to the standard modules, denoted $[M(m)]_t$. Next, he defined $\qt$-characters for the simple modules as some unique family of elements $[L(m)]_t$ of the quantum Grothendieck ring satisfying some invariant property, as well as having a decomposition of the form
\begin{equation}\label{inverseKL}
[L(m)]_t = [M(m)]_t + \sum_{m'<m}Q_{m',m}(t)[M(m')]_t,
\end{equation}
where $<$ is a partial order on the set of Laurent monomials in the variables $\{Y_{i,a}\}_{i\in I, a\in \mathbb{C}^\times}$, defined by Nakajima, and $Q_{m',m}(t)\in \Z[t^{\pm 1}]$ is a Laurent polynomial. 

Nakajima then showed that these $\qt$-characters were indeed $t$-deformations of the $q$-characters, in the sense that the evaluation of the $\qt$-characters at $t=1$ recovers the $q$-characters. Finally, inverting the unitriangular decomposition (\ref{inverseKL}), one gets an algorithm, of the Kazhdan-Lusztig type, to compute the $\qt$-characters, and so the $q$-characters of all simple finite-dimensional $\Uqhatg$-modules.

This algorithm is theoretically computable, but as noted in \citep{NE8}, trying to compute it in reality can easily exceed the size of computer memory available. The first step of the algorithm is to compute the $\qt$-characters of the fundamental representations, and for example, for $\g$ of type $E_8$, the 5th fundamental representation requires 120Go of memory to compute. 

In \citep{CAQAA} Hernandez and Leclerc introduced a new point of view on representations of quantum affine algebras, using the theory of cluster algebras that was developed by Fomin and Zelevinsky in the early 2000's \citep{CA1}, \citep{CA2}, \citep{CA3},\citep{CA4}. In \citep{ACAA} they established a new algorithm to compute $q$-characters of a particular class of irreducible modules, called \emph{Kirillov-Reshetikhin modules}, which include the fundamental modules, using the cluster algebra structure of the Grothendieck ring of a subcategory of the category of finite-dimensional $\Uqhatg$-modules. The picture is completed when put into the broader context of the category $\cO^+$ of representations of $\Uqhatg$, introduced by Hernandez-Jimbo in \citep{ARDRF}. In \citep{CABS}, Hernandez and Leclerc showed that the Grothendieck ring of this category, which contains the finite-dimensional representations, is isomorphic to a cluster algebra built on an infinite quiver, while explicitly giving the identification. 

In a previous work \citep{LEA2}, the author defined the quantum Grothendieck ring for this category $\cO^+$ of representations as a quantum cluster algebra, as defined by Berenstein and Zelevinsky \citep{qCA}. However, the question of whether this quantum Grothendieck ring contained the quantum Grothendieck of the category of finite-dimensional representation, as used by Nakajima, was only proven in type $A$, and remained conjectural for other types. 

In this Chapter we propose to show that, when $\g$ is of simply-laced type, the quantum Grothendieck ring of a certain monoidal subcategory of the category of finite-dimensional $\Uqhatg$-modules has a quantum cluster algebra structure (Proposition~\ref{propfinale}). The proof relies heavily on a family of relations satisfied by the $\qt$-characters of the Kirillov-Reshetikhin modules called \emph{quantum $T$-systems} proved in \citep{tAqCKR}. These relations are $t$-deformations of the $T$-systems relations, first stated in \citep{FRSLM}. These relations have not been generalized to non-simply-laced types, except for type $B_n$ in \citep{HO19}. This is the main reason why the results of this chapter are limited to $ADE$ types. 
This quantum cluster algebra approach gives a new algorithm to compute the $\qt$-characters of the Kirillov-Reshetikhin modules, and in particular of the fundamental modules (see Proposition~\ref{propyirm}). This algorithm seems more efficient, at least in terms of number of steps, than the Frenkel-Mukhin algorithm (see Remark~\ref{remcomplexity}). 

For certain subcategories of the category of finite-dimensional $\Uqhatg$-modules generated by a finite number of fundamental modules, Qin obtained in \citep{QIN} in a different context results similar to some results whose direct proofs are given here (see Remark~\ref{remQin}). In this present work, we give explicit sequences of mutations to obtain 
$\qt$-characters of fundamental modules.

Next, we use this new result to prove a conjecture that was stated by the author in  the aforementionned work \citep{LEA2}. This previous work dealt with a category $\cO^+$ of representations of the Borel subalgebra of the quantum affine algebra, which contains the finite-dimensional $\Uqhatg$-modules. The quantum Grothendieck ring of this category was defined as a quantum cluster algebra, and it was conjectured that this ring contained the quantum Grothendieck ring of the category of finite-dimensional representations. Here, we show that the quantum cluster algebra considered in  \citep{LEA2} can be seen as a twisted version (in the sense of \citep{GRA2}) of the quantum cluster algebra occurring in the finite-dimensional case (see Proposition~\ref{propisoquant}). As an application, the $\qt$-characters of the fundamental modules are obtained as quantum cluster variables in the quantum Grothendieck ring of the category $\cO^+$ (Proposition~\ref{propfinale}), and the inclusion of quantum Grothendieck rings conjectured in  \citep{LEA2} follows naturally (Theorem~\ref{theoconj2} and Corollary~\ref{corconj}). 

Note that these results extend the algorithm to compute $\qt$-characters of some simple modules in the category $\cO^+$. However, for this category of representations, the question of defining analogs of standard modules remains open. The author tackled this question in a previous work \citep{LEA}, and gave a complete answer when the underlying simple Lie algebra is $\g=\mathfrak{sl}_2$. This work is also a partial answer to the first point of Nakajima's "to do" list from \citep{QVCA}.

The author would also like to note that in type $A$, parallel results to the ones presented here were proven in \citep{Bolor}, via a different approach. In this work, $\qt$-characters of Kirillov-Reshetikhin modules are also obtained as quantum cluster variables in some quantum cluster algebra, the method uses a generalization of the tableaux-sum notations introduced by Nakajima in \citep{NtqAD}. 

Finally, we use this algorithm to explicitly compute, when $\g$ is of type $D_4$, the $\qt$-character of the fundamental representation at the trivalent node. 

This paper is organized as follows. In Sections \ref{sectCartan} and \ref{sectFDRQAA} we recall notations and results regarding finite-dimensional representations of quantum affine algebras. In Section \ref{sectqGRs}, we recall results regarding the $t$-deformation of Grothendieck rings, such as $\qt$-characters and quantum $T$-systems. In Section \ref{sectqCAstructure} we prove the existence of a quantum cluster algebra $\A_t$ with $t$-commutations relations coherent with the framework of $\qt$-characters. Then, in Section \ref{sectqCAqGR} we prove that this quantum cluster algebra is isomorphic to the quantum Grothendieck ring of a certain monoidal subcategory of the category of finite-dimensional $\Uqhatg$-modules; in this process, we established an algorithm to compute $\qt$-characters of Kirillov-Reshetikhin modules. Section \ref{sectapp} is devoted to the category $\cO^+$, and to the proof of the inclusion Conjecture of \citep{LEA2}. Finally, the explicit computation mentioned just above is done in Section \ref{sectexplD4}.
	
	\subsection*{Acknowledgements}
	
I would like to thank my PhD advisor, David Hernandez, under whose supervision most of the work presented here was carried out. I thank him for his constant support, for his time, and for the numerous discussions. I also thank Bernhard Keller for sharing with me the lastest update of his quiver mutation applet, as well as instructions for the new \emph{highest weight} feature.

	\section{Cartan data and quantum Cartan data}\label{sectCartan}

		\subsection{Root data}\label{sectdata}	
		
Let us fix some notations for the rest of the paper. Let the $\g$ be a simple Lie algebra of rank $n$ and of type $A$,$D$ or $E$. This restriction is necessary as one of the main arguments of the proof is the quantum $T$-systems, which have only been proven for these types as yet. Let $\gamma$ be the Dynkin diagram of $\g$ and let $I:=\{ 1, \ldots, n \}$ be the set of vertices of $\gamma$. 

The \emph{Cartan matrix} of $\g$ is the $n\times n$ matrix $C$ such that 
\begin{equation*}
C_{i,j}=\left\lbrace \begin{array}{rl}
				2 & \text{ if }  i=j,\\
				-1 & \text{ if }  i \sim j \quad\text{( } i \text{ and } j \text{ are adjacent vertices of } \gamma  \text{ ) },\\
				0 & \text{ otherwise.}
\end{array}\right.
\end{equation*}

Let us denote by $(\alpha_{i})_{i\in I}$ the simple roots of $\mathfrak{g}$, $(\alpha_{i}^{\vee})_{i\in I}$ the simple coroots and $(\omega_{i})_{i\in I}$ the fundamental weights. We will use the usual lattices $Q= \bigoplus_{i\in I}\mathbb{Z}\alpha_{i}$, $Q^{+}= \bigoplus_{i\in I}\mathbb{N}\alpha_{i}$ and $P= \bigoplus_{i\in I}\mathbb{Z}\omega_{i}$. Let $P_{\mathbb{Q}}=P\otimes \mathbb{Q}$, endowed with the partial ordering : $\omega \leq \omega'$ if and only if $\omega' - \omega \in Q^{+}$. 

The Dynkin diagram of $\g$ is numbered as in \citep{IDLA}, and let $a_1,a_2,\ldots,a_n$ be the Kac labels ($a_0=1$).

Let $h$ be the (dual) Coxeter number of $\g$:
\begin{equation}\label{tabh}
\begin{array}{|c|c|c|c|c|c|}
\hline
\g & A_n & D_n & E_6 & E_7 & E_8\\
\hline
h & n+1 & 2n-2 & 12 & 18 & 30\\
\hline
\end{array}
\end{equation}

		\subsection{Quantum Cartan matrix}\label{sectQCM}
		
Let $z$ be an indeterminate.

\begin{defi}
The \textit{quantum Cartan matrix} of $\g$ is the matrix $C(z)$ with entries,
\begin{equation*}
C_{ij}(z)=\left\lbrace \begin{array}{cl}
				z+z^{-1} & \text{ if } i=j,\\
				-1 & \text{ if } i \sim j,\\
				0 & \text{ otherwise.}
\end{array}\right.
\end{equation*}
\end{defi}
\begin{rem}
The evaluation $C(1)$ is the Cartan matrix of $\mathfrak{g}$. As $\det(C)\neq 0$, then $\det(C(z))\neq 0$ and we can define $\tilde{C}(z)$, the inverse of the matrix $C(z)$. The entries of the matrix $\tilde{C}(z)$ belong to $\mathbb{Q}(z)$. 
\end{rem}

One can write 
\begin{equation*}
C(z) = (z+z^{-1})\id - A,
\end{equation*}
where $A$ is the adjacency matrix of $\gamma$. Hence,
\begin{equation*}
\tilde{C}(z) = \sum_{m=0}^{+\infty}(z+z^{-1})^{-m-1}A^{m}.
\end{equation*}
Therefore, we can write the entries of $\tilde{C}(z)$ as power series in $z$. For all $i,j\in I$,
\begin{equation}\label{qCm}
\tilde{C}_{i j}(z) = \sum_{m=1}^{+\infty}\tilde{C}_{i,j}(m)z^{m}\quad \in \mathbb{Z}[[z]].
\end{equation}
\begin{ex}\begin{enumerate}[(i)]\label{exsl2-1}
	\item For $\g=\mathfrak{sl}_{2}$, one has 
\begin{equation}
\tilde{C}_{1 1} = \sum_{n=0}^{+\infty}(-1)^{n}z^{2n+1} = z-z^{3}+z^{5}-z^{7}+z^{9}-z^{11} + \cdots \\
\end{equation}
		\item For $\g=\mathfrak{sl}_{3}$, one has 
\begin{align*}
\tilde{C}_{ii} &= z-z^{5}+z^{7}-z^{11}+z^{13} + \cdots, \quad 1 \leq i \leq 2\\
\tilde{C}_{ij} &= z^{2}-z^{4}+z^{8}-z^{10}+z^{14} + \cdots, \quad 1 \leq i \neq j \leq 2.
\end{align*}
\end{enumerate}
\end{ex}

We will need the following lemma (see \citep[Lemma 3.2.4]{LEA2}):
\begin{lem}\label{lemCtildeC}
For all $(i,j)\in I^2$, 
\begin{align*}
\tilde{C}_{ij}(m-1) + \tilde{C}_{ij}(m+1) - \sum_{k \sim j}\tilde{C}_{ik}(m) & = 0, \quad \forall m \geq 1, \\
\tilde{C}_{ij}(1) & = \delta_{i,j}.
\end{align*}
\end{lem}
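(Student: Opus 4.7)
The plan is to read off both identities as coefficient extractions from the matrix equation $C(z)\tilde{C}(z)=I$, using the explicit form of $C(z)$ recalled just above the lemma. Everything follows from a single generating-function identity.

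First I would write $C(z)=(z+z^{-1})\id-A$ with $A$ the adjacency matrix of $\gamma$, so that $A_{kj}=1$ exactly when $k\sim j$ and is zero otherwise. Multiplying the relation $\tilde{C}(z)C(z)=I$ on the right (or, equivalently since $C(z)$ is symmetric, expanding $C(z)\tilde{C}(z)=I$) gives, for every $(i,j)\in I^2$,
\begin{equation*}
(z+z^{-1})\,\tilde{C}_{ij}(z)\;-\;\sum_{k\sim j}\tilde{C}_{ik}(z)\;=\;\delta_{ij}.
\end{equation*}
This is the only nontrivial input; the rest is bookkeeping on power series.

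Next I would extract the coefficient of $z^m$ on both sides, using the expansion (\ref{qCm}). Since $\tilde{C}_{ij}(z)\in z\,\mathbb{Z}[[z]]$ (which is visible from the formula $\tilde{C}(z)=\sum_{m\ge 0}(z+z^{-1})^{-m-1}A^m$, because $(z+z^{-1})^{-1}=z-z^3+z^5-\cdots$ lies in $z\mathbb{Z}[[z]]$), we may adopt the convention $\tilde{C}_{ij}(0)=0$ and $\tilde{C}_{ij}(m)=0$ for $m<0$. Then the coefficient of $z^m$ in $(z+z^{-1})\tilde{C}_{ij}(z)$ is $\tilde{C}_{ij}(m-1)+\tilde{C}_{ij}(m+1)$, while the coefficient of $z^m$ in $\sum_{k\sim j}\tilde{C}_{ik}(z)$ is $\sum_{k\sim j}\tilde{C}_{ik}(m)$, and $\delta_{ij}$ contributes only at $m=0$.

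Reading off $m=0$ yields $\tilde{C}_{ij}(1)=\delta_{ij}$ (since the sum over $k\sim j$ has no $z^0$ term and $\tilde{C}_{ij}(-1)=0$), which is the second asserted identity. For $m\ge 1$ one reads off
\begin{equation*}
\tilde{C}_{ij}(m-1)+\tilde{C}_{ij}(m+1)-\sum_{k\sim j}\tilde{C}_{ik}(m)=0,
\end{equation*}
which is the first identity (the case $m=1$ uses $\tilde{C}_{ij}(0)=0$). There is no real obstacle here: the only subtlety worth double-checking is that the series expansion of $\tilde{C}(z)$ really begins at $z^1$, so that the boundary cases of the recursion match the stated initial condition $\tilde{C}_{ij}(1)=\delta_{ij}$ rather than some shift.
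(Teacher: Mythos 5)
Your argument is correct: expanding $\tilde{C}(z)C(z)=\id$ entrywise with $C(z)=(z+z^{-1})\id-A$ and comparing coefficients of $z^m$ in $\Z[[z]]$ (using that the expansion (\ref{qCm}) starts at $z^1$, so $\tilde{C}_{ij}(0)=\tilde{C}_{ij}(-1)=0$) yields both identities, with the $m=0$ coefficient giving $\tilde{C}_{ij}(1)=\delta_{i,j}$. The present paper does not reprove this lemma but cites \citep[Lemma 3.2.4]{LEA2}, and your coefficient-extraction argument is precisely the natural proof underlying that reference, so there is nothing to add.
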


Let us extend the functions $ \tilde{C}_{ij}$ to symmetrical functions on $\Z$
\begin{equation}
{\bf C}_{i,j}(m) := \tilde{C}_{ij}(m) + \tilde{C}_{ij}(-m)  \quad (m\in\Z),
\end{equation}
with the usual convention $\tilde{C}_{ij}(m)=0$ if $m\leq 0$.

Then Lemma \ref{lemCtildeC} translates as:

\begin{equation}\label{Clem}
{\bf C}_{ij}(m-1) + {\bf C}_{ij}(m+1) - \sum_{k \sim j}{\bf C}_{ik}(m) = \left\lbrace \begin{array}{ll}
2\delta_{i,j} & \text{ if } m=0,\\
0 & \text{ otherwise.}
\end{array} \right.
\end{equation}

		\subsection{Height function}\label{sectheightfun}
		
As $\g$ is simply-laced, its Dynkin diagram $\gamma$ is a bipartite graph. There is partition $I=I_0 \sqcup I_1$ such that every edge in $\gamma$ connects a vertex of $I_0$ to a vertex of $I_1$. 

\begin{defi}
Define, for all $i\in I$,
\begin{equation}
\xi_i = \left\lbrace \begin{array}{ll}
0 & \text{ if } i \in I_0\\
1 & \text{ if } i \in I_1
\end{array}\right.
\end{equation}
The map $\xi : I\to \{0,1\}$ is called a \emph{height function} on $\gamma$.
\end{defi}

\begin{rem}
In more generality, every function $\xi : I \to \Z$ satisfying 
\begin{equation*}
\xi(j) = \xi(i) \pm 1, \text{ when } j\sim i
\end{equation*}
is a height function on $\gamma$. It defines an orientation of the Dynkin diagram $\gamma$: 
\begin{equation*}
i\to j \text{ if } \xi(j) = i+1.
\end{equation*}
Our particular choice of height function defines a \emph{sink-source} orientation.
\end{rem}

\begin{ex}
If $\g$ if of type $D_5$, then $\gamma$ is 
\begin{minipage}[c]{12em}
\begin{center}
\begin{tikzpicture}[scale=0.8]
\node[circle,draw=black, fill=white] (0) at (0,0) {};
\node[circle,draw=black, fill=white] (1) at (1,0) {};
\node[circle,draw=black, fill=white] (2) at (2,0) {};
\node[circle,draw=black, fill=white] (3) at (3,0) {};
\node[circle,draw=black, fill=white] (4) at (2,0.8) {};
\draw (0) -- (1);
\draw (1) -- (2);
\draw (2) -- (3);
\draw (2) -- (4);
\end{tikzpicture}
\end{center}
\end{minipage}

and if we fix $\xi_1 = 0$, then
\begin{align*}
\xi_1 = 1, \quad \xi_3 & = 1,\\
\xi_2 = 0, \quad \xi_4 & = 0.
\end{align*}
\end{ex}

From now on, we fix such a height function $\xi$.

We will also use the notation:
\begin{equation}\label{epsilon}
\epsilon_i : =(-1)^{\xi_i} \quad \in \{ \pm 1\} \quad (i\in I).
\end{equation}

		\subsection{Semi-infinite quiver}\label{sectinftyquiver}

Let us define an infinite quiver $\Gamma$ as in \citep{ACAA}. 

First, let
\begin{equation}
\hat{I}:= \bigcup_{i\in I}\left(i, 2\Z+\xi_i\right).
\end{equation}

Let $\Gamma$ be the quiver with vertex set $\I$ and arrows
\begin{equation}
\left( (i,r) \rightarrow (j,s) \right) \Longleftrightarrow \left( C_{i,j} \neq 0 \text{ and } s= r + C_{i,j} \right).
\end{equation}

\begin{ex}\label{exsl4}
For $\g=\mathfrak{sl}_{4}$, one choice of $\I$ is
\begin{equation*}
\hat{I} =  (1,2\Z) \cup  (2,2\Z+1) \cup (3,2\Z),
\end{equation*}
and $\Gamma$ is the following:
\[
\xymatrix@R=0.5em@C=1.5em{
	& \vdots & \\
\vdots & (2,1) \ar[dl]\ar[dr]\ar[u] & \vdots \\
(1,0)\ar[dr]\ar[u] & & (3,0) \ar[dl]\ar[u] \\
& (2,-1) \ar[dl]\ar[dr]\ar[uu] & \\
(1,-2)\ar[dr]\ar[uu] & & (3,-2)\ar[dl]\ar[uu] \\
& (2,-3) \ar[dl]\ar[dr]\ar[uu] & \\
(1,-4)\ar[uu] &  \vdots \ar[u] & (3,-4)\ar[uu] \\
\vdots \ar[u] &   	& \vdots \ar[u]
}
\]
\end{ex}
	
\begin{defi}
Let
\begin{equation*}
\hat{I}^- := \hat{I}\cap \left( I\times \Z_{\leq 0}\right).
\end{equation*}
And define $G^-$ to be the semi-infinite subquiver of $\Gamma$ of vertex set $\mI$.

Let 
\begin{equation}\label{defJ}
\hat{J} : = \left( I\times \Z \right) \setminus \I.
\end{equation}
\end{defi}

\begin{ex}\label{exsl42}
Following Example \ref{exsl4}, for $\g=\mathfrak{sl}_{4}$, with the same choice of height function, one has
\begin{equation*}
\mI =  (1,2\Z_{\leq 0}) \cup  (2,2\Z_{\leq 0}-1) \cup (3,2\Z_{\leq 0}),
\end{equation*}
and $G^-$ is the following:
\[
\xymatrix@R=0.5em@C=1.5em{
 (1,0)\ar[dr] & & (3,0) \ar[dl] \\
& (2,-1) \ar[dl]\ar[dr] & \\
(1,-2)\ar[dr]\ar[uu] & & (3,-2)\ar[dl]\ar[uu] \\
& (2,-3) \ar[dl]\ar[dr]\ar[uu] & \\
(1,-4)\ar[uu] &  \vdots \ar[u] & (3,-4)\ar[uu] \\
\vdots \ar[u] &   	& \vdots \ar[u]
}
\]
\end{ex}

Finally, we recall a useful notation from \citep{ACAA}. For $(i,r)\in\mI$, define
\begin{equation}\label{kir}
k_{i,r}:= \frac{-r+\xi_i}{2}.
\end{equation}

The vertex $(i,r)$ is the $k_{i,r}$th vertex in its column in $G^-$, starting at the top.

	\section{Finite-dimensional representations of quantum affine algebras}\label{sectFDRQAA}

In this section, we recall the notations and different results regarding quantum affine algebras and finite-dimensional representations of quantum affine algebras.

			\subsection{Quantum affine algebra}
		
Let $\hat{\g}$ be the untwisted affine Lie algebra corresponding to $\g$.

Fix an nonzero complex number $q$, which is not a root of unity, and $h\in\mathbb{C}$ such that $q=e^{h}$. Then for all $r\in \mathbb{Q}, q^{r}:=e^{rh}$. Since $q$ is not a root of unity, for $r,s\in \mathbb{Q}$, we have $q^{r}=q^{s}$ if and only if $r=s$.

Let $\Uqhatg$ be the \emph{quantum enveloping algebra} of the Lie algebra $\hat{\g}$ (see \citep{GQG}), it is a $\mathbb{C}$-Hopf algebra.

	\subsection{Finite-dimensional representations}

Let $\C$ be the category of all (type 1) finite-dimensional $\Uqhatg$-modules. As $\Uqhatg$ is a Hopf algebra, $\C$ is a tensor category. The simple modules in $\C$ have been classified by Chari-Pressley (\citep{GQG}), in terms of Drinfeld polynomials. 

The simple finite-dimensional $\Uqhatg$-modules are indexed by the monomials in the infinite set of variables $(Y_{i,a})_{i\in I, a \in \mathbb{C}^\times}$, called \emph{dominant monomials} (\citep{QCRQAA}). For such a monomial $m$, let $L(m)$ denote the corresponding simple $\Uqhatg$-module.

We define the following sets of dominant monomials:
\begin{equation*}
\mathcal{M}= \left\lbrace \prod_{\text{finite}}Y_{i,q^r}^{n_{i,r}} \mid (i,r)\in\I , n_{i,r} \in \Z_{\geq 0}, n_{i,r}=0 \text{ except for a finite number of } (i,r) \right\rbrace,
\end{equation*}
\begin{equation*}
\mathcal{M}^-= \left\lbrace \prod_{\text{finite}}Y_{i,q^r}^{n_{i,r}} \mid (i,r)\in\mI ,n_{i,r} \in \Z_{\geq 0}, n_{i,r}=0 \text{ except for a finite number of } (i,r) \right\rbrace.
\end{equation*}

\begin{defi}
Let $\C_\Z$ be the full subcategory of $\C$ of objects whose composition factors are of the form $L(m)$, with $m\in\M$.

Let $\C_\Z^-$ be the full subcategory of $\C$ of objects whose composition factors are of the form $L(m)$, with $m\in\Mm$.
\end{defi}

The category $\CZ$ is a tensor category and $\CZm$ is a monoidal category (\citep[5.2.4]{CAQAA} and \citep[Proposition 3.10]{ACAA}).

\begin{rem}
Every simple object in $\C$ can be written as a tensor product of simple objects which are essentially in $\C_\Z$ (see \citep[Section 3.7]{CAQAA}). Thus, the description of the simple objects of $\C$ reduces to the description of the simple objects of $\C_\Z$.
\end{rem}

Let us introduce some particular irreducible finite-dimensional representations.
\begin{defi}
For all $(i,r)\in\I$, $V_{i,r}:=L(Y_{i,r})$ is called a \emph{fundamental module}.

For all $(i,r)\in\I, k\in \Z_{>0}$, let
\begin{equation}
m_{k,r}^{(i)}:=Y_{i,r}Y_{i,r+2}\cdots Y_{i,r+2k-2},
\end{equation}
the corresponding irreducible module $L(m_{k,r}^{(i)})$ is called a \emph{Kirillov-Reshetikhin module}, or \emph{KR-module}, and denote by
\begin{equation}
W_{k,r}^{(i)} := L(m_{k,r}^{(i)}).
\end{equation}
\end{defi}

Note that fundamental module are particular KR-modules, for $k=1, m_{1,r}^{(i)}= Y_{i,r}$.

%\begin{rem}
%In type $A$, the Kirillov-Reshetikhin modules are evaluation modules, 
%\end{rem}

		\subsection{$q$-characters and truncated $q$-characters}\label{sectqcharac}
		
Frenkel and Reshetikhin introduced in \citep{QCRQAA} an injective ring morphism, called the \emph{$q$-character} morphism, on the Grothendieck ring $K_0(\C)$ of the category $\C$:
\begin{equation}
\chi_q : K_0(\C)  \to  \Z\left[Y_{i,a}^{\pm 1} \mid i\in I,a\in \mathbb{C}^\times \right].
\end{equation}
Moreover, the $q$-character $\chi_q(V)$ of a $\Uqhatg$-module $V$ gives information about the decomposition into Jordan subspaces for the action of a large commutative subalgebra of $\Uqhatg$.

Here, as we restrict ourselves to the study of the category $\CZ$, the $q$-character will only involve variables $Y_{i,q^r}^{\pm 1}$, for $(i,r)\in\I$. Hence, for simplicity of notation we denote them by: $Y_{i,r}:=Y_{i,q^r}$. The $q$-character we are interested in is the injective ring morphism:
\begin{equation}
\chi_q : K_0(\CZ)  \to  \mathcal{Y}:=\Z\left[Y_{i,r}^{\pm 1} \mid (i,r)\in \I\right].
\end{equation}
We use the usual notation \citep{QCRQAA},
\begin{equation}\label{A}
A_{i,r} := Y_{i,r-1}Y_{i,r+1}\left( \prod_{j\sim i} Y_{j,r,}\right)^{-1}.
\end{equation}
for all $(i,r)\in \hat{J}$ (see (\ref{defJ})).
Note that $A_{i,r}$ is a Laurent monomial in the variables $Y_{j,s}$, with $(j,s)\in\I$. The monomials $A_{i,r}$ are analogs of the simple roots.

\begin{prop}\label{prop1+}\citep[Theorem 4.1]{CqC}
For $m$ a dominant monomial in $\M$, the $q$-character of the finite-dimensional irreducible representation $L(m)$ is of the form
\begin{equation}
\chi_q(L(m)) = m \left( 1 + \sum_p M_p \right),
\end{equation}
where $M_p$ is a monomial in the variables $A_{i,r}^{-1}$, with $(i,r)\in\hat{J}$.
\end{prop}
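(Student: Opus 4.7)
The plan is a two-step reduction, bringing the general case down to the case of fundamental modules. By the Chari--Pressley classification recalled in the introduction, for any dominant monomial $m = \prod_{k} Y_{i_k,a_k}$ the simple module $L(m)$ is a subquotient of an appropriately ordered tensor product $\bigotimes_{k} V_{i_k,a_k}$ of fundamentals. Since $\chi_q$ is an injective ring morphism and is additive on short exact sequences, the set of monomials appearing in $\chi_q(L(m))$ is contained in the set of monomials appearing in $\prod_{k}\chi_q(V_{i_k,a_k})$. If each factor has the form $Y_{i_k,a_k}\bigl(1 + \sum_{p} M_p^{(k)}\bigr)$ with each $M_p^{(k)}$ a monomial in the $A_{\cdot,\cdot}^{-1}$, then the product expands to $m\bigl(1+\sum_{p}N_p\bigr)$ with each $N_p$ still a monomial in the $A_{\cdot,\cdot}^{-1}$, and the subset property yields the desired form for $\chi_q(L(m))$. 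The coefficient of $m$ itself is exactly $1$ because the highest $l$-weight space of a simple highest $l$-weight module is one-dimensional.

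The core of the argument therefore lies in the fundamental modules $V_{i,r}=L(Y_{i,r})$, which I would treat via Drinfeld's loop presentation of $\Uqhatg$. The highest $l$-weight vector $v$ generates $V_{i,r}$ under the lowering generators $x^-_{j,n}$, and $\chi_q(V_{i,r})$ records the joint spectrum of the commuting operators $\phi^\pm_{j,\pm k}$. From the Drinfeld commutation relations between $\phi^\pm_{j,\pm k}$ and $x^-_{i,n}$, a direct computation on generating series shows that applying $x^-_{i,n}$ to $v$ for suitable $n$ yields an $l$-weight vector of $l$-weight $Y_{i,r}\cdot A_{i,r+1}^{-1}$, and that subsequent applications of lowering generators multiply the $l$-weight by further factors of $A_{\cdot,\cdot}^{-1}$. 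A convenient way to organize the bookkeeping is to restrict $V_{i,r}$ to the $\mathcal{U}_q(\widehat{\mathfrak{sl}_2})$-subalgebras attached to each simple root, where the rank-one $q$-characters take an explicit ``string'' form, and then propagate across the Dynkin diagram; this is essentially the content of the Frenkel--Mukhin algorithm recalled in the introduction.

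The main obstacle is precisely this rank-one base case: one must check that on restriction the Drinfeld generators act by the known string formulas, so that each application of a lowering operator genuinely contributes a single $A_{\cdot,\cdot}^{-1}$ factor to the $l$-weight. Once this is granted, the parity constraint $(i,r)\in\hat J$ of the statement is automatic from the definition (\ref{A}) of $A_{i,r}$: this monomial involves $Y_{i,r\pm 1}$ and $Y_{j,r}$ for $j\sim i$, and for these to lie in $\mathcal Y=\mathbb Z[Y_{j,s}^{\pm 1}\mid (j,s)\in\hat I]$ one needs $r\pm 1\equiv\xi_i$ and $r\equiv\xi_j=\xi_i+1\pmod 2$, which together force $r\not\equiv\xi_i\pmod 2$, i.e.\ $(i,r)\in\hat J$. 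Finiteness of the sum $\sum_p M_p$ follows from $\dim L(m)<\infty$, and can be independently cross-checked via the classical projection $Y_{i,a}\mapsto\omega_i$, under which $A_{i,r}\mapsto\alpha_i$ and $\chi_q$ becomes the ordinary character: the $M_p$ then correspond to the expression of the weights of $L(m)$ as the highest weight minus nonnegative integer combinations of simple roots.
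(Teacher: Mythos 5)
First, a point of comparison: the paper does not prove this statement at all --- Proposition~\ref{prop1+} is imported from Frenkel--Mukhin \citep[Theorem 4.1]{CqC} --- so your proposal has to stand on its own. Its outer layer is fine: the reduction to fundamental modules via Chari--Pressley, using nonnegativity of $q$-character coefficients and additivity in the Grothendieck ring to conclude that every monomial of $\chi_q(L(m))$ occurs in $\prod_k\chi_q(V_{i_k,a_k})$, is correct, and the parity bookkeeping forcing $(i,r)\in\hat J$ is harmless (granted algebraic independence of the $A_{i,r}$). The genuine gap is that the heart of the theorem --- the case of fundamental modules --- is never actually proved: you defer it to ``a direct computation on generating series'' and to ``the content of the Frenkel--Mukhin algorithm,'' and you yourself flag the rank-one verification as the main obstacle without closing it. Worse, the mechanism you describe is not correct as stated: $\ell$-weight spaces are \emph{generalized} eigenspaces for the $\phi^{\pm}_{j,\pm k}$, and applying a lowering generator $x^-_{j,n}$ to an $\ell$-weight vector does not in general produce an $\ell$-weight vector whose $\ell$-weight differs by a single factor $A_{j,s}^{-1}$; nor are all $\ell$-weights of $V_{i,r}$ captured by such vector-by-vector bookkeeping. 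Appealing to the Frenkel--Mukhin algorithm is also backwards: its validity for fundamental modules is itself established using statements of exactly this type (and, as recalled in the introduction, the algorithm can fail for general simple modules).

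The standard proof (Frenkel--Mukhin's) needs no reduction to fundamental modules and works at the level of $q$-characters rather than vectors. By the Frenkel--Reshetikhin restriction theorem, for each $i$ the $q$-character of a simple module decomposes according to its restriction to $\mathcal{U}_q(\widehat{\mathfrak{sl}}_2)_i$, and the rank-one $q$-characters are explicit strings; hence every monomial $m'$ of $\chi_q(L(m))$ can be written as $m''\prod_l A_{i,a_l}^{-1}$ where $m''$ is another monomial of $\chi_q(L(m))$, $i$-dominant, of weight at least that of $m'$. If $m'\neq m$, then $m'$ cannot be the $i$-highest monomial of its rank-one constituent for every $i$: otherwise the corresponding vector would be a singular vector, and simplicity of $L(m)$ forces it to be the highest $\ell$-weight vector. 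So for some $i$ the product above is nonempty and $m''$ has strictly larger weight; induction on the weight then yields $m'\in m\,\Z[A_{j,s}^{-1}\mid (j,s)\in\hat J]$. Your restriction-to-rank-one idea points at exactly this argument, but without the singular-vector step and the induction it remains a sketch rather than a proof; if you do not want to reprove the rank-one structure and the restriction theorem, the honest course is to do what the paper does and cite \citep{CqC}.
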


Let us recall Nakajima's partial order on monomials. For $m$ and $m'$ Laurent monomials in $\Y$, 
\begin{equation}\label{orderN}
m\leq m' \Longleftrightarrow m'm^{-1} \text{ is a product of } A_{i,r}, \text{ with } (i,r)\in\I.
\end{equation}

\begin{rem}
Note that Proposition~\ref{prop1+} can be translated as follows: for all dominant monomials $m$, the monomials occurring in the $q$-character of the finite-dimensional irreducible representation $L(m)$ are lower than $m$, for Nakajima's partial order.
\end{rem}

We also recall the \emph{truncated $q$-characters} from \citep{CAQAA}. For $m$ a monomial in $\Mm$, the $q$-character $\chi_q(L(m))$ may contain Laurent monomials in which variables $Y_{i,r}$, with $(i,r)\in\I\setminus \mI$ occur. Let $\chi_q^-(L(m))$ be the Laurent polynomial obtained from $\chi_q(L(m))$ by removing any such Laurent monomial. By definition
\begin{equation}
\qm(L(m)) \in \Z\left[ Y_{i,r}^{\pm 1} \mid (i,r)\in \mI
 \right].
\end{equation}

\begin{ex}
For $\g$ of type $A_2$, one has
\begin{align*}
\qm(L(Y_{1,0})) & = Y_{1,0},\\
\qm(L(Y_{1,-2})) & = Y_{1,-2} + Y_{1,0}^{-1}Y_{2,-1},\\
\qm(L(Y_{1,-4})) & = Y_{1,-4} + Y_{1,-2}^{-1}Y_{2,-3} + Y_{2,-1}^{-1} = \chi_q(L(Y_{1,-4})).
\end{align*}
\end{ex}

\begin{prop}\citep[Proposition 3.10]{ACAA}
The assignment $[L(m)]\mapsto \qm(L(m))$ extends to an injective ring homomorphism 
\begin{equation}\label{chim}
K_0(\CZm) \to \Z\left[ Y_{i,r}^{\pm 1} \mid (i,r)\in \mI \right].
\end{equation} 
\end{prop}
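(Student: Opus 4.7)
My plan has three stages. Set $\Y^- := \Z[Y_{i,r}^{\pm 1} \mid (i,r)\in\mI]$ and let $\pi \colon \Y\to\Y^-$ be the $\Z$-linear truncation sending a Laurent monomial $n$ to $n$ when $n\in\Y^-$ and to $0$ otherwise, so that $\qm(L(m)) = \pi(\chi_q(L(m)))$ for $m\in\Mm$.

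First, the classes $\{[L(m)] : m\in\Mm\}$ form a $\Z$-basis of $K_0(\CZm)$, so the assignment extends uniquely $\Z$-linearly and factors as $\qm = \pi\circ\chi_q\circ\iota$, where $\iota \colon K_0(\CZm)\hookrightarrow K_0(\CZ)$ is the inclusion of subrings (a ring morphism because $\CZm$ is monoidal). Since $\iota$ and $\chi_q$ are ring morphisms, the ring morphism property for $\qm$ reduces to showing, for all $m_1,m_2\in\Mm$, that $\pi$ is multiplicative on the product $\chi_q(L(m_1))\chi_q(L(m_2))$; this in turn amounts to the combinatorial \emph{no-cancellation} statement that whenever $n_1\in\supp\chi_q(L(m_1))$ and $n_2\in\supp\chi_q(L(m_2))$ satisfy $n_1 n_2\in\Y^-$, each $n_i$ individually lies in $\Y^-$.

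The heart of the argument is a structural lemma about a single $q$-character: for $m\in\Mm$ and $n\in\supp\chi_q(L(m))$ with $n\notin\Y^-$, the largest $s>0$ such that some $Y_{j,s}$ occurs in $n$ with nonzero exponent is attained \emph{only} with strictly negative exponents. I plan to prove this lemma by induction on the number of factors in the presentation $n = m\cdot\prod_k A_{i_k,r_k}^{-1}$ afforded by Proposition~\ref{prop1+}: the base case $n=m\in\Y^-$ is vacuous, and each inductive step multiplies by $A_{i,r}^{-1} = Y_{i,r-1}^{-1}Y_{i,r+1}^{-1}\prod_{j\sim i}Y_{j,r}$ with $(i,r)\in\hat{J}$; the key observation is that the unique top-height contribution $Y_{i,r+1}^{-1}$ sits at height $r+1$ with negative exponent, while all positive contributions $Y_{j,r}$ live at the strictly lower height $r$, so the ``only-negatives-at-top'' property is preserved under each of the three subcases (new top height above, equal to, or below the current one). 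Granted the lemma, no-cancellation follows telescopically: if $n_1\notin\Y^-$ with maximal offending height $s_1$, then for $n_1 n_2\in\Y^-$ the monomial $n_2$ must supply a strictly positive exponent at some height-$s_1$ variable, so $n_2\notin\Y^-$ and the lemma forces its own maximal height $s_2>s_1$; cycling back through $n_1$ contradicts the lemma applied to it.

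Finally, injectivity follows from unitriangularity: by Proposition~\ref{prop1+} and (\ref{orderN}), $\chi_q(L(m)) = m + \sum_{m'<m}c_{m,m'}m'$, and since $m\in\Y^-$ the truncation preserves the leading term, giving $\qm(L(m)) = m + (\text{terms in }\Y^-\text{ strictly below }m)$. Any hypothetical relation $\sum a_m\qm(L(m)) = 0$ is then killed by extracting the maximal $m$ with $a_m\neq 0$. The main obstacle is the structural lemma; once established, the remaining steps are routine additive algebra and unitriangularity. The delicate point is precisely the case analysis tracking how ``only-negatives-at-the-top-height'' evolves under an $A_{i,r}^{-1}$ multiplication for each possible relationship between $r+1$ and the current maximum.
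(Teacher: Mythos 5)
This statement is quoted in the paper from \citep[Proposition 3.10]{ACAA} without any proof, so there is no internal argument to compare against; your proposal has to be judged on its own, and it does hold up. Your reduction of multiplicativity to the monomial-level no-cancellation statement is the right move (checking $\pi(n_1n_2)=\pi(n_1)\pi(n_2)$ pair by pair suffices, independently of the signs of the coefficients), and your structural lemma is true and your induction does close: the only delicate step is when the new factor $A_{i,r}^{-1}$ has $r$ equal to the current top height $S$, so that its positive contributions land at height $S$ and may cancel the negative exponents there; but then the same factor deposits $Y_{i,S+1}^{-1}$ at height $S+1$, which nothing in the current monomial (whose support stops at $S$) can cancel, so the top moves up to $S+1$ and carries only negative exponents. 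Together with the observation that a positive exponent at a height $s>0$ can only be created by a factor $A_{j,s}^{-1}$, all three of your subcases preserve the invariant, and the telescoping argument and the unitriangularity argument for injectivity (leading coefficient $1$ by Proposition~\ref{prop1+}, maximal element with respect to (\ref{orderN})) are standard and correct. It is worth noting that your lemma is essentially the \emph{right-negativity} property of Frenkel--Mukhin \citep[Lemma 6.5]{CqC}, which this paper itself invokes later in the proof of Proposition~\ref{propfinale}: every non-highest monomial of $\chi_q(L(Y_{i,r}))$ is right-negative, products of right-negative monomials are right-negative, and $\supp\chi_q(L(m))\subseteq\supp\chi_q(M(m))$, which gives your lemma without a new induction; quoting it would shorten your argument, but your direct combinatorial proof via the presentation $n=m\prod_k A_{i_k,r_k}^{-1}$ is a valid and self-contained alternative, in the same spirit as the original argument of Hernandez--Leclerc.
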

As such, all simple modules in $\CZm$ are identified with their isoclasses through the truncated $q$-character morphism.

		\subsection{Cluster algebra structure}\label{sectCAs}
		
One of the main ingredient we want to use in this work is the \emph{cluster algebra} structure of the Grothendieck ring of the category $\CZm$. 

Consider the cluster algebra $\mathcal{A}:=\mathcal{A}(\pmb u, \Gm)$, with initial seed $(\pmb u, \Gm)$, where 
\begin{itemize}
	\item[•] $\pmb u$ are initial cluster variables indexed by $\mI$, $\pmb u = \left\lbrace u_{i,r} \mid (i,r) \in\mI \right\rbrace$,
	\item[•] $\Gm$ is the semi-infinite quiver with vertex set $\mI$ defined in the previous section.
\end{itemize}
Consider the identification, for all $(i,r)\in\mI$,
\begin{equation}\label{uY}
u_{i,r} \equiv \prod_{\substack{k\geq 0 \\ r+2k\leq 0}}Y_{i,r+2k}.
\end{equation}
This identification makes sense, as these monomials are algebraically independent.

From the \emph{Laurent phenomenon}, we know that all cluster variables of $\A$ are Laurent polynomials in the variables $u_{i,r}$. Thus, via the identification (\ref{uY}), $\mathcal{A}$ is seen as a subring of $\Z\left[ Y_{i,r}^{\pm 1} \mid (i,r)\in \mI \right]$. 

\begin{theo}\citep[Theorem 5.1]{ACAA}\label{theoclusterstr}
The injective ring homomorphism $\qm$ is an isomorphism between the Grothendieck ring of the category $\CZm$ and the cluster algebra $\A$, after identification (\ref{uY}):
\begin{equation}
\qm : K_0(\CZm) \xrightarrow{\sim} \A.
\end{equation}
\end{theo}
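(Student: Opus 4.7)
The plan is to realize the isomorphism inside the common ambient ring $\Z[Y_{i,r}^{\pm 1} \mid (i,r)\in\mI]$, where both sides embed: $K_0(\CZm)$ via $\qm$ (which is injective) and $\A$ via the identification (\ref{uY}) combined with the Laurent phenomenon. Once this is done, the task becomes showing the two images coincide as subrings of $\Z[Y_{i,r}^{\pm 1} \mid (i,r)\in\mI]$.

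First I would verify the initial seed matches at the level of classes. Under (\ref{uY}), each $u_{i,r}$ is the dominant monomial $m^{(i)}_{k,r}$ of a suitable Kirillov–Reshetikhin module $W^{(i)}_{k,r}$, chosen so that $r+2(k-1)$ is the largest nonpositive integer of parity $\xi_i$. I would then argue, using Proposition~\ref{prop1+} and the fact that the monomials $A_{j,s}^{-1}$ with $(j,s)\in\hat J$ occurring in $\chi_q(W^{(i)}_{k,r})$ only involve variables $Y_{j,s'}$ with $s'>0$ after inversion, that $\qm([W^{(i)}_{k,r}])=u_{i,r}$. Thus the initial cluster variables of $\A$ lie in $\qm(K_0(\CZm))$.

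Next, I would promote this to all cluster variables by interpreting mutations at vertices of $G^-$ as classical $T$-system relations. For a vertex $(i,r)\in\mI$, the exchange relation produced by mutating $u_{i,r}$ in the quiver $G^-$ has precisely the shape of the $T$-system identity
\begin{equation*}
[W^{(i)}_{k,r}][W^{(i)}_{k,r+2}] = [W^{(i)}_{k+1,r}][W^{(i)}_{k-1,r+2}] + \prod_{j\sim i}[W^{(j)}_{k,r+1}],
\end{equation*}
once the cluster variables are identified with the corresponding KR-classes; this is the main content that makes mutation preserve "being a KR-class." Iterating this along the columns of $G^-$ yields that every class $[W^{(i)}_{k,r}]$ with $(i,r)\in\mI$ is a cluster variable of $\A$, and in particular the fundamental classes $[V_{i,r}] = [W^{(i)}_{1,r}]$ are all in $\A$. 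Since $K_0(\CZm)$ is generated as a ring by fundamental classes (by the Chari–Pressley classification, every simple in $\CZm$ is a subquotient of a tensor product of fundamentals), this gives $\qm(K_0(\CZm))\subseteq \A$.

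For the reverse inclusion, I would show that $\A$ is generated by KR-classes: the initial cluster variables are KR-classes, and by the exchange/$T$-system step every mutation stays within the ring generated by KR-classes, so the whole cluster algebra lies in the subring of $\Z[Y_{i,r}^{\pm 1} \mid (i,r)\in\mI]$ generated by $\{\qm([W^{(i)}_{k,r}])\}$, which is contained in $\qm(K_0(\CZm))$. Combined with injectivity of $\qm$, this yields the isomorphism. The main obstacle I anticipate is the second step: checking carefully that the combinatorics of the mutations of $G^-$ (which involve ingoing and outgoing arrows of the bipartite semi-infinite quiver) line up exactly with the indexing of the $T$-system, and that the product of neighboring KR-modules $\prod_{j\sim i}[W^{(j)}_{k,r+1}]$ corresponds on the nose to the monomial $\prod_{j\sim i}u_{j,r+1}$ read off from the quiver at $(i,r)$; this is a bookkeeping argument but is where the proof really rests.
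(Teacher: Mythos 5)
Your first inclusion is essentially the paper's (and Hernandez--Leclerc's) argument: the initial variables $u_{i,r}$ are the truncated $q$-characters of the ``top-aligned'' KR modules, the column-reading mutation sequence has exchange relations that are exactly $T$-systems, so all classes $[W^{(i)}_{k,r}]$, and in particular the fundamental classes that generate $K_0(\CZm)$, are cluster variables; this gives $\qm(K_0(\CZm))\subseteq\A$ and is the same route as Proposition~\ref{propyirm} in its classical form. The genuine gap is in your reverse inclusion. You assert that ``by the exchange/$T$-system step every mutation stays within the ring generated by KR-classes,'' but the $T$-system interpretation only applies to the exchange relations along the particular sequence $\mu_{\mathscr{S}}$ from the particular seeds it visits. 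The cluster algebra $\A$ is generated by \emph{all} cluster variables, obtained from arbitrary mutation sequences, whose exchange relations are not $T$-systems and do not produce KR classes. Moreover, even granting that in an exchange relation $z\,z'=M_1+M_2$ the elements $z,M_1,M_2$ lie in the subring generated by KR classes, the new variable $z'=(M_1+M_2)/z$ is a priori only in the fraction field (or, by the Laurent phenomenon, in $\Z[u_{i,r}^{\pm1}]$); nothing in your argument shows it lies in the non-localized subring $\qm(K_0(\CZm))$, which is a polynomial ring on the fundamental classes.

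This is precisely the point where the actual proof needs a different idea: the image of the ($q$- or $(q,t)$-) character homomorphism is characterized as the intersection of the kernels of the screening operators, which are derivations. One then inducts on the length of an \emph{arbitrary} mutation sequence: applying $S_{i}$ to $z\ast z_1=t^{\alpha}M_1+t^{\beta}M_2$ with $z_1,M_1,M_2$ already in the kernels forces $S_i(z)\ast z_1=0$, and freeness of the target module (Lemma~\ref{lemYit}) gives $S_i(z)=0$, so every cluster variable lies in the Grothendieck ring; the truncation is then handled by the comparison of $\eta$, $\eta'$ and $\pi$ (Lemma~\ref{lemeta'}), since the screening operators characterize the untruncated ring. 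Your proposal contains no substitute for this characterization, so the inclusion $\A\subseteq\qm(K_0(\CZm))$ is not established; contrary to your closing remark, the bookkeeping of matching $\mu_{\mathscr{S}}$ with the $T$-system is not where the proof really rests.
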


Moreover, truncated $q$-characters of Kirillov-Reshetikhin modules can be obtained as cluster variables, via the identification of initial seed (\ref{uY}), it is the main result of \citep{ACAA}.

\newpage

	\section{Quantum Grothendieck rings}\label{sectqGRs}
	
We will recall in this section the definition of the quantum Grothendieck ring of the category $\CZ$, introduce that of the category $\CZm$, and study those rings. 

Let $t$ be an indeterminate. The quantum Grothendieck rings of the categories $\CZ$ and $\CZm$ are non-commutative $t$-deformations of the Grothendieck rings.  
		
		\subsection{Quantum torus}	
	
Let $\bf Y_t$ be the $\Z[t^{\pm 1}]$-algebra generated by the variables $Y_{i,r}^{\pm 1}$, for $(i,r)\in \I$, and the $t$-commutations relations:
\begin{equation}\label{relY}
Y_{i,r}\ast Y_{j,s} = t^{\mathcal{N}_{i,j}(r-s)}Y_{j,s}\ast Y_{i,r},
\end{equation}
where $\mathcal{N}_{i,j} : \Z \to \Z$ is the antisymmetrical map:
\begin{equation}\label{eqCN}
\mathcal{N}_{i,j}(m)={\bf C}_{i,j}(m+1) - {\bf C}_{i,j}(m-1), \quad \forall m\in \Z,
\end{equation}
using the notations from Section \ref{sectQCM}. . 

\begin{rem}\label{remtores}
Here we work with the quantum torus of \citep{AAqtC} and \citep{QGR}, which is slightly different from the original quantum torus used to define the quantum Grothendieck ring in \citep{QVtA} and \citep{PSqGR}.
\end{rem}

\begin{ex}\label{exsl2-2}
If we continue Example \ref{exsl2-1}, for $\mathfrak{g}=\mathfrak{sl}_2$, in this case, $\I=(1,2\Z)$, for $r\in\Z$, one has
\begin{equation}\label{relsl2Y}
Y_{1,2r}\ast Y_{1,2s} = t^{2(-1)^{s-r}}Y_{1,2s}\ast Y_{1,2r}, \quad \forall s>r>0.
\end{equation}
\end{ex}	
	
The $\mathbb{Z}(t)$-algebra $\bf Y_t$ is viewed as a quantum torus of infinite rank.

We extend this quantum torus by adjoining a fixed square root $t^{1/2}$ of $t$:
\begin{equation}
\Y_t:=\Z[t^{1/2}]\otimes_{\Z[t]}\bf Y_t.
\end{equation}

Let $\Y_t^-$ be the quantum torus defined exactly the same way, except by only taking as generators the $Y_{i,r}^{\pm 1}$, for $(i,r)\in\mI$.

Let us denote by
\begin{equation}\label{pi}
\pi: \Y_t \to \Y_t^-,
\end{equation}
the projection of $\Y_t$ onto $\Y_t^-$,
\begin{equation}
\pi(Y_{i,r}) = 0 \text{ if } (i,r) \in \I\setminus\mI.
\end{equation}

\begin{rem}\label{remsfield}
Even if $\Y_t^-$ is an infinite rank quantum torus, it can be seen as a limit of finite rank quantum tori. As finite rank quantum tori are of polynomial growth, they are Ore domains (see \citep{RSP}). Moreover, the Ore condition being local (any pair of elements of $\Y_t^-$ belongs to some sufficiently larger finite rank quantum torus), $\Y_t^-$ is an Ore domain. Hence we can consider its skew field of fractions $\mathcal{F}_t$.
\end{rem}

		\subsection{Commutative monomials}\label{sectcomm}

For a family of integers with finitely many non-zero components $(u_{i,r})_{(i,r)\in \hat{I}}$, define the \emph{commutative monomial} $\prod_{(i,r)\in \hat{I}} Y_{i,r}^{u_{i,r}}$ as 
\begin{equation}
\prod_{(i,r)\in \hat{I}} Y_{i,r}^{u_{i,r}} :=t^{\frac{1}{2}\sum_{(i,r)<(j,s)}u_{i,r}u_{j,s}\mathcal{N}_{i,j}(r,s)} \overrightarrow{\bigast}_{(i,r)\in \hat{I}} Y_{i,r}^{u_{i,r}},
\end{equation}
where on the right-hand side an order on $\hat{I}$ is chosen so as to give meaning to the sum, and the product $\ast$ is ordered by it (notice that the result does not depend on the order chosen).

The commutative monomials form a basis of the $\Z[t^{1/2}]$-vector space $\mathcal{Y}_t$. 

The non-commutative product of two commutative monomials $m_1$ and $m_2$ in $\Yt$ is given by:
\begin{equation}
m_1 \ast m_2 = t^{D(m_1,m_2)}m_2 \ast m_1 = t^{\frac{1}{2}D(m_1,m_2)}m_1m_2,
\end{equation}
where $m_1m_2$ denotes the commutative product of the monomials, and
\begin{equation}
D(m_1,m_2)=\sum_{(i,r),(j,s)\in \hat{I}}u_{i,r}(m_1)u_{j,s}(m_2)\mathcal{N}_{i,j}(r,s).
\end{equation}

		\subsection{Quantum Grothendieck ring $K_t(\CZ)$}\label{sectqGR}
	
We define the quantum Grothendieck ring $K_t(\CZ)$ of the category $\CZ$ as in \citep[Section 5.4]{QGR} (see Remark~\ref{remtores} for original references).

For all $(i,r)\in\hat{J}$, let $A_{i,r}$ denote the commutative monomial in $\Y_t$ defined as in (\ref{A}):
\begin{equation*}
A_{i,r} := Y_{i,r-1}Y_{i,r+1}\left( \prod_{j\sim i} Y_{j,r}\right)^{-1}.
\end{equation*}

For all $i\in I$, define $K_{i,t}$ the subring of $\Y_t$ generated by  the
\begin{equation}
Y_{i,r}\left(1+A_{i,r+1}^{-1}\right), \quad Y_{j,s}^{\pm 1} \quad \left( (i,r),(j,s)\in \I, j\neq i\right).
\end{equation}

In \citep{tAOE}, the $K_{i,t}$ are defined as kernels of $t$-deformed screening operators, motivated by the results in \citep{QCRQAA}. Let us detail this, as it will be important in the proof of the main result. For all $i\in I$, define the free $\mathcal{Y}_t$-modules
\begin{equation*}
\mathcal{Y}_{t,i}^{l} := \sum_{r\in\Z\mid (i,r)\in\I}\mathcal{Y}_t\cdot S_{i,r}.
\end{equation*}
Then let $\mathcal{Y}_{t,i}$ be the quotient of $\mathcal{Y}_{t,i}^{l}$ by the left-$\mathcal{Y}_t$-module generated by the elements
\begin{equation*}
Q_{i,r}:= A_{i,r+1}^{-1}S_{i,r+2} - t^2S_{i,r}, \quad \forall (i,r)\in\I.
\end{equation*}
\begin{lem}\label{lemYit}
For all $i\in I$, the module $\mathcal{Y}_{t,i}$ is free.
\end{lem}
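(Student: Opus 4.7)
The plan is to show that $\mathcal{Y}_{t,i}$ is free of rank one as a left $\Yt$-module, with generator the class of any single $S_{i,r_0}$. The key observation is that the defining relation $Q_{i,r} = A_{i,r+1}^{-1} S_{i,r+2} - t^2 S_{i,r}$ reads, in the quotient, as $S_{i,r+2} = t^2 A_{i,r+1}\cdot S_{i,r}$. Since $A_{i,r+1}$ is a Laurent monomial in the $Y$-variables, it is invertible in $\Yt$, so the relation can also be run backwards. Fix any $r_0$ with $(i,r_0)\in\I$. Iterating in both directions yields, for every $k\in\Z$, an identity $\overline{S_{i,r_0+2k}} = c_k \cdot \overline{S_{i,r_0}}$ in $\mathcal{Y}_{t,i}$, with $c_k\in\Yt^{\times}$ an explicit monomial in the $A_{i,\bullet}^{\pm 1}$ times a power of $t$. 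In particular, the $\Yt$-linear map $\psi : \Yt \to \mathcal{Y}_{t,i}$, $a \mapsto \overline{a\cdot S_{i,r_0}}$, is surjective.

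For injectivity I construct an explicit left inverse. Define $\phi: \mathcal{Y}_{t,i}^l \to \Yt$ on the free-module basis by $\phi(S_{i,r_0+2k}) := c_k$ and extend $\Yt$-linearly. A one-line computation gives $\phi(Q_{i,r}) = A_{i,r+1}^{-1}\phi(S_{i,r+2}) - t^2\phi(S_{i,r}) = A_{i,r+1}^{-1}\cdot t^2 A_{i,r+1}\phi(S_{i,r}) - t^2\phi(S_{i,r}) = 0$, so $\phi$ descends to a $\Yt$-linear map $\bar\phi : \mathcal{Y}_{t,i} \to \Yt$ satisfying $\bar\phi\circ\psi = \mathrm{id}_{\Yt}$. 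Hence $\psi$ is injective, and therefore an isomorphism, and $\mathcal{Y}_{t,i}\cong\Yt$ as left $\Yt$-modules.

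The only delicate point is checking that the constants $c_k$ are internally consistent, i.e.~that the up-recursion $c_{k+1} = t^2 A_{i,r_0+2k+1}\,c_k$ and the down-recursion $c_{k-1} = t^{-2}A_{i,r_0+2k-1}^{-1}c_k$ fit together; this is immediate from the invertibility of each $A_{i,r+1}$ in $\Yt$. In essence, the presentation of $\mathcal{Y}_{t,i}$ consists of a single "staircase" of relations, each pairing two consecutive generators $S_{i,r}$ and $S_{i,r+2}$ by an invertible coefficient, so a Gaussian-elimination-style argument collapses the free module of countable rank to a free module of rank one.
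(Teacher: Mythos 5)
Your proof is correct and rests on the same key observation as the paper's: each relation $Q_{i,r}$ ties the two consecutive generators $S_{i,r}$ and $S_{i,r+2}$ together by a coefficient that is invertible in $\mathcal{Y}_t$, so the quotient collapses to a free module of rank one generated by the class of $S_{i,r_0}$. The paper states this as the change of basis from $\{S_{i,r}\}$ to $\{Q_{i,r}\}\cup\{S_{i,r_0}\}$ in $\mathcal{Y}_{t,i}^{l}$ (so one quotients by a submodule spanned by basis elements), while you carry out the same elimination explicitly via the surjection $\psi$ and its retraction $\bar\phi$; the content is identical.
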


\begin{proof}
The elements $Q_{i,r}$ are linearly independent and for all $r_0$ such that $(i,r_0)\in \I$ fixed, the set $\left\lbrace Q_{i,r}, S_{i,r_0} \mid (i,r) \in I \right\rbrace $ forms a basis of $\mathcal{Y}_{t,i}^{l}$.

Hence $\mathcal{Y}_{t,i}$ is a quotient of a free module by a submodule generated by elements of a basis, thus it is free.
\end{proof}

From \citep{tAOE}, for all $i\in I$, there exists a $\Z[t^{\pm 1/2}]$-linear map 
\begin{equation}
S_{i,t} : \mathcal{Y}_t \to \mathcal{Y}_{i,t},
\end{equation}
which is a derivation and such that
\begin{equation}
K_{i,t} = \ker(S_{i,t}).
\end{equation}

Finally, let
\begin{equation}
K_t(\CZ) := \bigcap_{i\in I}K_{i,t}.
\end{equation}

From \citep{tAqC} and \citep{AAqtC} we know that for all dominant monomials $m\in \M$, there exists a unique element $F_{t}(m) \in  K_t(\CZ)$ such that $m$ occurs in $F_{t}(m)$ with multiplicity 1 and no other dominant monomial occurs in $F_{t}(m)$. Thus, all elements of $K_t(\CZ)$ are characterized by the coefficients of their dominant monomials. 

The $F_{t}(m)$ linearly generate $K_t(\CZ)$. 

\begin{rem}\label{remKtgen}
For all $(i,r)\in\mI$,
\begin{equation}\label{FtL}
F_t(Y_{i,r}) = [L(Y_{i,r})]_t.
\end{equation}
The $[L(Y_{i,r})]_t$ generate $K_t(\CZm)$ algebraically.
\end{rem}

		\subsection{The $(q,t)$-characters}

For a dominant monomial $m\in\M$, write it as a commutative monomial in $\Y_t$:
\begin{equation}
m = \prod_{(i,r)\in\I}Y_{i,r}^{n_{i,r}(m)} \quad \in \Y_t.
\end{equation}

Define 
\begin{equation}
[M(m)]_t := t^{\alpha(m)} \overleftarrow{\bigast}_{r\in \Z}F\left( \prod_{i\in I} Y_{i,r}^{n_{i,r}(m)} \right)\quad \in K_t(\CZ),
\end{equation}
where $\alpha(m)\in\frac{1}{2}\Z$ is fixed such that $m$ occurs with multiplicity one in the expansion of $[M(m)]_t$ on the basis of the commutative monomials of $\Y_t$, and the product $\overleftarrow{\bigast}$ is taken with decreasing $r\in\Z$. 

In particular, from (\ref{FtL}), for all $(i,r)\in\I$,
\begin{equation}
[L(Y_{i,r})]_t = [M(Y_{i,r})]_t.
\end{equation}

One has, for all $m\in\M$,
\begin{equation}\label{MMt}
[M(m)]_t  \xrightarrow{t=1} \chi_q(M(m)).
\end{equation}
This result is a direct consequence of the definition of $[M(m)]_t$, as it is satisfied for the fundamental modules $[L(Y_{i,r})]_t=F_t(Y_{i,r})$. Thus $[M(m)]_t$ is called the \emph{$\qt$-character} of the standard module $M(m)$.

As in \citep{QVtA}, we consider the $\Z$-algebra anti-automorphism $\overline{\phantom{A}}$ of $\Y_t$ defined by:
\begin{equation}\label{bar}
\overline{t^{1/2}} = t^{-1/2}, \quad \overline{Y_{i,r}} = Y_{i,r}, \quad \left( (i,r) \in\I \right).
\end{equation}
This map is called the \emph{bar-involution}.

\begin{theo}\citep{QVtA}\label{theoN}
There exists a unique family $\left\lbrace [L(m)]_t \right\rbrace_{m\in\M}$ of elements of $K_t(\CZ)$ such that, for all $m\in\M$,
\begin{itemize}
	\item[•] $\overline{[L(m)]_t}=[L(m)]_t$,
	\item[•] $[L(m)]_t \in [M(m)]_t + \sum_{m'<m}t^{-1}\Z[t^{-1}][M(m')]_t$, where $m'<m$ for Nakajima's partial order (\ref{orderN}).
\end{itemize}
\end{theo}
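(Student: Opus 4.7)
The statement is Kazhdan–Lusztig in shape: build a bar-invariant basis that refines the standard-module basis via a unitriangular change of basis, with off-diagonal entries in $t^{-1}\Z[t^{-1}]$. My plan has three parts: (i) a ``bar-triangularity'' for standard modules, (ii) uniqueness via a standard intersection argument, (iii) existence by induction along Nakajima's order.

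For (i), I would first verify that $\{[M(m)]_t\}_{m\in\M}$ is a $\Z[t^{\pm 1/2}]$-basis of $K_t(\CZ)$: each $[M(m)]_t$ equals the commutative monomial $m$ (by choice of $\alpha(m)$) plus a $\Z[t^{\pm 1/2}]$-combination of strictly lower commutative monomials in Nakajima's order, as follows from Proposition~\ref{prop1+} applied to each fundamental factor $F_t(Y_{i,r})=[L(Y_{i,r})]_t$, together with an analysis of the $t$-commutation defects of the ordered product. Next, the bar-involution preserves $K_t(\CZ)$, since each generator $Y_{i,r}(1+A_{i,r+1}^{-1})$ of $K_{i,t}$ is manifestly bar-invariant. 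Hence I can expand
\begin{equation*}
\overline{[M(m)]_t}\;=\;\sum_{m'\leq m}R_{m',m}(t)\,[M(m')]_t,\qquad R_{m',m}(t)\in\Z[t^{\pm 1/2}],\quad R_{m,m}=1,
\end{equation*}
where the triangularity and $R_{m,m}=1$ come from the monomial-level expansion above and from the normalization of $\alpha(m)$.

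For (ii), let $\{[L(m)]_t\}$ and $\{[L'(m)]_t\}$ be two families satisfying the conditions. Their difference $D(m):=[L(m)]_t-[L'(m)]_t$ is bar-invariant and lies in $\bigoplus_{m'<m}t^{-1}\Z[t^{-1}]\,[M(m')]_t$. Proceeding by induction on $m$ (Nakajima's order is locally finite below any $m$, so the induction makes sense), assume $D(m')=0$ for all $m'<m$. Expanding $D(m)=\sum_{m'<m}P_{m'}(t)[M(m')]_t$ with $P_{m'}\in t^{-1}\Z[t^{-1}]$ and looking at a maximal $m'$ with $P_{m'}\neq 0$, bar-invariance combined with step (i) forces $P_{m'}=\overline{P_{m'}}$, hence $P_{m'}\in t^{-1}\Z[t^{-1}]\cap t\,\Z[t]=\{0\}$, a contradiction.

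For (iii), I proceed by induction on $m$. For a minimal $m$, set $[L(m)]_t:=[M(m)]_t$; bar-invariance is immediate since $R_{m,m}=1$ and no lower terms occur. For the inductive step, start from $X_0:=[M(m)]_t$ and successively add corrections $c_{m'}(t)[M(m')]_t$, choosing the $m'<m$ from the maximal ones downward. At each stage, the discrepancy $X-\overline{X}$ on the coefficient of $[M(m')]_t$ is some $a\in\Z[t^{\pm 1}]$ with $\overline{a}=-a$, and the standard Kazhdan–Lusztig lemma produces a unique $c_{m'}\in t^{-1}\Z[t^{-1}]$ with $c_{m'}-\overline{c_{m'}}=a$; adding $c_{m'}[M(m')]_t$ kills the discrepancy at $m'$ without disturbing the (already corrected) higher coefficients. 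The process terminates because only finitely many $m'<m$ appear.

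The main obstacle is step (i): showing that $\overline{[M(m)]_t}$ actually lies in the $\Z[t^{\pm 1/2}]$-lattice spanned by $\{[M(m')]_t\}_{m'\leq m}$, and not merely in $K_t(\CZ)$. This requires matching the behaviour of the bar on the specific ordered product $\overleftarrow{\bigast}_rF_t(\cdots)$ with Nakajima's order, which in turn rests on controlling how reversing the product creates $t$-powers and lower-order Laurent monomials through the $t$-commutation relations~\eqref{relY} and Frenkel–Reshetikhin positivity. Steps (ii) and (iii) are then essentially formal once (i) is in hand.
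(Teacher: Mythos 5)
The paper itself does not prove Theorem~\ref{theoN}: it is quoted from Nakajima \citep{QVtA}, so the only meaningful comparison is with the standard proof given there, and your proposal is precisely that Kazhdan--Lusztig/Lusztig-lemma argument. Steps (ii) and (iii) are correct as written: the antisymmetry $\overline{a}=-a$ of the top uncorrected coefficient is automatic from maximality of $m'$ in the support, adding $c_{m'}(t)[M(m')]_t$ only perturbs coefficients at monomials $\leq m'$, and in the uniqueness step the outer induction on $m$ is superfluous (the maximal-support argument already kills $D(m)$ for each $m$ separately). One point you use twice without justification and should state explicitly is that the set of \emph{dominant} monomials $m'\leq m$ is finite; this is what makes the triangular expansions finite and the correction process terminate.

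Concerning the step (i) you flag as the main obstacle: it is indeed where the content lies, but your proposed route (controlling the $t$-powers and lower terms created by reversing the ordered product $\overleftarrow{\bigast}_r F_t(\cdot)$) is not the efficient one and is not needed. Since the bar-involution fixes every commutative monomial, $\overline{[M(m)]_t}$ has the same Laurent-monomial support as $[M(m)]_t$, only with bar-conjugated coefficients; by Proposition~\ref{prop1+} applied to the factors $F_t(\cdot)$, all these monomials are $\leq m$ for the order (\ref{orderN}) and $m$ itself occurs with coefficient $1$ (by the normalization $\alpha(m)$). Combine this with two facts already recorded in Section~\ref{sectqGR}: $K_t(\CZ)$ is bar-stable (the generators of each $K_{i,t}$, read as sums of commutative monomials, are bar-invariant and the bar is an anti-automorphism), and an element of $K_t(\CZ)$ is determined by its dominant monomials, the $F_t(m')$ generating $K_t(\CZ)$ linearly. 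Peeling off dominant monomials from the top and using the finiteness above, any element of $K_t(\CZ)$ whose monomials are all $\leq m$ is a finite combination of the $F_t(m')$, $m'\leq m$, hence of the $[M(m')]_t$, $m'\leq m$, by unitriangularity of the latter over the former. This gives $\overline{[M(m)]_t}=[M(m)]_t+\sum_{m'<m}R_{m',m}(t)[M(m')]_t$, which is exactly your (i); with that substitution your proof is complete and coincides with the argument of \citep{QVtA}.
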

The following Theorem extends (\ref{MMt}), but more importantly gives an algorithm, similar to the Kazhdan-Lusztig algorithm, to compute the $\qt$-characters (and thus the $q$-characters) of the simple modules.

\begin{theo}\citep[Corollary 3.6]{QVtA}\label{theoNpos}
The evaluation at $t=1$ of the $\qt$-characters recovers the $q$-characters. For all $m\in\M$,
\begin{align*}
[L(m)]_t & \xrightarrow{t=1} \chi_q(L(m)) \quad \in \Y.
\end{align*}
Moreover, the coefficients of the expansion of $[L(m)]_t$ as a linear combination of Laurent monomials in the variables $(Y_{i,r})_{(i,r)\in\I}$ belong to $\N[t^{\pm 1}]$.
\end{theo}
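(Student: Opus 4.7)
The plan is to prove the two assertions separately. The first assertion — that $[L(m)]_t \xrightarrow{t=1} \chi_q(L(m))$ — should follow from a uniqueness argument combining the Kazhdan–Lusztig-type characterization of $[L(m)]_t$ from Theorem \ref{theoN} with the specialization (\ref{MMt}) of standard modules. The second assertion — the positivity in $\N[t^{\pm 1}]$ of the coefficients of Laurent monomials — is the substantive geometric content and will be the main obstacle.

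For the first assertion, I would start from the triangular expansion
\begin{equation*}
[L(m)]_t = [M(m)]_t + \sum_{m'<m} c_{m',m}(t)[M(m')]_t, \qquad c_{m',m}(t) \in t^{-1}\Z[t^{-1}],
\end{equation*}
and specialize at $t=1$. Using $[M(m')]_t \xrightarrow{t=1} \chi_q(M(m'))$ from (\ref{MMt}), one obtains a unitriangular expansion of $[L(m)]_{t=1}$ on the basis $\{\chi_q(M(m'))\}_{m'\leq m}$, with coefficients $c_{m',m}(1)\in\Z$. On the other hand, $\chi_q(L(m))$ admits the inverse Kazhdan–Lusztig decomposition (\ref{inverseKL}) on the same basis. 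Because $\chi_q$ is injective and the classes of standard modules are linearly independent in $K_0(\CZ)$, such a unitriangular decomposition is unique, so it is enough to verify that $[L(m)]_{t=1}$ actually belongs to $K_0(\CZ)$, i.e.\ is a $\Z$-linear combination of classes of simple modules. This last point follows from the fact that, after specialization at $t=1$, the quantum Grothendieck ring $K_t(\CZ)$ recovers $K_0(\CZ)$ via the $q$-character morphism, so matching coefficients forces $[L(m)]_{t=1} = \chi_q(L(m))$.

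For the second assertion, I would appeal to Nakajima's geometric realization of $K_t(\CZ)$ via perverse sheaves on graded quiver varieties $\mathfrak{M}^\bullet(V,W)$. Under this realization, $[M(m)]_t$ corresponds to the class of the direct image of the constant sheaf under a semi-small resolution of a Nakajima quiver variety, $[L(m)]_t$ corresponds to the IC sheaf of a stratum labelled by $m$, and the coefficient of a Laurent monomial $\prod Y_{i,r}^{u_{i,r}}$ in $[L(m)]_t$ is the Poincaré polynomial (in the variable $t^{\pm 1}$) of a stalk of that IC sheaf at a $T$-fixed point indexed by the exponents $u_{i,r}$. Positivity of such Poincaré polynomials is exactly the content of the Decomposition Theorem of Beilinson–Bernstein–Deligne–Gabber applied to these resolutions, combined with the parity vanishing property that Nakajima establishes for quiver varieties of $ADE$ type. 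The bar-invariance from Theorem \ref{theoN} matches the Verdier self-duality of IC sheaves, which is what pins down $[L(m)]_t$ on the geometric side.

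The main obstacle is clearly the second assertion: no purely algebraic proof of positivity is currently known in this generality, and the argument must go through the geometry of quiver varieties and the Decomposition Theorem. The first assertion, by contrast, is a formal consequence of uniqueness in a unitriangular characterization once one knows that standard $(q,t)$-characters specialize correctly, and this is already guaranteed by (\ref{MMt}).
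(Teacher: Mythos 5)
This statement is not proved in the paper at all: it is quoted from Nakajima \citep{QVtA} (the paper even remarks that the positivity is only known in $ADE$ types), so the relevant comparison is with Nakajima's geometric proof. Your sketch of the second assertion is essentially an outline of that proof: the coefficients of $[L(m)]_t$ on Laurent monomials are Poincar\'e polynomials of stalks of IC sheaves on graded/cyclic quiver varieties, and positivity comes from the decomposition theorem together with Nakajima's parity/purity results. Two small caveats: the relevant morphism is not literally a semi-small resolution in the graded setting, and the coefficients are stalk cohomologies along strata of the graded affine quotient indexed by the monomial, rather than at torus-fixed points; but as an outline of the known argument this part is fine (it is a summary of \citep{QVtA}, not an independent proof).

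The genuine gap is in the first assertion, which you declare to be formal. Specializing the triangular expansion of Theorem~\ref{theoN} at $t=1$ and using (\ref{MMt}) gives
\begin{equation*}
[L(m)]_{t=1} \;=\; \chi_q(M(m)) + \sum_{m'<m} c_{m',m}(1)\,\chi_q(M(m')),
\end{equation*}
and the screening-operator description of $K_t(\CZ)$ does show that $[L(m)]_{t=1}$ lies in the image of $\chi_q$, hence is a $\Z$-combination $\sum_{m''} a_{m''}\chi_q(L(m''))$. But these two facts only force $a_m=1$: the difference $[L(m)]_{t=1}-\chi_q(L(m))$ can a priori be any integer combination of $\chi_q(L(m''))$ with $m''<m$, and each such term is itself unitriangular on the standard basis, so ``uniqueness of the unitriangular decomposition'' does not kill the lower terms. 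The bar-invariance, which is exactly what makes the characterization in Theorem~\ref{theoN} rigid, becomes vacuous at $t=1$, so no Kazhdan--Lusztig-type uniqueness survives specialization. What is actually needed is the identification of the specialized transition coefficients with composition multiplicities, $[M(m):L(m')]=Z_{m',m}(1)$, i.e.\ Nakajima's geometric multiplicity theorem obtained from the decomposition theorem in his earlier work; this is precisely the content of \citep[Corollary 3.6]{QVtA}. In other words, the first assertion rests on the same geometric input as the second, and your proposed purely formal derivation of it would fail.
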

Note that the positivity result of this Theorem has only been proven for ADE types as yet.

		\subsection{Truncated $(q,t)$-characters and quantum Grothendieck ring $K_t(\CZm)$}
		
As in Section~\ref{sectqcharac}, one can define truncated versions of the $(q,t)$-characters. 

For all dominant monomials $m$ in $\Mm$, let $[L(m)]_t^-$ be the Laurent polynomial obtained from $[L(m)]_t$ by removing any term in which a variable $Y_{i,r}$, with $(i,r)\in\I\setminus\mI$ occurs:
\begin{equation}
[L(m)]_t^- = \pi\left([L(m)]_t\right)\quad \in \Y_t^-,
\end{equation}
where $\pi$ is the projection defined in (\ref{pi}).

Define $K_t(\CZm)$ as the $\Z[t^{1/2}]$-submodule of $\Y_t^-$ generated by the truncated $\qt$-characters $[L(m)]_t^-$ of the simple finite-dimensional modules $L(m)$ in the category $\CZm$.

\begin{lem}\label{lemCZm}
The quantum Grothendieck ring $K_t(\CZm)$ is actually a subalgebra of $\Y_t^-$. Moreover, it is algebraically generated by the truncated $\qt$-characters of the fundamental modules:
\begin{equation}
K_t(\CZm) = \left\langle [L(Y_{i,r})]_t^- \mid (i,r) \in \mI \right\rangle.
\end{equation}
\end{lem}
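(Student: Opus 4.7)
First observe that for each $m \in \Mm$ the dominant monomial $m$ survives truncation and appears in $[L(m)]_t^-$ with coefficient $1$; since these leading monomials are distinct across $m \in \Mm$, the family $\{[L(m)]_t^-\}_{m \in \Mm}$ is $\Z[t^{\pm 1/2}]$-linearly independent, and therefore a basis of $K_t(\CZm)$.

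For the subalgebra claim, fix $m, m' \in \Mm$ and expand the product in $K_t(\CZ)$ along the basis $\{[L(m'')]_t\}_{m'' \in \M}$,
$$
[L(m)]_t \ast [L(m')]_t \;=\; \sum_{m'' \in \M} c_{m,m'}^{m''}(t)\,[L(m'')]_t \qquad \bigl(c_{m,m'}^{m''}(t) \in \Z[t^{\pm 1/2}]\bigr).
$$
Applying the $\Z[t^{\pm 1/2}]$-linear projection $\pi$ of (\ref{pi}) and combining the two inputs
\begin{itemize}
\item[(A)] $\pi([L(m'')]_t) = 0$ for every $m'' \in \M \setminus \Mm$;
\item[(B)] $\pi([L(m)]_t \ast [L(m')]_t) = [L(m)]_t^- \ast [L(m')]_t^-$;
\end{itemize}
one obtains
$$
[L(m)]_t^- \ast [L(m')]_t^- \;=\; \sum_{m'' \in \Mm} c_{m,m'}^{m''}(t)\,[L(m'')]_t^- \;\in\; K_t(\CZm).
$$
Each of (A), (B) is a quantum lift of a classical fact. (A) is the support statement that every Laurent monomial of $\chi_q(L(m''))$ contains at least one bad variable when $m'' \notin \Mm$; it is implicit in the classical construction underlying \citep[Prop.~3.10]{ACAA}. (B), after splitting $[L(m)]_t = [L(m)]_t^- + X_m^b$ and $[L(m')]_t = [L(m')]_t^- + X_{m'}^b$, reduces to $\pi(X_m^b \ast X_{m'}^b) = 0$, the two cross-terms being annihilated by $\pi$ because any good-times-bad Laurent monomial is again bad; its classical counterpart follows from the multiplicativity of the truncation on $\CZm$-products stated in \citep[Prop.~3.10]{ACAA}. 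The quantum versions then follow from their classical counterparts by the positivity of Theorem~\ref{theoNpos}: each relevant coefficient lies in $\N[t^{\pm 1}]$ and its $t=1$ specialization is the corresponding classical coefficient, so classical vanishing forces the quantum coefficient to be identically zero.

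The generation statement then follows by induction on the Nakajima partial order (\ref{orderN}) restricted to $\Mm$. Set $B := \langle [L(Y_{i,r})]_t^- \mid (i,r) \in \mI \rangle$ and assume $[L(m')]_t^- \in B$ for every $m' \in \Mm$ with $m' < m$. Form the ordered product
$$
P_m \;:=\; t^{\gamma(m)}\,\overleftarrow{\bigast}_{r \in \Z}\;\overleftarrow{\bigast}_{i \in I}\,\bigl([L(Y_{i,r})]_t^-\bigr)^{n_{i,r}(m)}
$$
with the same ordering convention as in the definition of $[M(m)]_t$ and $\gamma(m) \in \tfrac{1}{2}\Z$ chosen so that $m$ appears in $P_m$ with coefficient $1$. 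By the subalgebra property just established, $P_m \in K_t(\CZm)$; since its leading monomial for Nakajima's order is $m$, its expansion in the basis from the first paragraph reads
$$
P_m \;=\; [L(m)]_t^- \,+\, \sum_{\substack{m' < m \\ m' \in \Mm}} d_{m'}(t)\,[L(m')]_t^-,
$$
and the inductive hypothesis yields $[L(m)]_t^- \in B$. The reverse inclusion $B \subseteq K_t(\CZm)$ is immediate.

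The main obstacle is step (A), namely upgrading the classical support statement $\pi(\chi_q(L(m'')))=0$ for $m'' \in \M \setminus \Mm$ to the quantum setting. This upgrade relies crucially on the $\N[t^{\pm 1}]$-positivity of the Laurent-monomial coefficients of $[L(m'')]_t$ (Theorem~\ref{theoNpos}), which is available only in $ADE$ types and underlies the simply-laced restriction of the paper.
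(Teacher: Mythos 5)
Your input (A) is false, and this is a genuine gap in the subalgebra step. It is not true that $\pi([L(m'')]_t)=0$ for every $m''\in\M\setminus\Mm$, nor is the corresponding classical support statement contained in \citep[Proposition 3.10]{ACAA}. For a counterexample take $\g=\mathfrak{sl}_2$ and $m''=Y_{1,0}^2Y_{1,2}\in\M\setminus\Mm$: the strings $\{0,2\}$ and $\{0\}$ are in general position, so $L(m'')\cong W_{2,0}^{(1)}\otimes W_{1,0}^{(1)}$ and $\chi_q(L(m''))=(Y_{1,0}Y_{1,2}+Y_{1,0}Y_{1,4}^{-1}+Y_{1,2}^{-1}Y_{1,4}^{-1})(Y_{1,0}+Y_{1,2}^{-1})$ contains the purely ``good'' monomial $Y_{1,0}=Y_{1,0}Y_{1,2}\cdot Y_{1,2}^{-1}$; by the very positivity you invoke (Theorem~\ref{theoNpos}) this monomial survives in $[L(m'')]_t$, so $\pi([L(m'')]_t)\neq 0$. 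Consequently you cannot obtain the displayed product formula by killing the terms with $m''\notin\Mm$ through (A); what is actually needed is that the structure constants themselves vanish, $c_{m,m'}^{m''}(t)=0$ for $m''\in\M\setminus\Mm$ when $m,m'\in\Mm$ (classically this is exactly the monoidality of $\CZm$). A repair inside your framework: the classical input behind \citep[Proposition 3.10]{ACAA} is that for $m\in\Mm$ the variables $Y_{j,s}$ with $s>0$ occur in the monomials of $\chi_q(L(m))$ only with \emph{negative} exponents; lifting this by Theorem~\ref{theoNpos}, every monomial of $[L(m)]_t\ast[L(m')]_t$ has bad variables only with negative exponents, hence every dominant monomial of the product lies in $\Mm$, and a downward triangularity argument in the expansion forces $c_{m,m'}^{m''}(t)=0$ for $m''\notin\Mm$. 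The same support fact is what makes your (B) correct (your justification of (B) is essentially sound). Note that the paper only needs, and only proves, the vanishing $\pi([L(Y_{i,r})]_t)=0$ for \emph{fundamental} modules with $(i,r)\in\I\setminus\mI$, where it does hold because all monomials of $\chi_q(L(Y_{i,r}))$ involve only spectral parameters in $[r,r+h]$ with $r>0$.

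Apart from this, your route differs from the paper's in the generation step: the paper projects the algebraic generation of $K_t(\CZ)$ by the fundamental $\qt$-characters (Remark~\ref{remKtgen}) through $\pi$, whereas you run an induction on Nakajima's order using ordered products of truncated fundamental characters; that part is workable (modulo the standard fact that only finitely many dominant monomials lie below a given one, which you should cite to justify the induction), but it does not compensate for the failure of (A), on which your proof of closedness under products—and hence also the membership $P_m\in K_t(\CZm)$ used in the induction—rests.
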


\begin{proof}
For every dominant monomials $m_1,m_2 \in\M$, one can write:
\begin{equation}\label{Lm1Lm2}
[L(m_1)]_t\ast [L(m_2)]_t = \sum_{m\in\M}c_{m1,m2}^m(t^{1/2})[L(m)]_t.
\end{equation}
Hence the image of (\ref{Lm1Lm2}) by the projection $\pi$ of (\ref{pi}) is:
\begin{equation*}
[L(m_1)]_t^-\ast [L(m_2)]_t^- = \sum_{m\in\M}c_{m1,m2}^m(t^{1/2})[L(m)]_t^-.
\end{equation*}
Thus $K_t(\CZm)$ is stable by products.

By definition the truncated $\qt$-characters of the fundamental modules $L(Y_{i,r})$, for $(i,r)\in \mI$ belong to $K_t(\CZm)$.

Reciprocally, the $\qt$-characters of the fundamental modules $L(Y_{i,r})$, for all $(i,r)\in \I$, algebraically generate the quantum Grothendieck ring $K_t(\CZ)$ (see remark~\ref{remKtgen}). Hence the truncated $\qt$-characters $[L(Y_{i,r})]_t^-$, for all $(i,r)\in \I$, algebraically generate $K_t(\CZm)$. 

From Proposition~\ref{prop1+} and Theorem~\ref{theoN}, for all dominant monomials $m\in\M$, the $\qt$-character of the simple representation $L(m)$ is of the form
\begin{equation*}
[L(m)]_t = m\left( 1 +  \sum_p M_p \right),
\end{equation*}
where $M_p$ is a monomial in the variables $(A_{i,r}^{-1})_{i,r)\in\hat{J}}$, with coefficients in $\Z[t^{\pm 1}]$. Thus, 
\begin{equation*}
\pi\left([L(Y_{i,r})]_t\right) = 0, \text{ if } (i,r) \in \I\setminus \mI.
\end{equation*}
Hence, $K_t(\CZm)$ is algebraically generated by the $[L(Y_{i,r})]_t^-$, with $(i,r)\in \mI$.
\end{proof}

$K_t(\CZm)$ is a $t$-deformed version of the Grothendieck ring of the category $\CZm$, in the sense that the evaluation $[L(m)]_t^-  \xrightarrow{t=1} \chi_q^-(L(m))$ extends to a ring homomorphism
\begin{equation}
K_t(\CZm)  \xrightarrow{t=1} K_0(\CZm),
\end{equation}
where $ K_0(\CZm)$ is identified with its image under the truncated $q$-character (\ref{chim}), which is an injective map.

		\subsection{Quantum $T$-systems}\label{sectTsys}

The $\qt$-characters of the Kirillov-Reshetikhin modules satisfy some algebraic relations called \emph{quantum $T$-systems}. Those are $t$-deformed versions of the $T$-system relations, which are satisfied by the $q$-characters of the KR-modules \citep{FRSLM,QVtA,KRC}.

\begin{prop}\citep{tAqCKR}\citep[Proposition 5.6]{QGR}
For all $(i,r)\in\I$ and $k\in\Z_{>0}$, the following relation holds in $K_t(\CZ)$:
\begin{equation}\label{Tsys}
[W_{k,r}^{(i)}]_t \ast [W_{k,r+2}^{(i)}]_t = t^{\alpha(i,k)} [W_{k-1,r+2}^{(i)}]_t \ast [W_{k+1,r}^{(i)}]_t + t^{\gamma(i,k)} \bigast_{j\sim i} [W_{k,r+1}^{(j)}]_t,
\end{equation}
where 
\begin{equation}\label{ag}
\alpha(i,k) = -1 + \frac{1}{2}\left( \tilde{C}_{ii}(2k-1) + \tilde{C}_{ii}(2k+1)\right), \quad \gamma(i,k) = \alpha(i,k) +1.
\end{equation}
\end{prop}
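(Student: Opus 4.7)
The plan is to derive the quantum $T$-system from its classical counterpart (the $q$-character $T$-system, known for $ADE$ types by Nakajima) through a careful bookkeeping of $t$-exponents. The starting observation is that every KR-module is \emph{special}: its $q$-character has a unique dominant monomial, with multiplicity $1$. Thus $[W_{k,r}^{(i)}]_t = F_t(m_{k,r}^{(i)})$ is the unique element of $K_t(\CZ)$ whose only dominant monomial in $\Yt$ is $m_{k,r}^{(i)}$, with coefficient $1$. Both sides of \eqref{Tsys} live in the subalgebra $K_t(\CZ) \subset \Yt$, and any element of $K_t(\CZ)$ is determined by its dominant monomial coefficients in $\Yt$, so it suffices to match these coefficients.

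By Proposition~\ref{prop1+}, every monomial in $[W_{k,s}^{(i)}]_t$ has the form $m_{k,s}^{(i)} \prod A_{j,u}^{-\nu_{j,u}}$. Consequently, a dominant commutative monomial of the LHS $[W_{k,r}^{(i)}]_t \ast [W_{k,r+2}^{(i)}]_t$ can only arise from a pair $(\alpha,\beta)$ whose $A^{-1}$-factors cancel. A direct check that $M_2/M_1 = \prod_{l=0}^{k-1} A_{i,r+2l+1}^{-1}$, combined with the explicit combinatorial description of the monomial content of KR-modules, leaves exactly two dominant candidates,
\begin{equation*}
M_1 := m_{k,r}^{(i)} m_{k,r+2}^{(i)} = m_{k-1,r+2}^{(i)} m_{k+1,r}^{(i)} \qquad\text{and}\qquad M_2 := \prod_{j\sim i} m_{k,r+1}^{(j)},
\end{equation*}
which are precisely the dominant monomials of the two summands on the RHS. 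That the LHS-coefficient of $M_2$ is, moreover, a single power of $t^{1/2}$ can be argued either by showing that only one pair $(\alpha,\beta)$ produces $M_2$ (an extremality claim for KR-monomials), or by invoking the positivity of Theorem~\ref{theoNpos} together with the classical $T$-system to rule out cancellations.

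It remains to match $t$-exponents. The coefficient of $M_1$ in the LHS is $t^{D_1/2}$ with $D_1 = D(m_{k,r}^{(i)}, m_{k,r+2}^{(i)})$, and the coefficient of $M_1$ in the first summand of the RHS is $t^{\alpha(i,k) + D_2/2}$ with $D_2 = D(m_{k-1,r+2}^{(i)}, m_{k+1,r}^{(i)})$, so $\alpha(i,k) = (D_1 - D_2)/2$; a parallel calculation comparing the coefficient of $M_2$ against the ordered product $\bigast_{j\sim i} m_{k,r+1}^{(j)}$ yields $\gamma(i,k)$. Expanding these differences as double sums of values $\mathcal{N}_{i,j}(r-s)$, rewriting $\mathcal{N}_{i,j}$ via \eqref{eqCN} as a difference of ${\bf C}_{i,j}$-values, and applying the recursion \eqref{Clem} produces a telescoping cascade: after using the symmetry ${\bf C}_{i,j}(m) = {\bf C}_{i,j}(-m)$ to fold paired contributions, only a few boundary terms survive and consolidate into the closed forms \eqref{ag}. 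The main obstacle, in my view, is precisely this final step -- tracking the cross-contributions from the neighbors $j \sim i$ and identifying the correct telescoping pattern -- though once the pattern is spotted, the computation is mechanical.
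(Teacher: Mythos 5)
The paper does not actually prove this Proposition: it is imported from Nakajima \citep{tAqCKR} and Hernandez--Leclerc \citep[Proposition 5.6]{QGR}, so there is no internal proof to compare yours against. Judged on its own terms, your skeleton is the natural one: both sides lie in $K_t(\CZ)$, elements of $K_t(\CZ)$ are determined by the coefficients of their dominant monomials, and the powers of $t$ are then fixed by comparing dominant-monomial coefficients (equivalently, by bar-invariance). Indeed, the $t$-exponent bookkeeping you outline is essentially the computation the paper itself carries out for $\alpha$ and $\gamma$ inside the proof of Proposition~\ref{propyirm}, using that $m_{k-1,r+2}^{(i)}$ and $m_{k+1,r}^{(i)}$ commute (Remark~\ref{remtcomTsys}); that part is sound in principle, though you leave the telescoping unexecuted, and the sign conventions for $\mathcal{N}_{i,j}$ are exactly where such a "mechanical" step can silently go wrong.

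The genuine gap is the combinatorial core, which you assert rather than prove: that $M_1$ and $M_2$ are the \emph{only} dominant monomials of $[W_{k,r}^{(i)}]_t \ast [W_{k,r+2}^{(i)}]_t$, each with coefficient a single power of $t^{1/2}$. Proposition~\ref{prop1+} alone does not give this, and your phrase "a pair whose $A^{-1}$-factors cancel" is off: both factors contribute only negative powers of the (algebraically independent) $A_{j,u}$, so $M_2$ arises precisely from pairs whose $A^{-1}$-factors do \emph{not} cancel, and the issue is to show that no other such pair produces a dominant monomial and that only one pair produces $M_2$. Neither of your fallbacks closes this. An "explicit combinatorial description of the monomial content of KR-modules" sharp enough for this purpose is not available in types $D$ and $E$. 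And "positivity plus the classical $T$-system" is also insufficient: positivity (Theorem~\ref{theoNpos}) reduces the claim to the statement that $M_2$ occurs with multiplicity one in the commutative product $\chi_q(W_{k,r}^{(i)})\chi_q(W_{k,r+2}^{(i)})$, but the classical $T$-system merely rewrites this product as $\chi_q(W_{k-1,r+2}^{(i)})\chi_q(W_{k+1,r}^{(i)}) + \prod_{j\sim i}\chi_q(W_{k,r+1}^{(j)})$, and one still needs to know that the first summand contains no dominant monomial other than its highest one (i.e.\ that this product of $q$-characters is special). That is precisely the hard input in Nakajima's and Hernandez's proofs of the $T$-systems and their $t$-deformations, so at this point your argument either imports the result it is trying to prove or must cite it. In short: the frame is right and matches how these exponents are handled elsewhere in the paper, but the step you defer is the actual theorem.
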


\begin{rem}\label{remtcomTsys}
First of all, one notices that the dominant monomials of $W_{k-1,r+2}^{(i)}$ and $W_{k+1,r}^{(i)}$ commute:
\begin{equation}
m_{k-1,r+2}^{(i)}\ast m_{k+1,r}^{(i)} =  m_{k+1,r}^{(i)} \ast m_{k-1,r+2}^{(i)}.
\end{equation}
Moreover, the tensor product of the KR-modules $W_{k-1,r+2}^{(i)}\otimes W_{k+1,r}^{(i)}$ is irreducible (this result is proved in \citep{BGAT} and also by explicit computation of its $\qt$-character in \citep{tAqCKR}). Thus their respective $\qt$-characters $t$-commute (see \citep[Corollary 5.5]{QGR}). As their dominant monomials commute, these $\qt$-characters in fact commute and their product can be written as a commutative product, as in Section~\ref{sectcomm}.

By the same arguments, for $j\sim i$, the $\qt$-characters $[W_{k,r+1}^{(j)}]_t$ commute so the order of the factors in $\bigast_{j\sim i}$ in (\ref{Tsys}) does not matter.
\end{rem}

By taking the image of (\ref{Tsys}) through the projection $\pi$ of (\ref{pi}), one obtains the following relation in $K_t(\CZm)$.
For all $(i,r)\in\I$ and $k\in\Z_{>0}$,
\begin{equation}\label{Tsysm}
[W_{k,r}^{(i)}]_t^- \ast [W_{k,r+2}^{(i)}]_t^- = t^{\alpha(i,k)} [W_{k-1,r+2}^{(i)}]_t^- \cdot [W_{k+1,r}^{(i)}]_t^- + t^{\gamma(i,k)} \prod_{j\sim i} [W_{k,r+1}^{(j)}]_t^-,
\end{equation}
where $\alpha(i,k)$ and $\gamma(i,k)$ are defined in (\ref{ag}). 

Note that in (\ref{Tsysm}), the products appearing on the left-hand side are commutative products, which are well-defined from Remark~\ref{remtcomTsys}. Hence the change of notations since (\ref{Tsys}).

	\section{Quantum cluster algebra structure}\label{sectqCAstructure}
We define in this section the quantum cluster algebra structure built within the quantum torus $\Y_t^-$.

		\subsection{A compatible pair}

For all $(i,r)\in\mI$, the variable $u_{i,r}$, written as in (\ref{uY}), can be seen as commutative monomial in $\Y_t^-$. Define
\begin{equation*}
U_{i,r} := \prod_{\substack{k\geq 0 \\ r+2k\leq 0}}Y_{i,r+2k}\quad \in\Y_t^-.
\end{equation*}
They satisfy the following $t$-commutation relations. For all $((i,r),(j,s))\in(\mI)^2$,
\begin{equation}\label{tcommU}
U_{i,r}\ast U_{j,s} = t^{L\left((i,r),(j,s)\right)}U_{j,s}\ast U_{i,r},
\end{equation}
where
\begin{equation}
L\left((i,r),(j,s)\right) = \sum_{\substack{k\geq 0\\ r+2k\leq 0}} \sum_{\substack{l\geq 0\\ s+2l\leq 0}}\mathcal{N}_{ij}(s+2l-r-2k).
\end{equation}

Let $B_-$ be the $\mI\times\mI$-matrix encoding the quiver $G^-$, for all $\left((i,r),(j,s)\right)\in(\mI)^2$:
\begin{equation}
B_-\left((i,r),(j,s)\right)= |\{ \text{ arrows } (i,r) \to (j,s) \text{ in } G^- \}| - |\{ \text{ arrows } (j,s) \to (i,r) \text{ in } G^- \}|.
\end{equation}
Let $L$ be the $\mI\times\mI$ skew-symmetric matrix
\begin{equation}
L := \left(L\left((i,r),(j,s)\right)\right)_{((i,r),(j,s))\in(\mI)^2}.
\end{equation}

The pair of $\mI\times\mI$-matrices $(L,B_-)$ forms a \emph{compatible pair}, in the sense of quantum cluster algebras.

More precisely, we prove the following.

\begin{prop}\label{propcompCZm}
For all $\left((i,r),(j,s)\right)\in(\mI)^2$,
\begin{equation}\label{Bm}
\left(B_-^T L\right)\left((i,r),(j,s)\right) = \left\lbrace\begin{array}{ll}
-2 & \text{ if } (i,r) = (j,s) \\
0 & \text{ otherwise.}
\end{array}\right.
\end{equation}
\end{prop}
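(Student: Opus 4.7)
The plan is to reduce the statement to the identity \eqref{Clem} on the entries of the quantum Cartan matrix by a direct calculation.

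First, I would unpack the left-hand side. Since the non-zero entries of $B_-$ record the arrows of $G^-$, the description of $\Gamma$ in Section~\ref{sectinftyquiver} says that $B_-((k,u),(i,r))\neq 0$ only when $(k,u)\in\{(i,r\pm2),(k,r\pm1)\mid k\sim i\}$, with signs depending on the orientation. So
\[
(B_-^T L)\bigl((i,r),(j,s)\bigr)
= L\bigl((i,r-2),(j,s)\bigr)-L\bigl((i,r+2),(j,s)\bigr)+\sum_{k\sim i}\Bigl[L\bigl((k,r+1),(j,s)\bigr)-L\bigl((k,r-1),(j,s)\bigr)\Bigr],
\]
with the convention that a term is omitted if its row index is not in $\mI$. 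To avoid case-splitting, I would extend $L$ to a function $\tilde L$ on $\I\times\I$ by the same defining sum; when the row index $(k,u)$ satisfies $u>0$, the index set $\{a\mid a\geq u,\,a\leq 0\}$ is empty and $\tilde L$ vanishes automatically. Therefore, the four omitted boundary cases can be re-inserted as zero terms, and the displayed equality holds verbatim with $L$ replaced by $\tilde L$.

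Next, I would compute each difference by comparing the summation ranges. For any $c\leq c'$ with $c\equiv c'\equiv\xi_i\pmod 2$, $S_i(c)\supset S_i(c')$ and $\tilde L((i,c),(j,s))-\tilde L((i,c'),(j,s))=\sum_{a\in S_i(c)\setminus S_i(c')}\sum_{b\in S_j(s)}\mathcal{N}_{ij}(b-a)$. Applying this to the two relevant differences collapses the whole right-hand side to a single sum over $b\in S_j(s)$, producing
\[
(B_-^T L)\bigl((i,r),(j,s)\bigr)
= \sum_{b\in S_j(s)}\Bigl[\mathcal{N}_{ij}(b-r+2)+\mathcal{N}_{ij}(b-r)-\sum_{k\sim i}\mathcal{N}_{kj}(b-r+1)\Bigr].
\]

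At this point I would substitute $\mathcal{N}_{ij}(m)=\mathbf{C}_{ij}(m+1)-\mathbf{C}_{ij}(m-1)$ from \eqref{eqCN}. The bracket becomes a telescoping combination of $\mathbf{C}_{ij}$ values at $b-r\pm 1,b-r\pm 3$ and $\mathbf{C}_{kj}$ at $b-r,b-r+2$. Using symmetry $\mathbf{C}_{ij}=\mathbf{C}_{ji}$ (from $\tilde C$ being the inverse of the symmetric matrix $C$), I can apply identity \eqref{Clem} twice, at $m=b-r+2$ and at $m=b-r$. Adding the two instances cancels every $\mathbf{C}$-term and yields the clean expression $2\delta_{ij}\bigl[\delta_{b,r-2}-\delta_{b,r}\bigr]$.

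Finally, I would plug this back into the sum over $b\in S_j(s)$. The sum collapses to $2\delta_{ij}\bigl[\mathbf{1}_{r-2\in S_j(s)}-\mathbf{1}_{r\in S_j(s)}\bigr]$, and when $i=j$ the parity condition is automatic, so the indicator only sees the inequalities $r-2\geq s$ and $r\geq s$, whose difference equals $-\delta_{r,s}$. This gives the claimed value $-2\delta_{(i,r),(j,s)}$. The only delicate point is the boundary bookkeeping for vertices near the top of $G^-$; by phrasing everything through the extension $\tilde L$, which vanishes on phantom rows, the entire argument becomes uniform, and the identity \eqref{Clem} does all the real work.
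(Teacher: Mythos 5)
Your proposal is correct and takes essentially the same route as the paper: expand $(B_-^T L)\left((i,r),(j,s)\right)$ over the arrows at $(i,r)$, telescope the sums defining $L$, and conclude from the identity \eqref{Clem} (together with the symmetry of $\mathbf{C}_{ij}$). The only difference is organizational: by extending $L$ by zero to phantom rows $(k,u)$ with $u>0$ you handle the boundary vertices uniformly and apply \eqref{Clem} pointwise inside the sum over $b\in S_j(s)$, whereas the paper treats the cases $r\leq -2$, $r=-1$, $r=0$ separately after fully telescoping both sums.
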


\begin{rem}
In \citep{qCA}, by definition a pair of $J\times J$-matrices $(\Lambda,B)$ forms a compatible pair if $^TBL$ is a diagonal matrix with positive integer coefficients. But as explained in \citep{LEA2}, quantum cluster algebras can be built exactly the same way given as data a pair $(\Lambda,B)$ such that $^TBL$ is a diagonal matrix with integer coefficients with constant signs.
\end{rem}

\begin{proof}
Fix $\left((i,r),(j,s)\right)\in(\mI)^2$, there are different cases to consider.
\begin{itemize}
	\item[•] If $r\leq -2$, one has:
\begin{align*}
\left(B_-^T L\right)\left((i,r),(j,s)\right) & = L\left((i,r-2),(j,s)\right) - L\left((i,r+2),(j,s)\right) \\
& + \sum_{k\sim i}\left(L\left((k,r+1),(j,s)\right) - L\left((k,r-1),(j,s)\right) \right).
\end{align*}
One has 
\begin{align*}
L\left((i,r-2),(j,s)\right) - L\left((i,r+2),(j,s)\right) & =  - {\bf C}_{ij}(s-r-1) - {\bf C}_{ij}(s-r+1) \\ 
& +{\bf C}_{ij}(-r + 3-\xi_j) + {\bf C}_{ij}(-r +1 -\xi_j),
\end{align*}
where $\xi: I \to \{ 0,1\}$ is the height function on the Dynkin diagram of $\g$ fixed in Section~\ref{sectheightfun}.

On the other hand, for all $k\sim i$, one has
\begin{align*}
L\left((k,r+1),(j,s)\right) - L\left((k,r-1),(j,s)\right) & = {\bf C}_{kj}(s-r) -  {\bf C}_{kj}(-r+2-\xi_j).
\end{align*}
Thus, with the reformulation (\ref{Clem}) of Lemma~\ref{lemCtildeC},
\begin{equation}\label{Bm1}
\left(B_-^T L\right)\left((i,r),(j,s)\right) = \left\lbrace \begin{array}{ll}
-2\delta_{i,j} & \text{ if } s=r,\\
0 & \text{ otherwise.}
\end{array}\right.
\end{equation}
	\item[•] If $r=-1$, one has:
\begin{align*}
\left(B_-^T L\right)\left((i,-1),(j,s)\right) & = L\left((i,-3),(j,s)\right) \\
& + \sum_{k\sim i}\left(L\left((k,0),(j,s)\right) - L\left((k,-2),(j,s)\right) \right).
\end{align*}
However,
\begin{align*}
L\left((i,-3),(j,s)\right) & = \sum_{\substack{l\geq 0\\ s+2l\leq 0}}\left( \mathcal{N}_{ij}(s+2l+3) + \mathcal{N}_{ij}(s+2l+1)\right),\\
& = {\bf C}_{ij}(4-\xi_j) + {\bf C}_{ij}(2-\xi_j) - {\bf C}_{ij}(s) - {\bf C}_{ij}(s+2).
\end{align*}
And, for all $k\sim i$,
\begin{align*}
L\left((k,0),(j,s)\right) - L\left((k,-2),(j,s)\right) & = {\bf C}_{ij}(s+1) - {\bf C}_{ij}(3-\xi_j).
\end{align*}
Thus, with relation (\ref{Clem}),
\begin{equation}\label{Bm2}
\left(B_-^T L\right)\left((i,-1),(j,s)\right) = \left\lbrace \begin{array}{ll}
-2\delta_{i,j} & \text{ if } s=-1,\\
0 & \text{ otherwise.}
\end{array}\right.
\end{equation}
	\item[•] If $r=0$, one has
	\begin{align*}
\left(B_-^T L\right)\left((i,0),(j,s)\right) & = L\left((i,-2),(j,s)\right)  -\sum_{k\sim i}  L\left((k,-1),(j,s)\right) .
\end{align*}
However,
\begin{align*}
L\left((i,-2),(j,s)\right)  = {\bf C}_{ij}(3-\xi_j) + {\bf C}_{ij}(1-\xi_j) - {\bf C}_{ij}(s+1) - {\bf C}_{ij}(s-1).
\end{align*}
And, for all $k\sim i$,
\begin{align*}
 L\left((k,-4),(j,s)\right)  = -{\bf C}_{ij}(s) + {\bf C}_{ij}(2-\xi_j).
\end{align*}
Thus, with relation (\ref{Clem}),
\begin{equation}\label{Bm3}
\left(B_-^T L\right)\left((i,0),(j,s)\right) = \left\lbrace \begin{array}{ll}
-2\delta_{i,j} & \text{ if } s=0,\\
0 & \text{ otherwise.}
\end{array}\right.
\end{equation}
\end{itemize}
The combination of the results (\ref{Bm1}),(\ref{Bm2}) and (\ref{Bm3}) gives the general expression (\ref{Bm}). 
\end{proof}

		\subsection{The quantum cluster algebra $\A_t$}

\begin{defi}
Let $T$ be the based quantum torus with generators $\{~u_{i,r}\mid (i,r)\in\mI\}$ satisfying the quasi-commutation relations (\ref{tcommU}):
\begin{equation*}
u_{i,r}\ast u_{j,s} = t^{L\left((i,r),(j,s)\right)}u_{j,s}\ast u_{i,r}.
\end{equation*}
Let $\F$ be the skew-field of fractions of $T$.
\end{defi}

As $(L,B_-)$ forms a compatible pair, it defines a quantum seed in $\F$. Let $\mathcal{S}$ be the mutation equivalence class of the quantum seed $(L,B_-)$.

\begin{defi}
Let $\A_t$ be the quantum cluster algebra defined by the quantum seed $\mathcal{S}$, as in \citep{qCA}.
\end{defi}

By definition, $\A_t$ is a $\Z[t^{\pm 1/2}]$-subalgebra of $\F$. However, by the quantum Laurent phenomenon, $\A_t$ is actually a $\Z[t^{\pm 1/2}]$-subalgebra of the quantum torus $T$.

\begin{lem}\label{lemTY}
The map
\begin{equation}\label{eta}
\eta : \begin{array}{rcl}
T &  \longrightarrow & \Y_t^-\\
u_{i,r} & \longmapsto & U_{i,r},
\end{array}
\end{equation}
where the variables $U_{i,r}$ are defined in (\ref{uY}), is an isomorphism of quantum tori.
\end{lem}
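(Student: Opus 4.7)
The plan is to verify separately that $\eta$ is well-defined as a $\Z[t^{\pm 1/2}]$-algebra homomorphism, that it is surjective, and that it is injective. The isomorphism property is essentially built into the construction: the $t$-commutation relations on the two sides are identical by design.

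First, I would check well-definedness. The $U_{i,r}$ are commutative monomials in $\Y_t^-$, hence invertible, and relation (\ref{tcommU}) (established just before the lemma) says exactly that
\begin{equation*}
U_{i,r}\ast U_{j,s} = t^{L\left((i,r),(j,s)\right)}U_{j,s}\ast U_{i,r},
\end{equation*}
which is the defining relation of the based quantum torus $T$. By the universal property of $T$, sending $u_{i,r}\mapsto U_{i,r}$ (and $u_{i,r}^{-1}\mapsto U_{i,r}^{-1}$) extends uniquely to a $\Z[t^{\pm 1/2}]$-algebra homomorphism $\eta : T \to \Y_t^-$.

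Next, I would show surjectivity by recovering each $Y_{i,r}^{\pm 1}$ from the $U_{j,s}$. For fixed $i$, the vertices of the $i$-th column of $G^-$ are $(i,r_0),(i,r_0-2),(i,r_0-4),\ldots$ where $r_0\in\{0,-1\}$ is determined by $\xi_i$. From the definition $U_{i,r}=\prod_{k\geq 0,\,r+2k\leq 0}Y_{i,r+2k}$, we have $U_{i,r_0}=Y_{i,r_0}$, and for $r<r_0$ with $(i,r)\in\mI$ the identity of commutative monomials $U_{i,r}=Y_{i,r}\cdot U_{i,r+2}$ gives, in the quantum torus, $Y_{i,r}=t^{c_{i,r}}U_{i,r}\ast U_{i,r+2}^{-1}$ for some $c_{i,r}\in\tfrac{1}{2}\Z$. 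Hence every generator of $\Y_t^-$ lies in the image of $\eta$.

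Finally, for injectivity, I would compare bases. The commutative monomials $\prod_{(i,r)\in\mI}u_{i,r}^{a_{i,r}}$ (with finitely many nonzero $a_{i,r}\in\Z$) form a $\Z[t^{\pm 1/2}]$-basis of $T$, and similarly the commutative monomials in the $Y_{i,r}$ form a basis of $\Y_t^-$. The image of such a basis monomial of $T$ is a commutative monomial in the $U_{i,r}$, which, via the column-wise change of variables above, becomes a commutative monomial in the $Y_{j,s}$; this change of exponents is given, column by column, by an upper unitriangular integer matrix, hence is a bijection on multi-exponents. Thus $\eta$ sends a $\Z[t^{\pm 1/2}]$-basis of $T$ to a $\Z[t^{\pm 1/2}]$-basis of $\Y_t^-$, and combined with surjectivity this yields the isomorphism. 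The only technical care needed is in tracking the powers of $t^{1/2}$ when passing between the $\ast$-product and the commutative-monomial notation, but these $t$-factors are absorbed by $\Z[t^{\pm 1/2}]$-linearity and do not affect the bijective correspondence of bases.
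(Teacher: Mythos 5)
Your proposal is correct and follows essentially the same route as the paper: well-definedness comes from the fact that the $U_{i,r}$ satisfy exactly the defining relations (\ref{tcommU}) of $T$, and your surjectivity computation ($Y_{i,r_0}=U_{i,r_0}$ at the top of a column, $Y_{i,r}=t^{c}U_{i,r}\ast U_{i,r+2}^{-1}$ below) is precisely the paper's explicit inverse $\eta^{-1}:Y_{i,r}\mapsto u_{i,r}u_{i,r+2}^{-1}$ (resp.\ $u_{i,r}$). Your extra injectivity step via the unitriangular change of exponents between commutative-monomial bases is just a slightly more elaborate packaging of the same invertible column-wise change of variables.
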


\begin{proof}
First of all, this map is well-defined because the variables $u_{i,r}$ $t$-commute exactly as the variables $U_{i,r}$, by definition of the matrix $L$ (\ref{tcommU}).

Secondly, this map is invertible, with inverse:
\begin{equation*}
\eta^{-1}:\begin{array}{rcl}
\Y_t^- & \longrightarrow & T\\
Y_{i,r} & \longmapsto  & \left\lbrace  \begin{array}{ll}
	u_{i,r}u_{i,r+2}^{-1} & \text{ if } r+2\leq 0,\\
	u_{i,r} & \text{otherwise,}
\end{array}\right.
\end{array}.
\end{equation*}
\end{proof}

With this lemma, we know that the quantum cluster algebra $\A_t$ belongs to the quantum torus $\Y_t^-$.
The following result is the main result of this paper, it extends Theorem~\ref{theoclusterstr} to the quantum setting.

\begin{theo}\label{theo1}
The image of the quantum cluster algebra $\A_t$ by the injective ring morphism $\eta$ is the quantum Grothendieck ring of the category $\CZm$,
\begin{equation}
\eta\restriction_{\A_t} : \A_t \xrightarrow{\sim} K_t(\CZm).
\end{equation}
Moreover, the truncated $\qt$-characters of the Kirillov-Reshetikhin modules which are in $\CZm$ are obtained as quantum cluster variables.
\end{theo}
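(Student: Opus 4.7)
The strategy is to upgrade the classical cluster algebra isomorphism of Theorem~\ref{theoclusterstr} to the quantum setting by using the quantum $T$-systems (\ref{Tsysm}) as the quantum counterparts of the classical mutation relations. The first step is to analyze the initial seed. Using Lemma~\ref{lemTY}, $\eta$ sends the initial cluster variable $u_{i,r}$ to the commutative monomial $U_{i,r}\in\Y_t^-$. I would verify that, for each $(i,r)\in\mI$, the element $U_{i,r}$ coincides with the truncated $\qt$-character $[W_{k_{i,r},r}^{(i)}]_t^-$ of the Kirillov-Reshetikhin module whose dominant monomial exactly reaches the top of its column in $G^-$. The justification is that every non-dominant monomial in the $\qt$-character of such a KR module must introduce a variable $Y_{j,s}$ with $(j,s)\in\I\setminus\mI$, hence is killed by the truncation $\pi$.

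Next, I would mimic the infinite mutation sequence of \citep{ACAA}, mutating the quantum seed $(L,B_-)$ column by column and from top to bottom within each column. The inductive claim is that after mutating the vertex corresponding to $(i,r)$, the new quantum cluster variable equals the truncated $\qt$-character of the next Kirillov-Reshetikhin module in that column. The crux of the proof is that the quantum exchange relation at the mutated vertex, whose $t$-exponents are dictated by the compatible pair $(L,B_-)$ established in Proposition~\ref{propcompCZm}, agrees exactly with the quantum $T$-system (\ref{Tsysm}), including the precise $t$-powers $\alpha(i,k)$ and $\gamma(i,k)$ from (\ref{ag}). This matching is established by a direct calculation with the entries of $L$, reduced by means of the inverse quantum Cartan identities from Lemma~\ref{lemCtildeC}.

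Once this is done, the theorem follows in two pieces. By the inductive mutation argument, the truncated $\qt$-characters of all Kirillov-Reshetikhin modules in $\CZm$ (and in particular of all fundamental modules $[L(Y_{i,r})]_t^-$) appear as quantum cluster variables of $\A_t$, which establishes the second assertion. The inclusion $K_t(\CZm)\subseteq\eta(\A_t)$ then follows from Lemma~\ref{lemCZm}, since the fundamental $\qt$-characters algebraically generate $K_t(\CZm)$. For the reverse inclusion $\eta(\A_t)\subseteq K_t(\CZm)$, all quantum cluster variables produced by the mutation process lie in $K_t(\CZm)$ by construction, and the quantum Laurent phenomenon together with the subalgebra structure of $K_t(\CZm)$ inside $\Y_t^-$ propagates the property to every quantum cluster variable. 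The principal obstacle is the matching of $t$-exponents in the second step: verifying that the $t$-powers produced by the quantum cluster mutation formula, assembled from the skew-symmetric matrix $L$, coincide with those prescribed by the quantum $T$-system is a technically delicate but essentially unavoidable computation carried out via Lemma~\ref{lemCtildeC}.
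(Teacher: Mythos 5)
Your first step — matching the quantum exchange relations along the column-by-column mutation sequence with the truncated quantum $T$-systems (\ref{Tsysm}) — is essentially the paper's Proposition~\ref{propyirm}, and it is fine; the only cosmetic difference is that the paper pins down the $t$-exponents $\alpha,\beta$ by requiring bar-invariance of the products of dominant monomials rather than by unwinding the entries of $L$ directly, but both routes rest on Lemma~\ref{lemCtildeC}. This part gives the second assertion of the theorem and, via Lemma~\ref{lemCZm}, the inclusion $K_t(\CZm)\subseteq\eta(\A_t)$, exactly as in the paper.

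The genuine gap is in your reverse inclusion $\eta(\A_t)\subseteq K_t(\CZm)$. Your mutation argument only produces the cluster variables lying along the particular sequence $\mu_{\mathscr S}$, whereas $\A_t$ is generated by the cluster variables of \emph{all} seeds, obtained by arbitrary mutation sequences; nothing so far says those are truncated $\qt$-characters. Moreover, the step ``quantum Laurent phenomenon together with the subalgebra structure of $K_t(\CZm)$ propagates the property'' does not work: the Laurent phenomenon only places a cluster variable in the quantum torus $T\cong\Y_t^-$, not in $K_t(\CZm)$, and $K_t(\CZm)$ is not closed under the division by a cluster variable that the exchange relation $Z\ast Z_1=t^\alpha M_1+t^\beta M_2$ requires (it does not even contain $U_{i,r}^{-1}$). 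The paper closes this gap with a genuinely different mechanism: it characterizes the \emph{non-truncated} ring $K_t(\CZ)$ as $\bigcap_{i}\ker S_{i,t}$ for the deformed screening operators, which are derivations with values in free $\Y_t$-modules (Lemma~\ref{lemYit}), and it introduces a second morphism $\eta':T\to\mathcal F_t$ sending $u_{i,r}$ to the full $\qt$-character $[W^{(i)}_{k_{i,r},r}]_t$ (Lemma~\ref{lemeta'}, whose well-definedness itself needs the $t$-commutativity of these KR characters, deduced from Proposition~\ref{propyirm} and shift invariance, together with the commutative diagram $\pi\circ\eta'=\eta$ on $\A_t$). Then, by induction on the length of an \emph{arbitrary} mutation sequence, applying $S_{i,t}$ to the image of the exchange relation and using the derivation property, the induction hypothesis, freeness of the target module and $\eta'(Z_1)\neq 0$, one gets $S_{i,t}(\eta'(Z))=0$ for all $i$, hence $\eta'(Z)\in K_t(\CZ)$, and projecting by $\pi$ gives $\eta(Z)\in K_t(\CZm)$. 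Without some argument of this type (or an equivalent one covering all cluster variables, not just those along $\mu_{\mathscr S}$), your proof of the isomorphism is incomplete.
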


The proof of this Theorem will be developed in the following section. It is mainly based on Proposition~\ref{propyirm}.

\begin{rem}\label{remQin}
In \citep[Theorem 8.4.3]{QIN}, for certain subcategories of $\C$ generated by a finite number of fundamental modules, Qin proved that there existed an isomorphism between the quantum Grothendieck ring of the category and a quantum cluster algebra, which identifies classes of Kirillov-Reshetikhin modules to cluster variables. It is our understanding that this identification coincides with the truncated $\qt$-characters in our work. Here, the isomorphism is given explicitly, and we obtain directly the truncated $\qt$-characters.
\end{rem}

%\newpage

	\section{Quantum cluster algebras and quantum Grothendieck ring}\label{sectqCAqGR}
	
We prove in this section that the quantum Grothendieck ring of the category $\CZm$ is isomorphic to the quantum cluster algebra we have just defined.

		\subsection{A note on the bar-involution}\label{sectbar}

The bar-involution $\overline{\phantom{A}}$, as defined in (\ref{bar}), is a $\Z$-algebra anti-automorphism of the quantum torus $\Y_t$. The commutative monomials are invariant under this involution.

On the other hand, the quantum cluster algebra $\A_t$ is also equipped with a $\Z$-linear bar-involution morphism $\overline{\phantom{A}}$ on its quantum torus $T$ (see \citep[Section 6]{qCA}), which satisfies
\begin{align*}
\overline{t^{1/2}} & = t^{-1/2}, \\
\overline{u_{i,r}} & = u_{i,r}.
\end{align*}

As noted in \citep[Section 7.1]{LEA2}, these definitions are compatible. In our case, they define exactly the same involution on $\Y_t^-$; the following diagram is commutative:
\begin{equation}
\xymatrix{
\Y_t^-  \ar[r]^{\overline{\phantom{A}}} \ar[d]_{\eta^{-1}} & \Y_t^- \\
T \ar[r]_{\overline{\phantom{A}}} & T\ar[u]^{\eta}
}.
\end{equation}

From \citep[Remark 6.4]{qCA}, all cluster variables are invariant under the bar-involution. Thus, the images of the quantum cluster variables in $\A_t$ are bar-invariant elements of $\Y_t^-$.

We will use the terminology "commutative products", as in Section~\ref{sectcomm} for bar-invariant elements of the quantum torus $T$.

		\subsection{A sequence of vertices}\label{sectsequ}

In \citep{ACAA} Hernandez and Leclerc exhibited a particular sequence of mutations in the cluster algebra $\mathcal{A}(\pmb u, \Gm)$ (see Section~\ref{sectCAs}) in order to obtain the truncated $q$-characters of all the KR-modules, up to a shift of spectral parameter.

The key idea we used was that at each step of this sequence, the exchange relation was a $T$-system equation.

We will recall this sequence of mutations and show that, if applied to the quantum cluster algebra $\A_t$, the quantum exchange relations at each step are in fact quantum $T$-systems relations, as in (\ref{sectTsys}). 

Recall the height function $\xi : I\to \{0,1\}$ fixed on the Dynkin diagram of $\g$ in Section~\ref{sectheightfun}.

First, fix an order on the columns of $G^-$:
\begin{equation}\label{ordercol}
i_1,i_2,\ldots,i_n,
\end{equation}
such that if $k\leq l$ then $\xi_{i_k} \leq \xi_{i_l}$ (select first the vertices $i$ such that $\xi_i=0$ then the others).

Then, the sequence $\mathscr{S}$ is defined by reading each column, from top to bottom, in this order.

\begin{ex}
We follow Examples \ref{exsl4} and \ref{exsl42}, $\g=\mathfrak{sl}_4$ and 
\begin{equation*}
\mI =  (1,2\Z_{\leq 0}) \cup  (2,2\Z_{\leq 0}-1) \cup (3,2\Z_{\leq 0}).
\end{equation*}
We fix the following order on the columns: $1, \smallskip 3, \smallskip 3$. Then the sequence $\mathscr{S}$ is
\begin{equation}
\mathscr{S} = (1,0),(1,-2),(1,-4),\ldots,(3,0),(3,-2),(3,-4),\ldots,(2,-1),(2,-3),(2,-5),\ldots .
\end{equation}
\end{ex}

		\subsection{Truncated $\qt$-characters as quantum cluster variables}\label{secttruncq}

As in \citep{ACAA}, let $\mu_\mathscr{S}$ be the sequence of quantum cluster mutations in $\A_t$ indexed by the sequence of vertices $\mathscr{S}$.

For all $m\geq 1$, let $u_{i,r}^{(m)}$ be the quantum cluster variable obtained at vertex $(i,r)$ after applying $m$ times the sequence of mutations $\mu_\mathscr{S}$ to the quantum cluster algebra $\A_t$ with initial seed $\{~u_{i,r}\mid (i,r)\in \mI\}$. By the quantum Laurent phenomenon, the $u_{i,r}^{(m)}$ belong to the quantum torus $T$. Let
\begin{equation}
w_{i,r}^{(m)} := \eta(u_{i,r}^{(m)}) \quad \in \Y_t^-,
\end{equation}
where $\eta: T\to \Y_t^-$ is the isomorphism defined in Lemma~\ref{lemTY}.

The following result is an extension of Theorem 3.1 from \citep{ACAA} to the quantum setting.

\begin{prop}\label{propyirm}
For all $(i,r)\in\mI$ and $m\geq 0$, 
\begin{equation}\label{eqprop}
w_{i,r}^{(m)}  = [W_{k_{i,r},r-2m}^{(i)}]_t^-,
\end{equation}
where $k_{i,r}$ is defined in (\ref{kir}).

In particular, if $2m\geq h$, this truncated $\qt$-character is equal to its $\qt$-character and
\begin{equation}\label{eqprop2}
w_{i,r}^{(m)}  = [W_{k_{i,r},r-2m}^{(i)}]_t.
\end{equation}
\end{prop}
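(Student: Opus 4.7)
The plan is to proceed by a double induction: an outer induction on the number $m$ of complete passes of $\mu_{\mathscr{S}}$, and within a single pass, an inner induction on the position of the vertex $(i,r)$ in the sequence $\mathscr{S}$. The base case $m=0$ amounts to $w_{i,r}^{(0)} = \eta(u_{i,r}) = U_{i,r} = [W_{k_{i,r},r}^{(i)}]_t^-$: by construction $U_{i,r}$ is the dominant commutative monomial of the relevant KR-module, and every other monomial in its full $(q,t)$-character is obtained from it by multiplying by factors $A_{j,s}^{-1}$ with $(j,s)\in\hat{J}$. A direct inspection (carried out classically in \citep{ACAA}) shows that each such product introduces at least one variable $Y_{j,s}^{\pm 1}$ with $s>0$ and hence is annihilated by the truncation map $\pi$. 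Since the $q$-character and the $(q,t)$-character share the same monomial support, this argument transfers to the quantum setting unchanged.

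For the inductive step, suppose the claim has been established through all mutations preceding the mutation of $(i,r)$ in pass $m+1$. The Berenstein-Zelevinsky quantum exchange relation reads
\begin{equation*}
u_{i,r}^{(m+1)}\ast u_{i,r}^{(m)} = M_+ + M_-,
\end{equation*}
where $M_\pm$ are specific monomials in the cluster variables attached to the vertices adjacent to $(i,r)$ in the evolved quiver, carrying $t$-exponents dictated by the current $L$-matrix. By the inductive hypothesis and the quiver-evolution analysis of \citep{ACAA}, which is purely combinatorial, the images $\eta(M_\pm)$ involve exactly the truncated $(q,t)$-characters of the KR-modules that appear on the right-hand side of the quantum $T$-system~(\ref{Tsysm}) relating $[W_{k_{i,r},r-2m}^{(i)}]_t^-$ and $[W_{k_{i,r},r-2m-2}^{(i)}]_t^-$.

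The remaining task is to show that the $t$-exponents match. Both $M_\pm$ (by the Berenstein-Zelevinsky construction of quantum mutations) and the two summands on the right-hand side of the quantum $T$-system (by virtue of being commutative products in the sense of Section~\ref{sectcomm}) are bar-invariant elements of $\Y_t^-$. Since a commutative monomial in this quantum torus is uniquely characterized by its commutative part together with bar-invariance, and since the commutative parts coincide term-by-term by the classical matching of \citep[Theorem~3.1]{ACAA}, the quantum expressions must agree identically, $t$-exponents included. Hence $u_{i,r}^{(m+1)}$ maps under $\eta$ to $[W_{k_{i,r},r-2m-2}^{(i)}]_t^-$, closing the induction. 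The second statement~(\ref{eqprop2}) follows from the fact that, for $2m\geq h$, the monomial support of $[W_{k_{i,r},r-2m}^{(i)}]_t$ is already confined to $\mI$ (by the classical Coxeter-number bound on KR-character supports), so the truncation $\pi$ acts as the identity.

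The main obstacle is the bookkeeping required to verify, at each step, that the mutated quiver has precisely the arrow structure matching the quantum $T$-system. This is the quantum counterpart of the central combinatorial lemma of \citep{ACAA}; the classical template survives, but one must check its compatibility with the $t$-commutation matrix $L$ and its evolution under mutation. Once the combinatorial skeleton is in place, bar-invariance together with the uniqueness of bar-invariant normalizations of commutative monomials makes the matching of $t$-exponents automatic, which is the elegant feature of the quantum argument.
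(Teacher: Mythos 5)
Your overall strategy (induction on the number of passes of $\mu_{\mathscr{S}}$, base case $\eta(u_{i,r})=U_{i,r}=[W^{(i)}_{k_{i,r},r}]_t^-$, identification of each quantum exchange relation with a truncated quantum $T$-system, and the Coxeter-number bound for the statement (\ref{eqprop2})) is exactly the paper's. However, there is a genuine gap at the one step that carries the quantum content: the matching of the $t$-exponents. You assert that the two summands on the right-hand side of the quantum $T$-system (\ref{Tsysm}) are bar-invariant ``by virtue of being commutative products,'' and that uniqueness of bar-invariant normalizations then forces the exponents to agree with those produced by the Berenstein--Zelevinsky exchange relation. This is not correct as stated: the summands of (\ref{Tsysm}) are $t^{\alpha(i,k)}$ and $t^{\gamma(i,k)}$ times commutative products, and the exponents $\alpha(i,k),\gamma(i,k)$ of (\ref{ag}) are in general nonzero (e.g.\ $\alpha(i,1)=-1$ for $\mathfrak{sl}_2$), so these summands are \emph{not} bar-invariant, and they are not defined by any bar-invariance normalization — they come from the explicit form of the quantum $T$-system proved in \citep{tAqCKR}. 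Consequently the identity $\alpha=\alpha(i,k)$, $\beta=\gamma(i,k)$ is not automatic; if it failed, your induction would only yield $\eta(u_{i,r}^{(m+1)})\ast[W^{(i)}_{k,r'}]_t^-$ equal to a $t$-rescaled combination of the $T$-system summands, which does not identify $\eta(u_{i,r}^{(m+1)})$ with $[W^{(i)}_{k,r'-2}]_t^-$.

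What the bar-invariance principle does give you (and what the paper uses) is a \emph{formula} for the exchange exponents $\alpha,\beta$: they are pinned down by requiring bar-invariance of the cluster monomials divided by $u^{(m)}_{i,r}$, which by the induction hypothesis reduces to a computation with the dominant monomials $m^{(i)}_{k\pm1,\cdot}$, $m^{(j)}_{k,\cdot}$ and the pairing $\mathcal{N}_{i,j}$. One must then verify, using the identities for the inverse quantum Cartan matrix (Lemma~\ref{lemCtildeC}, in the form (\ref{Clem})), that these values coincide with $\alpha(i,k)$ and $\gamma(i,k)$ of (\ref{ag}). This explicit verification is the missing step in your argument; without it the induction does not close. (Two minor further points: the cancellation of $[W^{(i)}_{k,r'}]_t^-$ at the end needs its invertibility in the skew field of fractions of $\Y_t^-$, as in Remark~\ref{remsfield}; and your base-case sketch is fine but is really the statement, already in \citep{ACAA}, that the truncated characters of these particular KR modules reduce to their dominant monomials.)
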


\begin{rem}
The sequence of vertices $\mathscr{S}$ is infinite. However, in order to compute one fixed truncated $\qt$-character, one only has to compute a finite number of mutations in the infinite sequence $\mu_\mathscr{S}$. In \citep[Section 7.2]{LEA2}, the exact finite sequence needed to compute the $\qt$-characters of the fundamental representations $V_{i,r}$ is given explicitly, we will also recall it in Section~\ref{sectapp}.
\end{rem}

\begin{proof}
We prove this Proposition by induction on $m$, the number of times the mutation sequence $\mu_\mathscr{S}$ is applied on the initial quantum cluster variables $\{~u_{i,r}\mid (i,r)\in\mI\}$.

The base step is given noting, as in \citep{ACAA}, that the images by the isomorphism $\eta$ of the initial quantum cluster variables are indeed truncated $\qt$-characters. For all $(i,r)\in \mI$,
\begin{align*}
w_{i,r}^{(0)} & = \eta(u_{i,r}) = U_{i,r} = \prod_{\substack{k\geq 0 \\ r+2k\leq 0}}Y_{i,r+2k}\quad \in \Y_t^-.
\end{align*}
Thus
\begin{equation}\label{yW}
w_{i,r}^{(0)} =  [W_{k_{i,r},r}^{(i)}]_t^-.
\end{equation}

Let $m\geq 0$ and $(i,r)\in\mI$. Supposed we have applied $m$ times the mutation sequence $\mu_\mathscr{S}$, and a $(m+1)$th time on all vertices preceding $(i,r)$ in the sequence $\mathscr{S}$, and that all those previous vertices satisfy (\ref{eqprop}). 

We want to write the quantum exchange relation corresponding to the mutation at vertex $(i,r)$. From the proof of Theorem 3.1 in \citep{ACAA}, we know the shape of the quiver just before this mutation ($\A_t$ and $\A(\pmb u, \Gm)$ are defined on the same initial quiver and the mutation process on the quiver is the same for classical and quantum cluster algebras). 

As explained in \citep[Section 3.2.3]{ACAA}, for a general simply laced Lie algebra $\g$, the mutation process takes place at vertices $(i,r)$ having two (or one if $r=-\xi_i$) in-going arrows from $(i,r\pm 2)$ and outgoing arrows to vertices $(j,s)$, with $j\sim i$. Thus the effect of the mutation sequence $\mu_\mathscr{S}$ on two fixed columns of the quiver is the same as the effect of an iteration of the mutation sequence on the corresponding quiver of rank 2. 

Let us recall the mutation process on the quiver when $\g$ is of type $A_2$.

\begin{minipage}{3.5cm}
\begin{tikzpicture}[scale=0.8]
\node (16) at (0,0) {\boxed{$(1,0)$}};
\node (14) at (0,-2) {$(1,-2)$};
\node (12) at (0,-4) {$(1,-4)$};
\node (10) at (0,-6) {$\vdots$};

\node (15) at (2,-1) {$(2,-1)$};
\node (13) at (2,-3) {$(2,-3)$};
\node (11) at (2,-5) {$(2,-5)$};
\node (9) at (2,-7) {$\vdots$};

\draw [->] (10) edge (12);
\draw [->] (12) edge (14);
\draw [->] (14) edge (16);

\draw [->] (9) edge (11);
\draw [->] (11) edge (13);
\draw [->] (13) edge (15);

\draw [->] (16) edge (15);
\draw [->] (14) edge (13);
\draw [->] (12) edge (11);

\draw [->] (15) edge (14);
\draw [->] (13) edge (12);
\draw [->] (11) edge (10);
\end{tikzpicture}
\end{minipage}
\begin{minipage}{3.5cm}
\begin{tikzpicture}[scale=0.8]
\node (16) at (0,0) {$(1,0)$};
\node (14) at (0,-2) {\boxed{$(1,-2)$}};
\node (12) at (0,-4) {$(1,-4)$};
\node (10) at (0,-6) {$\vdots$};

\node (15) at (2,-1) {$(2,-1)$};
\node (13) at (2,-3) {$(2,-3)$};
\node (11) at (2,-5) {$(2,-5)$};
\node (9) at (2,-7) {$\vdots$};

\draw [->] (10) edge (12);
\draw [->] (12) edge (14);
\draw [->] (16) edge (14);

\draw [->] (9) edge (11);
\draw [->] (11) edge (13);
\draw [->] (13) edge (15);

\draw [->] (15) edge (16);
\draw [->] (14) edge (13);
\draw [->] (12) edge (11);

%\draw [->] (15) edge (14);
\draw [->] (13) edge (12);
\draw [->] (11) edge (10);

%\draw [->] (16) edge (13);
\end{tikzpicture}
\end{minipage}
\begin{minipage}{3.5cm}
\begin{tikzpicture}[scale=0.8]
\node (16) at (0,0) {$(1,0)$};
\node (14) at (0,-2) {$(1,-2)$};
\node (12) at (0,-4) {\boxed{$(1,-4)$}};
\node (10) at (0,-6) {$\vdots$};

\node (15) at (2,-1) {$(2,-1)$};
\node (13) at (2,-3) {$(2,-3)$};
\node (11) at (2,-5) {$(2,-5)$};
\node (9) at (2,-7) {$\vdots$};

\draw [->] (10) edge (12);
\draw [->] (14) edge (12);
\draw [->] (14) edge (16);

\draw [->] (9) edge (11);
\draw [->] (11) edge (13);
\draw [->] (13) edge (15);

\draw [->] (15) edge (16);
\draw [->] (13) edge (14);
\draw [->] (12) edge (11);

%\draw [->] (15) edge (14);
%\draw [->] (13) edge (12);
\draw [->] (11) edge (10);

\draw [->] (16) edge (13);
%\draw [->] (14) edge (11);
\end{tikzpicture}
\end{minipage}
\begin{minipage}{3.5cm}
\begin{tikzpicture}[scale=0.8]
\node (16) at (0,0) {$(1,0)$};
\node (14) at (0,-2) {$(1,-2)$};
\node (12) at (0,-4) {$(1,-4)$};
\node (10) at (0,-6) {$\vdots$};

\node (15) at (2,-1) {$(2,-1)$};
\node (13) at (2,-3) {$(2,-3)$};
\node (11) at (2,-5) {$(2,-5)$};
\node (9) at (2,-7) {$\vdots$};

\draw [->] (12) edge (10);
\draw [->] (12) edge (14);
\draw [->] (14) edge (16);

\draw [->] (9) edge (11);
\draw [->] (11) edge (13);
\draw [->] (13) edge (15);

\draw [->] (15) edge (16);
\draw [->] (13) edge (14);
\draw [->] (11) edge (12);

%\draw [->] (15) edge (14);
%\draw [->] (13) edge (12);
%\draw [->] (11) edge (10);

\draw [->] (16) edge (13);
\draw [->] (14) edge (11);
%\draw [->] (12) edge (9);
\end{tikzpicture}
\end{minipage}

After an infinite number of mutations (or a sufficiently large one), we start mutating on the second column.

\begin{minipage}{3.5cm}
\begin{tikzpicture}[scale=0.8]
\node (16) at (0,0) {$(1,0)$};
\node (14) at (0,-2) {$(1,-2)$};
\node (12) at (0,-4) {$(1,-4)$};
\node (10) at (0,-6) {$\vdots$};

\node (15) at (2,-1) {\boxed{$(2,-1)$}};
\node (13) at (2,-3) {$(2,-3)$};
\node (11) at (2,-5) {$(2,-5)$};
\node (9) at (2,-7) {$\vdots$};

\draw [->] (10) edge (12);
\draw [->] (12) edge (14);
\draw [->] (14) edge (16);

\draw [->] (9) edge (11);
\draw [->] (11) edge (13);
\draw [->] (13) edge (15);

\draw [->] (15) edge (16);
\draw [->] (13) edge (14);
\draw [->] (11) edge (12);

%\draw [->] (15) edge (14);
%\draw [->] (13) edge (12);
%\draw [->] (11) edge (10);

\draw [->] (16) edge (13);
\draw [->] (14) edge (11);
\draw [->] (12) edge (9);
\end{tikzpicture}
\end{minipage}
\begin{minipage}{3.5cm}
\begin{tikzpicture}[scale=0.8]
\node (16) at (0,0) {$(1,0)$};
\node (14) at (0,-2) {$(1,-2)$};
\node (12) at (0,-4) {$(1,-4)$};
\node (10) at (0,-6) {$\vdots$};

\node (15) at (2,-1) {$(2,-1)$};
\node (13) at (2,-3) {\boxed{$(2,-3)$}};
\node (11) at (2,-5) {$(2,-5)$};
\node (9) at (2,-7) {$\vdots$};

\draw [->] (10) edge (12);
\draw [->] (12) edge (14);
\draw [->] (14) edge (16);

\draw [->] (9) edge (11);
\draw [->] (11) edge (13);
\draw [->] (15) edge (13);

\draw [->] (16) edge (15);
\draw [->] (13) edge (14);
\draw [->] (11) edge (12);

%\draw [->] (15) edge (14);
%\draw [->] (13) edge (12);
%\draw [->] (11) edge (10);

%\draw [->] (16) edge (13);
\draw [->] (14) edge (11);
\draw [->] (12) edge (9);
\end{tikzpicture}
\end{minipage}
\begin{minipage}{3.5cm}
\begin{tikzpicture}[scale=0.8]
\node (16) at (0,0) {$(1,0)$};
\node (14) at (0,-2) {$(1,-2)$};
\node (12) at (0,-4) {$(1,-4)$};
\node (10) at (0,-6) {$\vdots$};

\node (15) at (2,-1) {$(2,-1)$};
\node (13) at (2,-3) {$(2,-3)$};
\node (11) at (2,-5) {\boxed{$(2,-5)$}};
\node (9) at (2,-7) {$\vdots$};

\draw [->] (10) edge (12);
\draw [->] (12) edge (14);
\draw [->] (14) edge (16);

\draw [->] (9) edge (11);
\draw [->] (13) edge (11);
\draw [->] (13) edge (15);

\draw [->] (16) edge (15);
\draw [->] (14) edge (13);
\draw [->] (11) edge (12);

\draw [->] (15) edge (14);
%\draw [->] (13) edge (12);
%\draw [->] (11) edge (10);

%\draw [->] (16) edge (13);
%\draw [->] (14) edge (11);
\draw [->] (12) edge (9);
\end{tikzpicture}
\end{minipage}
\begin{minipage}{3.5cm}
\begin{tikzpicture}[scale=0.8]
\node (16) at (0,0) {$(1,0)$};
\node (14) at (0,-2) {$(1,-2)$};
\node (12) at (0,-4) {$(1,-4)$};
\node (10) at (0,-6) {$\vdots$};

\node (15) at (2,-1) {$(2,-1)$};
\node (13) at (2,-3) {$(2,-3)$};
\node (11) at (2,-5) {$(2,-5)$};
\node (9) at (2,-7) {$\vdots$};

\draw [->] (10) edge (12);
\draw [->] (12) edge (14);
\draw [->] (14) edge (16);

\draw [->] (11) edge (9);
\draw [->] (11) edge (13);
\draw [->] (13) edge (15);

\draw [->] (16) edge (15);
\draw [->] (14) edge (13);
\draw [->] (12) edge (11);

\draw [->] (15) edge (14);
\draw [->] (13) edge (12);
%\draw [->] (11) edge (10);

%\draw [->] (16) edge (13);
%\draw [->] (14) edge (11);
%\draw [->] (12) edge (9);
\end{tikzpicture}
\end{minipage}

Thus, in general, the quantum exchange relation has the form:
\begin{equation}\label{1mut}
u_{i,r}^{(m+1)} = u_{i,r+2}^{(m+1)}u_{i,r-2}^{(m)} \left(u_{i,r}^{(m)}\right)^{-1} + \prod_{j\sim i_0}u_{j,r-\epsilon_i}^{(m+\xi_i)}\left(u_{i,r}^{(m)}\right)^{-1} ,
\end{equation}
where $u_{i,r+2}^{(m)}=1$ if $r+2\geq 0$ and $\epsilon_i$ is defined in (\ref{epsilon}), and both terms are commutative products. This relation can also be written:

\begin{equation}\label{mut}
u_{i,r}^{(m+1)}\ast u_{i,r}^{(m)} = t^\alpha u_{i,r+2}^{(m+1)}u_{i,r-2}^{(m)} + t^\beta \prod_{j\sim i_0}u_{j,r-\epsilon_i}^{(m+\xi_i)},
\end{equation}
where $\alpha, \beta\in\frac{1}{2}\Z$.

If we apply $\eta$ to (\ref{mut}), and use the induction hypothesis, we get the following relation in $\Y_t^-$:
\begin{multline}
\eta(u_{i,r}^{(m+1)})\ast [W_{k_{i,r},r-2m}^{(i)}]_t^- = t^\alpha [W_{k_{i,r+2},r-2m}^{(i)}]_t^-[W_{k_{i,r-2},r-2-2m}^{(i)}]_t^- \\
+ t^\beta \prod_{j\sim i} [W_{k_{j,r-\epsilon_i},r-1-2m}^{(j)}]_t^-.
\end{multline}
Whereas, the corresponding (truncated) quantum $T$-system relation (\ref{Tsysm}) is
\begin{multline}\label{1Tsys}
[W_{k_{i,r},r-2m-2}^{(i)}]_t^- \ast [W_{k_{i,r},r-2m}^{(i)}]_t^- = t^{\alpha'} [W_{k_{i,r}-1,r-2m}^{(i)}]_t^-[W_{k_{i,r}+1,r-2-2m}^{(i)}]_t^- \\
+ t^{\beta'} \prod_{j\sim i} [W_{k_{i,r},r-1-2m}^{(j)}]_t^-,
\end{multline}
where $\alpha', \beta'\in\frac{1}{2}\Z$ are given in (\ref{ag}). Note that one has indeed
\begin{equation}
k_{i,r+2}=k_{i,r}-1,\quad k_{i,r-2}=k_{i,r}+1, \quad \text{and } k_{j,r-\epsilon_i}=k_{i,r}, \text{ for } j\sim i.
\end{equation}

Let $k=k_{i,r}$ and $r'=r-2m$. Let us precise how to obtain the coefficients $\alpha$ and $\beta$. From (\ref{1mut}) and (\ref{mut}), $\alpha$ and $\beta$ are such that the terms
\begin{equation}\label{talphaWWW}
t^\alpha [W_{k-1,r'}^{(i)}]_t^-[W_{k+1,r'-2}^{(i)}]_t^- \ast \left( [W_{k,r'}^{(i)}]_t^-\right)^{-1},
\end{equation}
and
\begin{equation}\label{tbetaWWW}
t^\beta \prod_{j\sim i} [W_{k,r'-1}^{(j)}]_t^- \ast \left( [W_{k,r'}^{(i)}]_t^-\right)^{-1}, 
\end{equation}
are bar-invariant. Thus, if one takes only the dominant monomials of (\ref{talphaWWW}) and (\ref{tbetaWWW}), they are bar-invariant:
\begin{align*}
t^\alpha m_{k-1,r'}^{(i)}m_{k+1,r'-2}^{(i)}\ast \left(m_{k,r'}^{(i)}\right)^{-1} & = t^{-\alpha} \left(m_{k,r'}^{(i)}\right)^{-1} \ast m_{k-1,r'}^{(i)}m_{k+1,r'-2}^{(i)},\\
t^\beta \prod_{j\sim i}m_{k,r'-1}^{(j)} \ast \left(m_{k,r'}^{(i)}\right)^{-1} & = t^{-\beta} \left(m_{k,r'}^{(i)}\right)^{-1} \ast \prod_{j\sim i}m_{k,r'-1}^{(j)}.
\end{align*}
This enables us to compute $\alpha$ and $\beta$.
\begin{align*}
\alpha & = \frac{1}{2}\sum_{l=0}^{k-1}\left( \sum_{p=0}^{k-2}\mathcal{N}_{i,i}(2l-2p) +  \sum_{p=0}^{k}\mathcal{N}_{i,i}(2l-2p+2) \right)\\
&= \frac{1}{2}\sum_{l=0}^{k-1}\left( {\bf C}_{ii}(2l+1) -{\bf C}_{ii}(2l-2k+3) + {\bf C}_{ii}(2l+3) - {\bf C}_{ii}(2l-2k+1) \right) \\
& =  \frac{1}{2}\left( {\bf C}_{ii}(2k+1) + {\bf C}_{ii}(2k-1)\right) - {\bf C}_{ii}(1).
\end{align*}
Thus $\alpha = \alpha(i,k) = -1 + \frac{1}{2}\left( \tilde{C}_{ii}(2k-1) + \tilde{C}_{ii}(2k+1)\right)$. And
\begin{align*}
\beta & = \frac{1}{2}\sum_{j\sim i}\sum_{l=0}^{k-1}\sum_{p=0}^{k-1}\mathcal{N}_{i,j}(2l-2p+1)
= \frac{1}{2}\sum_{j\sim i}\sum_{l=0}^{k-1} \left( {\bf C}_{ij}(2l+2)- {\bf C}_{ij}(2l-2k+2)\right) \\
& = \frac{1}{2}\sum_{j\sim i}\left( {\bf C}_{ij}(2k) - \underbrace{{\bf C}_{ij}(0)}_{=0} \right) =  \frac{1}{2}\left( {\bf C}_{ii}(2k+1) + {\bf C}_{ii}(2k-1)\right), \quad \text{ using (\ref{Clem})}.
\end{align*}
Hence $\beta = \gamma(i,k)= \frac{1}{2}\left( \tilde{C}_{ii}(2k-1) + \tilde{C}_{ii}(2k+1)\right)$.

Thus $\alpha=\alpha'$ and $\beta=\beta'$, and
\begin{equation}
\eta(u_{i,r}^{(m+1)})\ast [W_{k,r'}^{(i)}]_t^- = [W_{k,r'-2}^{(i)}]_t^- \ast [W_{k,r'}^{(i)}]_t^-.
\end{equation}

However, $[W_{k,r'}^{(i)}]_t^-$ is invertible in the skew-field of fractions $\mathcal{F}_t$ of the quantum torus $\Y_t^-$ (see Remark~\ref{remsfield}). Thus
\begin{equation}
\eta(u_{i,r}^{(m+1)}) = [W_{k_{i,r},r-2-2m}^{(i)}]_t^-.
\end{equation}
This concludes the induction.

Finally, from \citep[Corollary 6.14]{CqC}, we know that for all $(i,r)\in \mI$, the monomials $m$ occurring in the $q$-character $\chi_q(W_{k,r}^{(i)})$ of the KR-module $W_{k,r}^{(i)}$ are products of $Y_{j,r+s}^{\pm 1}$, with $0\leq s\leq 2k + h$. Moreover, from Theorem~\ref{theoN}, if one writes the $\qt$-character $[W_{k,r}^{(i)}]_t$ of this KR-module as a linear combination of Laurent monomials in the variables $Y_{i,s}$, with coefficients in $\Z[t^{\pm 1}]$, all monomials which occur in this expansion also occur in its $q$-character. Thus, if $r+2k+h \leq 0$, the truncated $\qt$-character $[W_{k,r}^{(i)}]_t^-$ is equal to the $\qt$-character $[W_{k,r}^{(i)}]_t$.
In particular, for $m\geq h$, 
\begin{equation*}
w_{i,r}^{(m)}  = [W_{k_{i,r},r-2m}^{(i)}]_t.
\end{equation*}
\end{proof}

		\subsection{Proof of Theorem~\ref{theo1}}\label{sectproof}

We can now prove Theorem~\ref{theo1}. This proof is a quantum analog of the proof of \citep[Theorem 5.1]{ACAA}. Naturally, there are technical difficulties brought forth by the non-commutative quantum tori structure. For example, in our situation, the quantum cluster algebra $\A_t$ is isomorphic to the truncated quantum Grothendieck ring $K_t(\CZm)$ only via the isomorphism of quantum tori $\eta$ (\ref{eta}). In particular, we need the following result.

\begin{lem}\label{lemeta'}
The identification 
\begin{equation}
\eta' : u_{i,r}  \longmapsto  [W_{k_{i,r},r}^{(i)}]_t.
\end{equation}
extends to a well-defined injective $\Z[t^{\pm 1/2}]$-algebras morphism 
\begin{equation*}
\eta': T \to \mathcal{F}_t,
\end{equation*}
where $\mathcal{F}_t$ is the skew-field of fractions of $\mathcal{Y}_t$ (see Remark~\ref{remsfield}).

Moreover, the restriction of $\eta'$ to the quantum cluster algebra $\A_t$ has its image in the quantum torus $\mathcal{Y}_t$  and the $\Z[t^{\pm 1/2}]$-algebra morphisms $\eta,\eta'$ and $\pi$ satisfy the following commutative diagram:
\begin{equation}\label{diagetaeta'}
\xymatrix@C=1em@R=0.7em{
\A_t \ar[rr]^{\eta} \ar[rd]_{\eta'}& & \mathcal{Y}_t^- \\
&  \mathcal{Y}_t \ar[ru]_{\pi} &.
}
\end{equation}
\end{lem}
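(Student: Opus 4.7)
The approach is to verify in order that $\eta'$ is well defined on $T$, that the diagram commutes on generators giving both commutativity of (\ref{diagetaeta'}) and injectivity of $\eta'$, and finally that cluster variables land in $\mathcal{Y}_t$.

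The well-definedness is the main step. For each pair $(i,r), (j,s) \in \mI$, I would establish
\[
[W_{k_{i,r},r}^{(i)}]_t \ast [W_{k_{j,s},s}^{(j)}]_t = t^{L((i,r),(j,s))} [W_{k_{j,s},s}^{(j)}]_t \ast [W_{k_{i,r},r}^{(i)}]_t
\]
in $\mathcal{Y}_t$. First, I invoke the fact that tensor products of KR modules $W_{k_{i,r},r}^{(i)} \otimes W_{k_{j,s},s}^{(j)}$ with parameters in $\mI$ are irreducible (see \citep{BGAT}); by Corollary 5.5 of \citep{QGR} (cf.~Remark~\ref{remtcomTsys}), this implies the two $\qt$-characters $t$-commute in $\mathcal{Y}_t$. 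To identify the exponent, I apply $\pi$: using (\ref{yW}), $\pi([W_{k_{i,r},r}^{(i)}]_t) = U_{i,r}$, and since $\pi$ is a $\mathbb{Z}[t^{\pm 1/2}]$-algebra morphism, the projected identity in $\mathcal{Y}_t^-$ forces the exponent to match the $t$-commutation (\ref{tcommU}) of $U_{i,r}$ and $U_{j,s}$, which is $L((i,r),(j,s))$.

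Commutativity of (\ref{diagetaeta'}) is then immediate on generators: $\pi \circ \eta'(u_{i,r}) = [W_{k_{i,r},r}^{(i)}]_t^- = U_{i,r} = \eta(u_{i,r})$ by (\ref{yW}). Since $\pi \circ \eta'$ and $\eta$ are both $\mathbb{Z}[t^{\pm 1/2}]$-algebra morphisms on $T$ agreeing on the generators, they agree on all of $T$, hence on $\mathcal{A}_t \subset T$. Injectivity of $\eta'$ then follows from that of $\eta$ (Lemma~\ref{lemTY}): if $\eta'(X) = 0$ for some $X \in T$, then $\eta(X) = \pi(\eta'(X)) = 0$, whence $X = 0$. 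To prove $\eta'(\mathcal{A}_t) \subseteq \mathcal{Y}_t$, I would mirror the induction proving Proposition~\ref{propyirm} at the non-truncated level: under $\eta'$, each mutation step along $\mu_\mathcal{S}$ gives a quantum exchange relation which, after the same bar-invariance computation of the scalars $\alpha(i,k), \gamma(i,k)$, matches the full quantum $T$-system (\ref{Tsys}), a relation which holds in $K_t(\CZ) \subset \mathcal{Y}_t$. The induction then yields $\eta'(u_{i,r}^{(m)}) = [W_{k_{i,r},r-2m}^{(i)}]_t \in \mathcal{Y}_t$ for all $m \geq 0$, so the subalgebra of $T$ generated by these cluster variables maps into $\mathcal{Y}_t$, and by the quantum Laurent phenomenon $\mathcal{A}_t$ is contained in this subalgebra.

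The principal obstacle is the well-definedness step: matching the $t$-commutation exponent of the $\qt$-characters with $L((i,r),(j,s))$. The representation-theoretic input (irreducibility of tensor products of KR modules indexed by $\mI$) guarantees only the existence of a commutation scalar; pinning it down to the correct combinatorial expression requires the projection $\pi$ together with the identification $\pi([W_{k_{i,r},r}^{(i)}]_t) = U_{i,r}$ from (\ref{yW}), which reduces the computation to the $t$-commutation of the dominant monomials prescribed by the matrix $L$.
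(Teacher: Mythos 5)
Your overall skeleton (t-commutation of the images of the $u_{i,r}$ gives well-definedness; exponents pinned by dominant monomials; then push cluster variables into $\mathcal{Y}_t$ by re-running the mutation induction at the untruncated level) is close in spirit to the paper, and your computation of the exponent via the dominant monomials $U_{i,r}$ is essentially the paper's. But the key step is justified differently, and there is a genuine gap there: you assert that $W_{k_{i,r},r}^{(i)}\otimes W_{k_{j,s},s}^{(j)}$ is irreducible for \emph{all} pairs $(i,r),(j,s)\in\mI$, citing \citep{BGAT}. In the paper, \citep{BGAT} is invoked (Remark~\ref{remtcomTsys}) only for the very special pairs occurring in the quantum $T$-system, namely $W_{k-1,r+2}^{(i)}\otimes W_{k+1,r}^{(i)}$ and the pairs $W_{k,r+1}^{(j)}$, $j\sim i$. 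The pairwise simplicity of the whole initial family of KR modules is a much stronger representation-theoretic statement, nowhere established in the paper, and you give no argument (one would have to check a non-resonance condition for all nested/aligned pairs of strings, including different nodes). The paper's proof deliberately avoids this input: by Proposition~\ref{propyirm} the variables $u_{i,r}^{(h)}$ lie in a common cluster, hence $t$-commute, hence their $\eta$-images, which are the \emph{full} characters $[W^{(i)}_{k_{i,r},r-2h}]_t$, $t$-commute; shift-invariance of $\mathcal{Y}_t$ then transfers this to the $[W^{(i)}_{k_{i,r},r}]_t$, and the exponents are read off the dominant monomials. So the $t$-commutation is a consequence of the cluster structure already proved, not of a tensor-product irreducibility theorem.

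Two further steps do not work as written. First, your injectivity argument and your extension of the diagram to all of $T$ rest on the identity $\eta=\pi\circ\eta'$ on $T$, but $\pi$ of (\ref{pi}) is only a $\Z[t^{\pm 1/2}]$-linear truncation of $\mathcal{Y}_t$ (it is not an algebra morphism of the torus, and it is multiplicative only on suitable elements of $K_t(\CZ)$, as in Lemma~\ref{lemCZm}); moreover $\eta'(T)\not\subset\mathcal{Y}_t$, since inverses of $\qt$-characters are not Laurent polynomials, so $\pi\circ\eta'$ is not even defined on $T$. The commutative diagram only makes sense on $\A_t$, after one knows $\eta'(\A_t)\subset\mathcal{Y}_t$, and injectivity of $\eta'$ on $T$ needs a separate argument (e.g.\ a leading/dominant-monomial argument using that the $U_{i,r}$ are independent monomials). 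Second, your untruncated induction along $\mu_\mathscr{S}$ is fine as far as it goes (the same scalar computation matches (\ref{Tsys})), but it only yields $\eta'(u_{i,r}^{(m)})\in\mathcal{Y}_t$ for the cluster variables met along that particular sequence; the quantum Laurent phenomenon does not then place $\A_t$ in the subalgebra generated by these variables (Laurent expansions involve their inverses, and $\A_t$ is generated by the cluster variables of \emph{all} clusters). The paper closes exactly this gap with the shift trick: for an arbitrary cluster variable $X$ obtained by a finite sequence $\sigma$, first apply $h$ rounds of $\mu_\mathscr{S}$, then $\sigma$; the $\eta$-image of the resulting variable equals $\eta'(X)$ up to the spectral shift by $2h$, whence $\eta'(X)\in\mathcal{Y}_t$ for every cluster variable. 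You would need either this argument or some substitute covering arbitrary mutation sequences.
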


\begin{proof}
From Proposition~\ref{propyirm}, for all $(i,r)\in\mI$, the \emph{full} $\qt$-character $[W_{k_{i,r},r-2h}^{(i)}]_t$ is obtained as the image of the cluster variable sitting at vertex $(i,r)$ after applying $h$ times the mutation sequence $\mathscr{S}$ (which is locally a finite sequence of mutations):
\begin{equation}
\eta(u_{i,r}^{(h)}) = [W_{k_{i,r},r-2h}^{(i)}]_t.
\end{equation}
In particular, for any two vertices $(i,r),(j,s)$, the variables $u_{i,r}^{(h)}$ and $u_{j,s}^{(h)}$ belong to a common cluster and $t$-commute. Thus the $\qt$-characters $[W_{k_{i,r},r-2h}^{(i)}]_t$ $t$-commute. As the quantum torus $\mathcal{Y}_t$ is invariant by shift of quantum parameters ($Y_{j,s}\mapsto Y_{j,s+2}$), the $\qt$-characters $[W_{k_{i,r},r}^{(i)}]_t$ also $t$-commute for the product $\ast$. Their $t$-commutation relations are determined by their dominant monomials, which are $\eta(u_{i,r})$. Thus the $[W_{k_{i,r},r}^{(i)}]_t$ satisfy exactly the same $t$-commutations relations are the $u_{i,r}$. This proves the first part of the lemma.

Let $X$ be a cluster variable of $\A_t$ obtained from the initial seed $\pmb u =\{u_{i,r}\}$ via finite sequence of mutations $\sigma$. We want to show that $\eta'(X)\in \mathcal{Y}_t$. As the sequence of mutations $\sigma$ is finite, it will only involve a finite number of cluster variables. Now apply $h$ times the mutation sequence $\mu_\mathscr{S}$ to the initial seed so as to replace each cluster variable considered by $u_{i,r}^{(h)}$ (again, we only need a finite number of mutations). Let us summarize:
\begin{align*}
\eta'(u_{i,r}) = [W_{k_{i,r},r}^{(i)}]_t, \quad \eta(u_{i,r}^{(h)}) = [W_{k_{i,r},r-2h}^{(i)}]_t.
\end{align*} 
Let $X'$ be the cluster variable obtained by applying to this new seed the sequence of mutations $\sigma$. By construction, $\eta(X')$ is equal to $\eta'(X)$, up to the downward shift of spectral parameters by $2h$: every variable $Y_{j,s}^{\pm 1}$ is replaced by $Y_{j,s-2h}^{\pm 1}$. In particular, $\eta'(X)\in\mathcal{Y}_t$.

Next, the commutation of diagram (\ref{diagetaeta'}) is verified as it is satisfied on the initial seed $\pmb u =\{u_{i,r}\}$.
\end{proof}

Let $R_t$ be the image of the quantum cluster algebra $\A_t$
\begin{equation}
R_t:=\eta(\A_t)\quad \in \Y_t^-.
\end{equation}

The inclusion $K_t(\CZm) \subset R_t$ is essentially contained in Proposition~\ref{propyirm}. For the reverse inclusion, the main idea is to use the characterization of the quantum Grothendieck ring as the intersection of kernel of operators, called deformed screening operators. We show by induction on the length of a sequence of mutations that the images of all cluster variables belong to those kernels. The images of the initial cluster variables $u_{i,r}$ clearly belong to the quantum Grothendieck ring, and the screening operators being derivations, the exchange relations force the newly created cluster variables to be in the intersection of the kernels too. let us prove this in details.

\begin{proof}
Recall from Lemma~\ref{lemCZm} that the quantum Grothendieck ring $K_t(\CZm)$ is algebraically generated by the truncated $\qt$-characters of the fundamental modules:
\begin{equation*}
K_t(\CZm) = \left\langle [L(Y_{i,r})]_t^- \mid (i,r)\in\mI \right\rangle.
\end{equation*}
By Proposition~\ref{propyirm}, for all $(i,r)\in\mI$,
\begin{equation}
[L(Y_{i,r})]_t^- = w_{i,\xi_i}^{(\frac{r-\xi_i}{2})} = \eta\left(u_{i,\xi_i}^{(\frac{r-\xi_i}{2})} \right) \quad \in\eta(\A_t).
\end{equation}
Which proves the first inclusion:
\begin{equation}
K_t(\CZm) \subset R_t.
\end{equation}

We prove the reverse inclusion as explained just above.

As explained in Section~\ref{sectqGR}, Hernandez proved in \citep{tAOE} that for all $i\in I$ there exists operators $S_{i,t} : \Y_t \to \Y_{i,t}$, where $\Y_{i,t}$ is a $\mathcal{Y}_t$-module, which are $\Z[t^{\pm 1}]$-linear and derivations, such that
\begin{equation}\label{capSit}
\bigcap_{i\in I}\ker(S_{i,t}) = K_t(\CZ).
\end{equation}

Notice that these operators characterize the quantum Grothendieck ring $K_t(\CZ)$ and not $K_t(\CZm)$. Hence the need for Lemma~\ref{lemeta'}.

Let us prove by induction that all cluster variables $Z$ in $\A_t$ satisfy $\eta'(Z) \in K_t(\CZ)$.

Let $Z$ be a quantum cluster variable in $\A_t$. If $Z$ belongs to the initial cluster variables, $Z=u_{i,r}$ and 
\begin{equation}
\eta'(Z)=\eta'(u_{i,r}) =  [W_{k_{i,r},r}]_t \quad \in K_t(\CZ).
\end{equation}
If not, then by induction on the length of the sequence $\mu$, one can assume that $Z$ is obtained via a quantum exchange relation 
\begin{equation}\label{ZZ1}
Z\ast Z_1 = t^\alpha M_1 + t^\beta M_2,
\end{equation}
where $Z_1$ is a quantum cluster variable of $\A_t$, $M_1$ and $M_2$ are quantum cluster monomials of $\A_t$ and 
\begin{equation*}
\eta'(Z_1), \eta'(M_1), \eta'(M_2) \in K_t(\C). 
\end{equation*}
 Apply $\eta'$ to (\ref{ZZ1}):
\begin{equation}
\eta'(Z)\ast \eta'(Z_1 )= t^\alpha\eta'( M_1) + t^\beta \eta'(M_2).
\end{equation}
For all $i\in I$, apply the derivation $S_{i,t}$:
\begin{align*}
S_{i,t}\left( \eta'(Z)\ast \eta'(Z_1 ) \right) & = S_{i,t}\left( \eta'(Z)\right)\ast \eta'(Z_1 ) +\eta'(Z)\ast S_{i,t}\left(\eta'(Z_1 ) \right) \\
	& = t^\alpha S_{i,t}\left(\eta'( M_1)\right) + t^\beta S_{i,t}\left(\eta'( M_2)\right).
\end{align*}
However, by hypothesis, 
\begin{flalign*}
S_{i,t}\left( \eta'(Z_1)\right)  = 0, &\\
S_{i,t}\left( \eta'(M_1)\right)  = 0,\\
S_{i,t}\left( \eta'(M_1)\right)  = 0.
\end{flalign*}
Moreover, $\eta'(Z_1 )\neq 0$ and the images of the screening operator is in a free module over $\mathcal{Y}_t$ by Lemma~\ref{lemYit}. Thus $S_{i,t}\left( \eta'(Z)\right) =0$, for all $i\in I$. Hence
\begin{equation*}
\eta'(Z) \in K_t(\CZm),
\end{equation*}
which concludes the induction. We have proven 
\begin{equation*}
\eta'(\A_t) \subset K_t(\CZ).
\end{equation*}
Then, by the commutation of the diagram (\ref{diagetaeta'}) in Lemma~\ref{lemeta'},
\begin{equation*}
R_t = \eta(\A_t) \subset K_t(\CZm).
\end{equation*}
Which concludes the proof of the theorem.

%Consider the quantum seed $\{u_{i,r}'\}_{(i,r)\in\mI}$ obtained after applying $m=h$ times the mutation sequence $\mu_\mathscr{S}$ to the initial seed $\{u_{i,r}\}_{(i,r)\in\mI}$. From relation (\ref{eqprop2}) in Proposition~\ref{propyirm}, for all $(i,r)\in\mI$ the quantum cluster variable $u_{i,r}'$ satisfies
%\begin{equation*}
%\eta(u_{i,r}') = w_{i,r}^{(h)} = [W_{k_{i,r},r-2h}]_t \quad\in K_t(\CZ).
%\end{equation*}

%We define a new inclusion of quantum tori by shifting uniformly the quantum parameters in the image by $\eta$: 
%\begin{equation}
%\eta' : \begin{array}{rcl}
%T &  \longrightarrow & \Y_t\\
%u_{i,r} & \longmapsto & \prod_{\substack{k\geq 0 \\ r+2k\leq 0}}Y_{i,r+2k+2h}.
%\end{array}
%\end{equation}
%This map is well-defined, as the $t$-commutation relations (\ref{relY}) between the $Y_{i,r}$ are invariant under translation of quantum parameters. Now, for all $(i,r)\in\mI$,
%\begin{equation}\label{eta'z'}
%\eta'(u_{i,r}') = w_{i,r}^{(h)} = [W_{k_{i,r},r}]_t \quad\in K_t(\CZ).
%\end{equation}
%Thus $\eta'(u_{i,r}') \in \bigcap_{i\in I}\ker(S_{i,t}) \subset K_t(\CZ)$. We show that in more generality, 
%\begin{equation}\label{incleta'}
%\eta'(\A_t)  \subset K_t(\CZ),
%\end{equation}
%with the characterization (\ref{capSit}).
\end{proof}

	\section{Application to the proof of an inclusion conjecture}\label{sectapp}

In this section, we use the quantum cluster algebra structure of the Grothendieck ring $\CZm$ to prove that the quantum Grothendieck ring $K_t(\mathcal{O}^+_\Z)$ defined in \citep{LEA2} contains $K_t(\CZm)$. In other words, we prove Conjecture~1 in \citep{LEA2}. The result was already proven in that paper in type $A$, but the core argument used was different.

		\subsection{The quantum Grothendieck ring $K_t(\mathcal{O}^+_\Z)$}
The quantum Grothendieck ring $K_t(\mathcal{O}^+_\Z)$ is defined as a quantum cluster algebra on the full infinite quiver $\Gamma$, of which the semi-infinite quiver $\Gm$ is a subquiver.

Let us recall some notations. For all $i,j\in I$, $\mathcal{F}_{ij}: \Z \to \Z$ is a anti-symmetrical map such that, for all $m\geq 0$,
\begin{equation}
\mathcal{F}_{ij}(m) = - \sum_{\substack{k\geq 1 \\ m \geq 2k-1}}\tilde{C}_{ij}(m -2k+1).
\end{equation}
Let $\mathcal{T}_t$ be the quantum torus defined as the $\Z[t^{\pm 1/2}]$-algebra generated by the variables $z_{i,r}^\pm$, for $(i,r)\in\hat{I}$, with a non-commutative product $\ast$, and the $t$-commutations relations
\begin{equation}
z_{i,r}\ast z_{j,s} = t^{\F_{ij}(s-r)} z_{j,s}\ast z_{i,r}, \quad \left( (i,r),(j,s)\in\hat{I} \right).
\end{equation}
Recall also from \citep[Proposition 5.2.2]{LEA2} the inclusion of quantum tori $\mathcal{J}$ (with a slight shift of parameters on the $z_{i,r}$): 
\begin{equation}\label{inclJ}
\mathcal{J}: \left\lbrace\begin{array}{rcl}
				\mathcal{Y}_t & \longrightarrow & \mathcal{T}_t,\\
				Y_{i,r} & \longmapsto & z_{i,r}\left(z_{i,r+2}\right)^{-1}.
\end{array}\right. .
\end{equation}

Let $\Lambda$ be the infinite skew-symmetric $\hat{I}\times\hat{I}$-matrix:
\begin{equation}\label{Lambdatot}
\Lambda_{(i,r),(j,s)} = \F_{ij}(s-r), \quad \left( (i,r),(j,s)\in\hat{I}\right).
\end{equation}
From \citep{LEA2}, the quiver $\Gamma$ and the skew-symmetric matrix $\Lambda$ form a compatible pair. Let $\A_t(\Gamma,\Lambda)$ be the associated quantum cluster algebra. Then, as in \citep[Definition 6.3.5]{LEA2},
\begin{equation}
K_t(\mathcal{O}^+_\Z) := \A_t(\Gamma,\Lambda)\hat{\otimes}\mathcal{E},
\end{equation}
where $\mathcal{E}$ is a commutative ring and the completion allows for certain countable sums.

		\subsection{Intermediate quantum cluster algebras}

The general idea is to see the quantum cluster algebra $\A_t$ as a "sub-quantum cluster algebra" of $\A_t(\Gamma, \Lambda)$ (this term is not well-defined). However, as in Section~\ref{sectproof} and contrary to the aforementioned proof, as we are dealing with quantum cluster algebras in our setting, this is not done trivially. Mainly, one notices that the map
\begin{equation*}
\begin{array}{rcl}
	T & \longrightarrow & \mathcal{T}_t \\
	u_{i,r} & \longmapsto & z_{i,r},
\end{array}
\end{equation*}
is not a well-defined inclusion of quantum tori, as the generators $u_{i,r}$ and $z_{i,r}$ do not satisfy the same $t$-commutation relations.

First, consider the subquiver $\Gamma^-$ of $\Gamma$ of index set 
\begin{equation}
\hat{I}^-_{\leq 2} := \I\cap\{ (i,r) \mid i\in I, r\leq 2\},
\end{equation}
such that the vertices $(i,r)$, with $r>0$ are frozen.

To summarize, for $\xi_i=0$ and $j\sim i$:

$\Gamma=$
\begin{minipage}[c]{3.5cm}
\begin{tikzpicture}
\node (18) at (0,2) {$\vdots$};
\node (16) at (0,0) {$(i,2)$};
\node (14) at (0,-2) {$(i,0)$};
\node (12) at (0,-4) {$(i,-2)$};
\node (10) at (0,-6) {$\vdots$};

\node (17) at (2,1) {$\vdots$};
\node (15) at (2,-1) {$(j,1)$};
\node (13) at (2,-3) {$(j,-1)$};
\node (11) at (2,-5) {$\vdots$};

\draw [->] (10) edge (12);
\draw [->] (12) edge (14);
\draw [->] (14) edge (16);
\draw [->] (16) edge (18);

\draw [->] (11) edge (13);
\draw [->] (13) edge (15);
\draw [->] (15) edge (17);

\draw [->] (16) edge (15);
\draw [->] (14) edge (13);
\draw [->] (12) edge (11);

\draw [->] (15) edge (14);
\draw [->] (13) edge (12);
\end{tikzpicture}
\end{minipage}
, $\Gamma^-=$
\begin{minipage}[c]{3.5cm}
\begin{tikzpicture}
\node (16) at (0,0) {\boxed{$(i,2)$}};
\node (14) at (0,-2) {$(i,0)$};
\node (12) at (0,-4) {$(i,-2)$};
\node (10) at (0,-6) {$\vdots$};

\node (15) at (2,-1) {\boxed{$(j,1)$}};
\node (13) at (2,-3) {$(j,-1)$};
\node (11) at (2,-5) {$\vdots$};

\draw [->] (10) edge (12);
\draw [->] (12) edge (14);
\draw [->] (14) edge (16);

\draw [->] (11) edge (13);
\draw [->] (13) edge (15);

\draw [->] (14) edge (13);
\draw [->] (12) edge (11);

\draw [->] (15) edge (14);
\draw [->] (13) edge (12);
\end{tikzpicture}
\end{minipage}
, $\Gm=$
\begin{minipage}[c]{3.5cm}
\begin{tikzpicture}
\node (14) at (0,-2) {$(i,0)$};
\node (12) at (0,-4) {$(i,-2)$};
\node (10) at (0,-6) {$\vdots$};

\node (13) at (2,-3) {$(j,-1)$};
\node (11) at (2,-5) {$\vdots$};

\draw [->] (10) edge (12);
\draw [->] (12) edge (14);

\draw [->] (11) edge (13);

\draw [->] (14) edge (13);
\draw [->] (12) edge (11);

\draw [->] (13) edge (12);
\end{tikzpicture}
\end{minipage}.

The quivers $\Gamma$ and $\Gamma^-$ are only connected by coefficients (as in \citep[Definition 4.1]{GRA1}), thus by \citep[Theorem 4.5]{GRA1} the inclusion of seeds $\Gamma^-,\Lambda \subset \Gamma,\Lambda$ induces an inclusion of the quantum cluster algebra $\A_t(\Gamma^-,\Lambda)$ into the quantum cluster algebra of $\A_t(\Gamma,\Lambda)$.

Now we need to link the quantum cluster algebras $\A_t$ and $\A_t(\Gamma^-,\Lambda)$. 

In order to do that, we use a result from \citep{GRA2}, which deals with \emph{graded} cluster algebras.

\begin{defi}\citep{GRA2}
A quantum cluster seed $(B, \Lambda)$ is \emph{graded} if there exists  an integer column vector $G$ such that, for all mutable indices $k$, the $k$th row of $B$, $B_j$ satisfies $B_j G=0$.

Then, the $G$-degree of the initial variables are set by the vector $G$: for all cluster variables $X_k$ in the initial cluster $\bar{X}$, $\deg_G(X)=G_i$. 

The grading condition is equivalent to the following, for all mutable variables $X_k$, the sum of the degrees of all variables with arrows to $X_k$ is equal to the sum of the degrees of all variables with arrows coming from $X_k$, i.e. exchange relations are homogeneous. Hence, each cluster variable has a well-defined degree.
\end{defi}

Let us start by considering the quantum cluster algebra $\A_t(\Gamma^-, L)$, built on the quiver $\Gamma^-$, with coefficients 1 on the frozen vertices. This quantum cluster algebra is clearly isomorphic to $\A_t$, and is a graded quantum cluster algebra of grading $G=0$.

Next, for all $i\in I$, we apply the process of \citep[Theorem 4.6]{GRA2} to add coefficients $f_i$, while twisting the $t$-commutation relations.

Let 
\begin{equation}
\underline{u}_i(j,s) = \delta_{i,j}, \quad \left( (j,s) \in \mI_{\leq 2} \right)
\end{equation}
and 
\begin{equation}
\underline{t}_i(j,s) = -\mathcal{F}_{ij}(2-s-\xi_i), \quad \left( (j,s) \in \mI_{\leq 2} \right),
\end{equation}
with $\xi : I\to \{0,1\}$ the height function, fixed in Section~\ref{sectheightfun}.
Then 
\begin{lem}
For all $i\in I$, $\underline{u}_i$ and $\underline{t}_i$ are gradings for the ice quiver $\Gamma^-$.
\end{lem}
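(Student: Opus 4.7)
The plan is to verify the grading condition directly at every mutable vertex of the ice quiver $\Gamma^-$, separately for $\underline{u}_i$ and $\underline{t}_i$. A mutable vertex of $\Gamma^-$ is a pair $(j,s) \in \hat{I}^-_{\leq 2}$ with $s \leq 0$, and from the definition of $\Gamma$ in Section~\ref{sectinftyquiver} its incoming neighbours in $\Gamma^-$ are $(j, s-2)$ and $(k, s+1)$ for $k \sim j$, while the outgoing neighbours are $(j, s+2)$ and $(k, s-1)$ for $k \sim j$. Because $s \leq 0$, all four types of neighbours automatically lie in $\hat{I}^-_{\leq 2}$ (including the frozen ones at heights $1$ or $2$), so no arrows are missing from the balance count. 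Consequently, writing $G$ for either grading, the balance condition at $(j,s)$ reads
$$G(j, s-2) + \sum_{k \sim j} G(k, s+1) = G(j, s+2) + \sum_{k \sim j} G(k, s-1).$$

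For $\underline{u}_i$ this is immediate: the degree $\delta_{i,j}$ depends only on the column index $j$ (not on the height $s$), and both sides of the balance equation collapse to $\delta_{i,j} + \sum_{k \sim j} \delta_{i,k}$.

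For $\underline{t}_i$, I would set $m := 2 - s - \xi_i$, so that after substitution the balance condition becomes
$$\mathcal{F}_{ij}(m+2) - \mathcal{F}_{ij}(m-2) = \sum_{k \sim j} \bigl[\mathcal{F}_{ik}(m+1) - \mathcal{F}_{ik}(m-1)\bigr].$$
The next step is to telescope the defining series $\mathcal{F}_{ij}(n) = -\sum_{k \geq 1,\, n \geq 2k-1} \tilde{C}_{ij}(n-2k+1)$ via the shift $k \mapsto k-1$ to obtain the identity $\mathcal{F}_{ij}(n+2) - \mathcal{F}_{ij}(n) = -\tilde{C}_{ij}(n+1)$, valid for every $n \geq -1$. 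Since the mutability constraint $s \leq 0$ forces $m \geq 2 - \xi_i \geq 1$, this shift formula applies to each of the differences appearing above, and the balance condition reduces cleanly to
$$\tilde{C}_{ij}(m-1) + \tilde{C}_{ij}(m+1) = \sum_{k \sim j} \tilde{C}_{ik}(m), \quad m \geq 1,$$
which is precisely the recurrence stated in Lemma~\ref{lemCtildeC}.

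The only real technical point is the bookkeeping needed to confirm that every neighbour of a mutable $(j,s)$ does lie in $\hat{I}^-_{\leq 2}$ (so that the balance sum in $\Gamma^-$ accounts for all incident arrows of $\Gamma$, including those from frozen vertices at heights $1$ and $2$), together with the derivation of the shift identity for $\mathcal{F}_{ij}$ and a careful check that the range $m \geq 1$ suffices; once these are in place, the grading property for $\underline{t}_i$ is a direct translation of the quantum Cartan recurrence already at our disposal.
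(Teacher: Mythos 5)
Your proof is correct and follows essentially the same route as the paper: one checks $B_{(j,s)}\,\underline{u}_i=0$ and $B_{(j,s)}\,\underline{t}_i=0$ at each mutable vertex, the first being trivial and the second reducing, via the telescoping identity $\mathcal{F}_{ij}(n+2)-\mathcal{F}_{ij}(n)=-\tilde{C}_{ij}(n+1)$, to the recurrence of Lemma~\ref{lemCtildeC} with $m=2-s-\xi_i\geq 1$. The only difference is presentational: you spell out the shift identity and the range bookkeeping that the paper uses implicitly.
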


\begin{proof}
For all $(j,s) \in\mI$, let $B_{(j,s)}$ be the $(j,s)$th "row" of the $\It\times \It$-skew-symmetric matrix encoding the adjacency of the ice quiver $\Gamma^-$.

For all $i\in I$, 
\begin{align*}
	B_{(j,s)}\underline{u}_i & = \sum_{(k,r)\in\It}B_{(j,s)}(k,r)\underline{u}_i(k,r) \\
	& = \underline{u}_i(j,s+2) - \underline{u}_i(j,s-2) + \sum_{k\sim j}\left( \underline{u}_i(k,s-1) - \underline{u}_i(k,s+1)\right) \\
	& = \delta_{i,j} - \delta_{i,j} + \sum_{k\sim j}\left( \delta_{i,k} -\delta_{i,k} \right) =0.
\end{align*}
And 
\begin{align*}
	B_{(j,s)}\underline{t}_i & = \sum_{(k,r)\in\It}B_{(j,s)}(k,r)\underline{t}_i(k,r) \\
	& = \underline{t}_i(j,s+2) - \underline{t}_i(j,s-2) + \sum_{k\sim j}\left( \underline{t}_i(k,s-1) - \underline{t}_i(k,s+1)\right) \\
	& =  -\F_{ij}(-s-\xi_i) + \F_{ij}(4-s-\xi_i) + \sum_{k\sim j}\left( -\F_{ik}(3-s-\xi_i) + \F_{ik}(1-s-\xi_i) \right) \\
	& = -\tilde{C}_{ij}(3-s-\xi_i) - \tilde{C}_{ij}(1-s-\xi_i) + \sum_{k\sim j}\tilde{C}_{ik}(2-s-\xi_i)= 0,
\end{align*}
from Lemma~\ref{lemCtildeC}.
\end{proof}
Then, by \citep[Theorem 4.6]{GRA2} (applied $n$ times), the following is a valid set of initial data for a (multi-)graded quantum cluster algebra $\A_t(\tilde{\pmb u},\tilde{B},\tilde{L},\tilde{G})$, where
\begin{itemize}
	\item[•] or all $(i,r)\in\It$, 
\begin{equation}\label{newtildeu}
	\tilde{u}_{i,r} = \left\lbrace\begin{array}{ll}
				u_{i,r}f_i & \text{ if } r\leq 0,\\
				f_i & \text{ otherwise}
	\end{array}\right. .
\end{equation}
	and $\tilde{\pmb u}=\{\tilde{u}_{i,r}\}_{(i,r)\in\It}$.
	\item[•] $\tilde{B}=B$, the $\It\times\It$-skew-symmetric adjacency matrix of the quiver $\Gamma^-$.
	\item[•]  $\tilde{L}$ encodes the $t$-commutations relations, such that, for all $(i,r)\in\mI$, and $j\in I$,
\begin{equation}\label{newtcommfj}
	f_j\ast u_{i,r} = t^{\underline{t}_j(i,r)} u_{i,r}\ast f_j,
\end{equation}
and the $f_j$ pairwise commute.
	\item[•] $\tilde{G}$ is a multi-grading, i.e. instead of being an integer column vector, each entry in $\tilde{G}$ is in the lattice $\Z^I$. It is defined by, for all $(i,r)\in\It$,
\begin{equation}\label{tildeG}
	\tilde{G}(\tilde{u}_{i,r})=e_i \quad\in \Z^I,
\end{equation}
or $\deg_i=\underline{u}_i$, for all $i\in I$.

This is indeed the construction of \citep{GRA2}, with the initial grading on $\A_t(\Gamma^-,L)$ being $G\equiv 0$. The new quantum cluster algebra is denoted by $\A_t^{\underline{u},\underline{t}}(\Gamma^-,L)$, to show that it is a twisted version of $\A_t(\Gamma^-,L)$.
\end{itemize}

\begin{prop}\label{propisoquant}
The quantum cluster algebra $\A_t(\tilde{\pmb u},\tilde{B},\tilde{L})$ is isomorphic to the quantum cluster algebra $\A_t(\Gamma^-,\Lambda)$.
\end{prop}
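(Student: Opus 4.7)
The plan is to exhibit a map $\Phi$ on initial seeds by $\tilde{u}_{i,r}\mapsto z_{i,r}$ for $(i,r)\in\It$, and then to verify that this is an isomorphism of quantum seeds. Since the exchange matrix $\tilde{B}$ of $\A_t(\tilde{\pmb u},\tilde{B},\tilde{L})$ is by construction the adjacency matrix of $\Gamma^-$, it already coincides with the exchange matrix of $\A_t(\Gamma^-,\Lambda)$. So once $\Phi$ is shown to be an isomorphism of the initial quantum tori, standard quantum cluster algebra theory will yield the isomorphism of the two quantum cluster algebras themselves.

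The core step is verifying that $\Phi$ preserves $t$-commutations. For $(i,r),(j,s)\in\mI$ (both mutable), substituting $\tilde{u}_{i,r}=u_{i,r}f_i$ and using (\ref{tcommU}), (\ref{newtcommfj}) together with the pairwise commutation of the $f_i$'s, one computes
\[
\tilde{u}_{i,r}\ast\tilde{u}_{j,s}=t^{L((i,r),(j,s))+\underline{t}_i(j,s)-\underline{t}_j(i,r)}\,\tilde{u}_{j,s}\ast\tilde{u}_{i,r}.
\]
Comparing this with $z_{i,r}\ast z_{j,s}=t^{\F_{ij}(s-r)}\,z_{j,s}\ast z_{i,r}$, the problem reduces to proving the identity
\[
L((i,r),(j,s))=\F_{ij}(s-r)+\F_{ij}(2-s-\xi_i)-\F_{ij}(2-r-\xi_j),
\]
where one uses $\F_{ji}=\F_{ij}$ (since $\tilde{C}$ is symmetric in $ADE$ types) and the antisymmetry $\F_{ij}(-m)=-\F_{ij}(m)$. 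This identity can in principle be checked by direct manipulation of the definitions of $L$, $\mathcal{N}$ and $\F$ together with the recurrence of Lemma~\ref{lemCtildeC}.

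A conceptually cleaner route, which I would favor, is to note that the inclusion of quantum tori $\mathcal{J}:\Y_t\hookrightarrow\mathcal{T}_t$ from (\ref{inclJ}), established in \citep[Proposition 5.2.2]{LEA2}, precomposed with the isomorphism $\eta:T\to\Y_t^-$ of Lemma~\ref{lemTY}, already sends $u_{i,r}$ to the telescoping product $z_{i,r}\,z_{i,2-\xi_i}^{-1}$. Setting $\Phi(f_i):=z_{i,2-\xi_i}$, namely the frozen variable at the top of column $i$ in $\Gamma^-$, then extends $\mathcal{J}\circ\eta$ to the desired $\Phi$, and the required $t$-commutation relations are inherited automatically from those of $\mathcal{J}$.

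The remaining boundary cases, where $r$ or $s$ is positive so that one of the variables is a frozen $f$-coefficient, are straightforward: the frozen vertices sit precisely at $(i,2-\xi_i)$, and one checks using $\tilde{C}_{ij}(m)=0$ for $m\le 0$ that $\F_{ij}(s-r)$ matches the exponent produced by $\underline{t}$. I expect the main obstacle to be the bookkeeping in the identity above, particularly tracking parities and the effect of $\xi$ at the top of each column of $\Gm$; once this is settled, the remainder of the proof is largely formal.
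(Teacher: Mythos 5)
Your proposal is correct, and its core reduction is the same as the paper's: since $\tilde B$ is by construction the adjacency matrix of $\Gamma^-$ with the same frozen set, everything comes down to the matrix identity $\tilde L=\Lambda_{|\It}$, i.e.\ to $L((i,r),(j,s))+\underline{t}_i(j,s)-\underline{t}_j(i,r)=\F_{ij}(s-r)$ on mutable pairs together with the frozen rows. Where you diverge is in how this identity is established. Your first route is exactly the paper's, but you leave it at ``can in principle be checked''; the paper actually carries it out, telescoping the double sum defining $L$ by means of the identity $\mathcal{N}_{ij}(m)=2\F_{ij}(m)-\F_{ij}(m+2)-\F_{ij}(m-2)$, which it extracts from the proof of \citep[Proposition 5.2.2]{LEA2}. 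Your favored second route instead uses that same Proposition 5.2.2 as a black box: since $\mathcal{J}$ and $\eta$ are morphisms of quantum tori, the element $\rho(u_{i,r})=z_{i,r}z_{i,2-\xi_i}^{-1}$ satisfies the $L$-relations, and bilinearity of commutation exponents then forces the mutable--mutable identity once you have the two small frozen checks, namely $\F_{ij}(\xi_i-\xi_j)=0$ (from $\F_{ij}(\pm 1)=\mp\tilde{C}_{ij}(0)=0$) and $\underline{t}_i(j,s)=\F_{ij}(s+\xi_i-2)$ (pure antisymmetry of $\F$, so the appeal to $\tilde{C}_{ij}(m)=0$ is not really what is needed there). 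This is legitimate and arguably cleaner --- it is precisely the content of the paper's identification (\ref{uzf}) and diagram (\ref{diagTTYY}), which the paper only states as a remark after the proof --- but you should spell out the bilinearity step rather than saying the relations are ``inherited automatically'', since the extension of $\mathcal{J}\circ\eta$ by $\Phi(f_i)=z_{i,2-\xi_i}$ must be checked against the twisted relations (\ref{newtcommfj}) and the mutual commutation of the $f_i$. What each approach buys: yours avoids redoing the telescoping computation and makes the conceptual origin of the twist transparent, at the price of relying on the external morphism statement; the paper's computation keeps the exponents explicit and self-contained at the level of the matrix identity.
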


\begin{proof}
The rest of the data being the same, we only have to check that the $\It\times \It$-skew-symmetric matrices $\tilde{L}$ and $\Lambda_{|\It}$ are equal.

From (\ref{Lambdatot}), for all $\left( (i,r),(j,s)\in\It\right)$,
\begin{equation*}
\Lambda\left( (i,r),(j,s)\right) =  \F_{ij}(s-r).
\end{equation*}
And $\tilde{L}$ is defined as 
\begin{equation}
\tilde{u}_{i,r}\ast \tilde{u}_{j,s} = t^{\tilde{L}\left( (i,r),(j,s)\right)}\tilde{u}_{j,s}\ast\tilde{u}_{i,r}.
\end{equation}
Hence,
\begin{equation*}
\tilde{L}\left( (i,r),(j,s)\right) = \left\lbrace \begin{array}{ll}
		L\left( (i,r),(j,s)\right) + \underline{t}_i(j,s) - \underline{t}_j(i,r) & ,  \text{ if } (i,r),(j,s)\in\mI,\\
		\underline{t}_i(j,s)  & ,  \text{ if } \left[ \begin{array}{l}
						(i,r)=(i,-\xi_i+2)\\
						(j,s)\in\mI
\end{array}\right.,\\
		0 & , \text{ if } \left[\begin{array}{l}
				(i,r) =(i,-\xi_i+2)\\
				(j,s) =(j,-\xi_j+2)
		\end{array}\right. .
\end{array}\right. 
\end{equation*}
First, notice that, for all $i,j\in I$, $m\in\Z$,
\begin{equation}
\mathcal{N}_{ij}(m) = 2\F_{ij}(m) - \F_{ij}(m+2) - \F_{ij}(m-2).
\end{equation}
This result is proven in \citep{LEA2}, in the course of the proof of Proposition 5.2.2.

Thus, for all $(i,r),(j,s)\in\mI$,
\begin{align*}
	L\left( (i,r),(j,s)\right) & = \sum_{\substack{k\geq 0\\ r+2k\leq 0}} \sum_{\substack{l\geq 0\\ s+2l\leq 0}}\mathcal{N}_{ij}(s+2l-r-2k) \\
	& = \sum_{\substack{k\geq 0\\ r+2k\leq 0}} \left( \F_{ij}(s-r-2k) - \F_{ij}(s-r-2k-2) \right. \\
	& \left. + \F_{ij}(-\xi_i - r -2k) - \F_{ij}(-\xi_j-r-2k+2) \right)\\
	& = \F_{ij}(s-r) - \F_{ij}(s+ \xi_i-2) + \underbrace{\F_{ij}(-\xi_j+\xi_i)}_{=0} - \F_{ij}(-\xi_j-r+2)\\
	& = \F_{ij}(s-r) -\underline{t}_i(j,s) + \underline{t}_j(i,r).
\end{align*}
And of course, for all $i\in I$, $(j,s)\in\mI$,
\begin{equation*}
\Lambda\left( (i,-\xi_i+2),(j,s)\right) = \F_{ij}(s+\xi_i -2) = \underline{t}_i(j,s).
\end{equation*}
Thus, one had indeed, 
\begin{equation}
\tilde{L} = \Lambda_{|\It}.
\end{equation}
\end{proof}

From now on, we will use the notations $\pmb z =\{z_{i,r}, f_j\}_{(i,r)\in\mI,j\in I}$ for the initial clusters variables of both $\A_t(\Gamma^-,\Lambda)$ and $\A_t^{\underline{u},\underline{t}}(\Gamma^-,L)$.

\begin{rem}
\begin{itemize}
	\item[•] This result is natural, if we look at what the different initial cluster variables mean in terms of $\ell$-weights. From (\ref{uY}), for all $(i,r)\in\mI$, the cluster variable $u_{i,r}$ can be identified with the (commutative) dominant monomial $U_{i,r}=\prod_{\substack{k\geq 0 \\ r+2k\leq 0}}Y_{i,r+2k}$. Whereas, the quantum tori $\mathcal{Y}_t$ and $\mathcal{T}_t$ are compared via the inclusion $\mathcal{J}$ of (\ref{inclJ})
\begin{equation*}
	\mathcal{J} : Y_{i,r} \longmapsto z_{i,r}\left(z_{i,r+2}\right)^{-1}, \quad\forall (i,r)\in \I.
\end{equation*} 
Thus, the link between the variables $u_{i,r}$ and $z_{i,r}$ is the following:
\begin{equation}\label{uzf}
	u_{i,r} \equiv z_{i,r}\left(f_i\right)^{-1},
\end{equation}
with the previous convention $f_i=z_{i,-\xi_i+2}$. More precisely, there are different maps of quantum tori:
\begin{equation}\label{diagTTYY}
\xymatrix{T \ar@{.>}[r] \ar[d]_{\eta}^{\sim} & \mathcal{T}_t\\
\mathcal{Y}_t^- \ar@{^{(}->}[r] & \mathcal{Y}_t \ar[u]^{\mathcal{J}}
},
\end{equation}
where identification (\ref{uzf}) is the resulting dotted map, which we will denote by $\rho$:
\begin{equation}\label{rho}
\rho : T\to \mathcal{T}_t.
\end{equation} 
The quantum cluster algebra $\A_t^{\underline{u},\underline{t}}(\Gamma^-,L)$ was built with this identification in mind.

	\item[•] This process could also be seen as a quantum version of the multi-grading homogenization process of the seed $(\pmb u,\Gm)$, as in \citep[Lemma 7.1]{GRA3}, where the multi-grading is defined by (\ref{tildeG}).
\end{itemize}
\end{rem}

		 \subsection{Inclusion of quantum Grothendieck rings}

In the section, we prove that the quantum Grothendieck ring $K_t(\cO^+_\Z)$, or more precisely, the quantum cluster algebra $\A_t(\Gamma,\Lambda)$ contains the quantum Grothendieck ring $K_t(\CZ)$, which is the statement of Conjecture 1 in \citep{LEA2}. Recalled that in \citep{LEA2}, the ring $K_t(\cO^+_\Z)$ was defined as a completion of the quantum cluster algebra $\A_t(\Gamma,\Lambda)$, but the aim was to see it as a quantum Grothendieck ring for the category of representations $\cO^+_\Z$ from \citep{ARDRF} and \citep{CABS}. As this category contains the category $\CZ$, it was expected for the quantum Grothendieck ring $K_t(\cO^+_\Z)$ to contain $K_t(\CZ)$.

In order to prove this result we will actually prove Conjecture~2 of the same paper, which is a stronger result. We state it as follows.
\begin{theo}\label{theoconj2}
The $\qt$-characters of all fundamental representations in $\CZ$ are obtained as quantum cluster variables in the quantum cluster algebra $\A_t(\Gamma,\Lambda)$.
\end{theo}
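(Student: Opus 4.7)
The plan is to combine Theorem~\ref{theo1} and Proposition~\ref{propyirm}, which already realize the $\qt$-characters of the fundamental modules of $\CZm$ as explicit quantum cluster variables of $\A_t$, with the isomorphism of Proposition~\ref{propisoquant} and the coefficient-connected embedding $\A_t(\Gamma^-,\Lambda)\hookrightarrow\A_t(\Gamma,\Lambda)$, so as to transport these realizations into $\A_t(\Gamma,\Lambda)$; the translation invariance of the infinite quiver $\Gamma$ then lets us cover all of $\hat I$.

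Concretely, for $(i,r)\in\mI$ with $m:=(\xi_i-2-r)/2\geq h/2$, Proposition~\ref{propyirm} gives $[L(Y_{i,r})]_t=\eta(u_{i,\xi_i-2}^{(m)})$, where $u_{i,\xi_i-2}^{(m)}$ is obtained by $m$ iterations of the mutation sequence $\mu_\mathscr{S}$ applied to the initial seed of $\A_t$. Performing the same sequence of mutations in $\A_t^{\underline{u},\underline{t}}(\Gamma^-,L)$ produces a quantum cluster variable $\tilde u_{i,\xi_i-2}^{(m)}$; since the exchange relations are homogeneous with respect to the multi-grading~(\ref{tildeG}), $\tilde u_{i,\xi_i-2}^{(m)}$ equals $u_{i,\xi_i-2}^{(m)}$ up to a monomial in the frozen variables $f_j$ prescribed by its degree. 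Transporting this through Proposition~\ref{propisoquant} gives a quantum cluster variable of $\A_t(\Gamma^-,\Lambda)$, which by \citep[Theorem 4.5]{GRA1} is also one of $\A_t(\Gamma,\Lambda)$, and a chase through the commutative diagram~(\ref{diagTTYY}) identifies it in $\mathcal{T}_t$ with the image of $[L(Y_{i,r})]_t$ under $\mathcal{J}$. For an arbitrary $(i_0,r_0)\in\hat I$, including $r_0>0$, I would apply the same argument with $\Gamma^-$ replaced by its shifted analogue $\Gamma^-_N$, the subquiver of $\Gamma$ with vertex set $\{(i,r)\in\hat I\mid r\leq-\xi_i+2+2N\}$ and topmost rows frozen: because $\mathcal{N}_{ij}$ and $\mathcal{F}_{ij}$ depend only on spectral-parameter differences, the proof of Proposition~\ref{propisoquant} carries over verbatim to this shifted setting, and for $N$ large enough (say $r_0\leq-\xi_{i_0}+2N-h$) the previous step realizes $[L(Y_{i_0,r_0})]_t$ as a quantum cluster variable of $\A_t(\Gamma,\Lambda)$.

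The main obstacle is the bookkeeping in the middle step, namely verifying that each individual mutation of $\mu_\mathscr{S}$ performed in $\A_t^{\underline{u},\underline{t}}(\Gamma^-,L)$ matches the corresponding mutation in $\A_t$ under the identification $u_{i,r}\equiv z_{i,r}z_{i,-\xi_i+2}^{-1}$. This hinges on the equality $\tilde L=\Lambda_{|\It}$ established in Proposition~\ref{propisoquant}, which guarantees that the quantum exchange coefficients line up, together with careful tracking of the accumulated $f_j$-powers so that applying $\eta$ (equivalently, projecting via $\pi$ to $\Y_t^-$) recovers the formula of Proposition~\ref{propyirm}. Once this compatibility is checked at the level of a single mutation, induction on the length of the mutation sequence completes the argument.
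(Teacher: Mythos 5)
Your overall transport scheme --- realize $[L(Y_{i,r})]_t$ via Proposition~\ref{propyirm} as a cluster variable of $\A_t$, pass to the twisted algebra $\A_t^{\underline{u},\underline{t}}(\Gamma^-,L)\cong\A_t(\Gamma^-,\Lambda)$ by Proposition~\ref{propisoquant}, embed into $\A_t(\Gamma,\Lambda)$ by coefficient-connectedness, and use translation invariance to reach all of $\I$ --- is exactly the paper's skeleton (the paper handles general $(i,r)$ with the shift automorphism $z_{j,s}\mapsto z_{j,s+r_0-r}$ of $\A_t(\Gamma,\Lambda)$ in Corollary~\ref{corconj} rather than shifted ice quivers, which is equivalent). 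The gap is in the middle step. What the graded machinery of \citep{GRA2} gives (Corollary 4.7 there) is only that the cluster variable $\tilde{\chi}$ of $\A_t(\Gamma^-,\Lambda)$ at the relevant vertex equals $\rho\bigl(u_{i,-\xi_i}^{(h')}\bigr)\prod_{j\in I}f_j^{a_j}$ for some integers $a_j$; the chase through diagram (\ref{diagTTYY}) identifies it with $\mathcal{J}\left([L(Y_{i,r_0})]_t\right)$ only if $\prod_j f_j^{a_j}=1$, and this is precisely what you never prove. Saying the frozen monomial is ``prescribed by its degree'' only renames the problem: you would still have to compute that multidegree along the whole mutation sequence and show it vanishes, and it does \emph{not} vanish at intermediate steps (in the $D_4$ computation of Section~\ref{sectexplD4} the intermediate variables carry factors such as $f_1f_3f_4$, i.e.\ degree $e_1+e_3+e_4$), so the single-mutation compatibility plus induction you sketch at the end cannot work as stated: the identification $u_{j,s}\equiv z_{j,s}f_j^{-1}$ is not preserved mutation by mutation.

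The paper closes this gap by a representation-theoretic argument rather than bookkeeping. Writing $\tilde{\chi}=N/D$ with $N$ not divisible by any $f_k$ and $D$ a monomial in the mutable variables, the factor $\prod_j f_j^{a_j}$ is the minimal monomial clearing negative $f$-powers of $\rho\circ\eta^{-1}\bigl([L(Y_{i,r_0})]_t\bigr)$; one then shows that no negative powers occur, using (i) positivity of the $\qt$-characters of fundamental modules (known from the explicit computations of \citep{NtqAD} and \citep{NE8}), so every monomial of $[L(Y_{i,r_0})]_t$ already occurs in $\chi_q(L(Y_{i,r_0}))$, and (ii) Frenkel--Mukhin's description of those monomials (spectral parameters bounded by $r_0+h$, the unique highest-parameter monomial being the anti-dominant one, and right-negativity, \citep[Lemma 6.5]{CqC}), which forces the variables $Y_{j,-\xi_j}$ to occur only with negative exponents, hence only non-negative powers of the $f_j$ after applying $\rho$; the dominant monomial contributes no $f_j$ at all, so $\prod_j f_j^{a_j}=1$. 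Without this step (or a genuine substitute, e.g.\ an actual computation showing the final multidegree is $0$), your argument only yields that $\mathcal{J}\left([L(Y_{i,r})]_t\right)$ agrees with a quantum cluster variable up to a monomial in the frozen variables, which is weaker than Theorem~\ref{theoconj2}.
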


More precisely, we show that for all $i\in I$, there exists a specific finite sequence of mutations $S_i$ in $\A_t(\Gamma,\Lambda)$ such that, if applied to the initial seed $\{z_{j,s}\}_{(j,s)\in\I}$, the cluster variable sitting at vertex $(i,-\xi_i)$ is the image by $\mathcal{J}$ (of (\ref{inclJ})) of the $\qt$-character of the fundamental module $[L(Y_{i,-\xi_i-2h'})]_t$, where 
\begin{equation}
h' = \lceil \frac{h}{2} \rceil,
\end{equation} 
with $h$ the Coxeter number of the simple Lie algebra $\g$.

%For simplicity of notations, we will assume the height function $\xi$ is chosen such that $\xi_i=0$.

Let us define the sequence $S_i$. Let $(i_1,i_2,\ldots,j_n)$ be an ordering of the columns of $\Gamma$ as in (\ref{ordercol}), such that $i_1=i$ (take first all columns $j$ such that $\xi_j=\xi_i$). The sequence $S_i$ is a sequence of vertices of $\Gamma$, and more precisely of $\Gamma^-$, defined as follows. First read all vertices $(i_1,r)$ for $-2h'+2\leq r \leq 0$, from top to bottom, then all $(i_2,r)$, with $-2h'\leq r \leq 0$, and so on, then read again all vertices $(i_1,r)$ for $-2h'+4\leq r \leq 0$, and continue browsing the columns successively, until at the last step you only read the vertex $(i,-\xi_i)$.

Note that applying this sequence $S_i$ of mutations on the quivers $\Gamma^-$ or $\Gm$ has exactly the same effect on the cluster variable sitting at vertex $(i,-\xi_i)$ than applying $h'$ times the infinite sequence $\mathscr{S}$ from Section~\ref{sectsequ}.

\begin{ex}\label{exseqD4}
For $\g$ of type $D_4$, $h=6$, then $h'=3$. Let us give explicitly the sequence $S_2$ starting on column 2. For simplicity of notations, we assume that the height function is chosen such that $\xi_2=0$. Then the sequence $S_2$ has 15 steps:
\begin{multline}\label{D4steps}
S_2= (2,0),(2,-2),(2,-4),(1,-1),(1,-3),(3,-1),(3,-3),(4,-1),(4,-3),\\
(2,0),(2,-2),(1,-1)(3,-1)(4,-1)(2,0).
\end{multline}
\end{ex}
	
Let $r_0=-\xi_i-2h'$. Let $\tilde{\chi}_{i,r_0} \in \mathcal{T}_t$ be the quantum cluster variable obtained at vertex $(i,-\xi_i)$ after applying the sequence of mutations $S_i$ to the quantum cluster variable $\A_t(\Gamma^-,\Lambda)$ with initial seed $\{z_{j,s},f_k\}_{(j,s)\in\mI, k\in I}$.
\begin{prop}\label{propfinale}
As an element of the quantum torus $\mathcal{T}_t$, $\tilde{\chi}_{i,r_0}$ belongs to the image of the inclusion morphism $\mathcal{J}$. 

Moreover,
\begin{equation}
\tilde{\chi}_{i,r_0} = \mathcal{J}\left( [L(Y_{i,r_0})]_t\right).
\end{equation}
\end{prop}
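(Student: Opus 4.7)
My plan is to deduce Proposition~\ref{propfinale} from Proposition~\ref{propyirm} by transporting the latter through the isomorphism of Proposition~\ref{propisoquant}.

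First, I will use Proposition~\ref{propisoquant} to identify $\A_t(\Gamma^-,\Lambda)$ with the twisted quantum cluster algebra $\A_t^{\underline{u},\underline{t}}(\Gamma^-,L)$, under the correspondence $z_{j,s}\leftrightarrow\tilde u_{j,s}=u_{j,s}f_j$ and $f_k\leftrightarrow f_k$. Forgetting the multigrading, i.e.\ sending $\tilde u_{j,s}\mapsto u_{j,s}$ and $f_k\mapsto 1$, projects $\A_t^{\underline{u},\underline{t}}(\Gamma^-,L)$ onto $\A_t=\A_t(\Gm,L)$. Since the two quantum cluster algebras share the same exchange matrix, this projection intertwines mutations, so applying $S_i$ in $\A_t(\Gamma^-,\Lambda)$ corresponds, through the isomorphism, to a sequence of mutations in $\A_t^{\underline{u},\underline{t}}$ that projects onto the same sequence of mutations in $\A_t$. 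As noted in the excerpt just before the statement, the effect on the cluster variable at vertex $(i,-\xi_i)$ is precisely that of $h'$ iterations of the infinite sequence $\mathscr{S}$ of Section~\ref{sectsequ}.

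Second, I will invoke Proposition~\ref{propyirm} with $m=h'$: in $\A_t$, the cluster variable $u_{i,-\xi_i}^{(h')}$ satisfies $\eta(u_{i,-\xi_i}^{(h')})=[W_{k_{i,-\xi_i},r_0}^{(i)}]_t^-$. At the top of its column in $\Gm$, the initial variable $u_{i,-\xi_i}$ corresponds under $\eta$ to $U_{i,-\xi_i}=Y_{i,-\xi_i}$, a single factor, so the Kirillov--Reshetikhin module in question is the fundamental module $L(Y_{i,r_0})$. Moreover, since $2h'\geq h$, the second assertion of Proposition~\ref{propyirm} yields that the truncated and full $\qt$-characters coincide: $[L(Y_{i,r_0})]_t^-=[L(Y_{i,r_0})]_t\in\Y_t^-\subset\Y_t$.

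Third, I will appeal to the commutative diagram~(\ref{diagTTYY}), which factors $\rho:T\to\mathcal{T}_t$ as $\mathcal{J}\circ\iota\circ\eta$, with $\iota:\Y_t^-\hookrightarrow\Y_t$. Therefore
\begin{equation*}
\rho\bigl(u_{i,-\xi_i}^{(h')}\bigr)=\mathcal{J}\bigl([L(Y_{i,r_0})]_t\bigr)\in\mathcal{T}_t.
\end{equation*}
It remains to identify $\tilde\chi_{i,r_0}$ with $\rho(u_{i,-\xi_i}^{(h')})$. I expect this step to be the main obstacle: cluster variables in $\A_t$ and in $\A_t^{\underline{u},\underline{t}}$ differ by $f$-factors prescribed by the multigrading $\tilde G$ of~(\ref{tildeG}), and the twisted $t$-commutations~(\ref{newtcommfj}) produce different $t$-coefficients in the exchange relations. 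I would handle this by running the two mutation sequences in parallel and showing inductively that, after each mutation of $S_i$, the cluster variable in $\A_t^{\underline{u},\underline{t}}$ mapped into $\mathcal{T}_t$ via $\tilde u_{j,s}\mapsto z_{j,s}$ coincides with the $\rho$-image of its $\A_t$-counterpart. The multigrading makes the correspondence rigid: a homogeneous element of $\A_t^{\underline{u},\underline{t}}$ is determined by its image under the projection $f_k\mapsto 1$ together with its $\Z^I$-degree, and the required compatibility is built into the construction~(\ref{newtildeu})--(\ref{newtcommfj}) of the twist. Applied to the cluster variable at $(i,-\xi_i)$ after $S_i$, this gives $\tilde\chi_{i,r_0}=\rho(u_{i,-\xi_i}^{(h')})=\mathcal{J}([L(Y_{i,r_0})]_t)$, and in particular $\tilde\chi_{i,r_0}$ lies in the image of $\mathcal{J}$.
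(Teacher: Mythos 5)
Your first three steps coincide with the paper's: identify $\A_t(\Gamma^-,\Lambda)$ with $\A_t^{\underline{u},\underline{t}}(\Gamma^-,L)$ via Proposition~\ref{propisoquant}, use the bijection of cluster variables coming from the grading construction, apply Proposition~\ref{propyirm} with $m=h'$ (so that the truncated and full $\qt$-characters agree), and pass through the diagram~(\ref{diagTTYY}) to get $\rho\bigl(u_{i,-\xi_i}^{(h')}\bigr)=\mathcal{J}\bigl([L(Y_{i,r_0})]_t\bigr)$. But the step you yourself flag as the main obstacle is where the proposal breaks down. The bijection of cluster variables only gives $\tilde\chi_{i,r_0}=\rho\bigl(u_{i,-\xi_i}^{(h')}\bigr)\prod_{j\in I}f_j^{a_j}$ for some integers $a_j$, and since $\rho(u_{j,s})=z_{j,s}f_j^{-1}$ has degree $0$ while $\deg(f_j)=e_j$, the claim $\prod_j f_j^{a_j}=1$ is exactly the claim that the multidegree of $\tilde\chi_{i,r_0}$ vanishes. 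Your proposed induction — that after \emph{each} mutation of $S_i$ the twisted cluster variable, read in $\mathcal{T}_t$, equals the $\rho$-image of its $\A_t$-counterpart — is false: intermediate variables do acquire nontrivial frozen factors and nonzero degrees. The paper's own $D_4$ computation shows this at the very first mutation, where $z_{2,0}^{(1)}=\mathcal{J}\bigl(Y_{2,-2}+Y_{1,-1}Y_{3,-1}Y_{4,-1}Y_{2,0}^{-1}\bigr)f_1f_3f_4$ and $\deg\bigl(z_{2,0}^{(1)}\bigr)=e_1+e_3+e_4$; only the final variable has degree $0$. So the compatibility is not "built into the construction" of the twist, and the rigidity statement you invoke (a homogeneous element is determined by its $f\mapsto 1$ image and its degree) is circular here, since it presupposes knowing that the degree of $\tilde\chi_{i,r_0}$ is zero.

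What is genuinely needed — and what the paper supplies — is an argument that $\rho\circ\eta^{-1}\bigl([L(Y_{i,r_0})]_t\bigr)$ contains no negative powers of the $f_j$, so that the minimal correcting monomial $\prod_j f_j^{a_j}$ is $1$. This uses representation-theoretic input: the positivity of the $\qt$-characters of fundamental modules (so every Laurent monomial of $[L(Y_{i,r_0})]_t$ already occurs in $\chi_q(L(Y_{i,r_0}))$), the Frenkel--Mukhin bound that all monomials involve only $Y_{j,s}^{\pm1}$ with $s\le r_0+h\le 0$ with the anti-dominant monomial $Y_{\overline{i},r_0+h}^{-1}$ at the extreme parameter, and right-negativity to control monomials containing $Y_{j,r_0+h-1}$; only the variables $Y_{j,s}$ with $s+2>0$ produce $f_j^{\mp1}$ under $\rho\circ\eta^{-1}$, and these occur only with negative exponents, hence only non-negative powers of $f_j$ appear. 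Alternatively one could try to prove $\deg(\tilde\chi_{i,r_0})=0$ by tracking the multigrading combinatorially along the whole sequence $S_i$, but that is a nontrivial computation you have not carried out; as it stands, the proposal has a genuine gap at precisely the point that makes the proposition nontrivial.
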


\begin{proof}
The cluster variable $\tilde{\chi}_{i,r_0}$ is a variable of the quantum cluster algebra $\A_t(\Gamma^-,\Lambda)$, which is isomorphic to $\A_t^{\underline{u},\underline{t}}(\Gamma^-,L)$ from Proposition~\ref{propisoquant}. By \citep[Corollary 4.7]{GRA2}, there is a bijection between the quantum cluster variables of $\A_t^{\underline{u},\underline{t}}(\Gamma^-,L)$ and those of $\A_t(\Gamma^-,L)$.

With notations from Section~\ref{secttruncq}, $u_{i,-\xi_i}^{(h')}$ is the cluster variable of $\A_t(\Gamma^-,L)$ obtained at vertex $(i,-\xi_i)$ after applying the mutations of the sequence $S_i$. Also from \citep[Corollary 4.7]{GRA2}, we know that there exists integers $a_j\in \Z$ such that
\begin{equation}\label{tildechiuf}
\tilde{\chi}_{i,r_0} =\rho\left(u_{i,-\xi_i}^{(h')}\right)\prod_{j\in I}f_j^{a_j},
\end{equation}
written as a commutative product (both $\tilde{\chi}_{i,r_0}$ and $u_{i,-\xi_i}^{(h')}$ are bar-invariant), with $\rho$ defined in (\ref{rho}). The term $u_{i,-\xi_i}^{(h')}$ is a Laurent polynomial in the variables $u_{j,s}$, which satisfy $\rho(u_{j,s})=z_{j,s}(f_j)^{-1}$ from (\ref{uzf}). Thus expression (\ref{tildechiuf}) is a way of writing $\tilde{\chi}_{i,r_0}$ as a Laurent polynomial in the initial variables $\{z_{j,s},f_k\}$. However, one can write $\tilde{\chi}_{i,r_0}=N/D$, where $N$ is the Laurent polynomial in the cluster variables $\{z_{j,s},f_k\}$, with coefficients in $\Z[t^{\pm 1/2}]$ and not divisible by any of the $f_k$, and $D$ is a monomial in the non-frozen variables $\{z_{j,s}\}$. Thus $\prod_{j\in I}f_j^{a_j}$ is the smallest monomial such that 
\begin{equation*}
\rho\left(u_{i,-\xi_i}^{(h')}\right)\prod_{j\in I}f_j^{a_j}
\end{equation*}
contains only non-negative powers of the frozen variables $f_k$.
%From the theory of $F$-polynomials in \citep{CA4}, that were proven to exists for quantum cluster algebras in \citep{FpqCA},

Moreover, from Proposition~\ref{propyirm}, 
\begin{equation}
\eta(u_{i,-\xi_i}^{(h')}) = w_{i,-\xi_i}^{(h')} = [L(Y_{i,r_0})]_t.
\end{equation}

However, all Laurent monomials occurring in $[L(Y_{i,r_0})]_t$ already occurred in the $q$-character $\chi_q(L(Y_{i,r_0}))$, as the $\qt$-character $[L(Y_{i,r_0})]_t$ has positive coefficients. Indeed, the $\qt$-characters of fundamental modules have been explicitly computed and have been found to have non-negative coefficients (in \citep{NtqAD} for types $A$ and $D$ and \citep{NE8} for type $E$).

From \citep{CqC}, all monomials in $\chi_q(L(Y_{i,r_0}))$ are products of $Y_{j,s}^{\pm 1}$, with $s\leq r_0+h$, but by definition of $r_0$, $r_0+h \leq 0$, and the term with the highest quantum parameter being the anti-dominant monomial $Y_{\overline{i},r_0+h}^{-1}$, where $\overline{\phantom{A}}: I\to I$ is the involutive map such that $\omega_0(\alpha_j)=-\alpha_{\overline{j}}$, with $\omega_0$ the longest element of the Weyl group of $\g$ (no relation with the bar-involution of Section~\ref{sectbar}).

Consider the change of variables, for $(j,s)\in \mI$,
\begin{equation}
y_{j,s} = \eta^{-1}(Y_{j,s})= \left\lbrace\begin{array}{ll}
		\frac{u_{j,s}}{u_{j,s+2}}, & \text{if } s+2 \leq 0,\\
		u_{j,s}, & \text{otherwise}
\end{array}\right..
\end{equation}
Thus 
\begin{equation}
\rho(y_{j,s}) = \left\lbrace\begin{array}{ll}
		\frac{z_{j,s}}{z_{j,s+2}}, & \text{if } s+2 \leq 0,\\
		\frac{z_{j,s}}{f_j}, & \text{otherwise}
\end{array}\right..
\end{equation}
All monomials occurring in $[L(Y_{i,r_0})]_t$ are commutative monomials in the variables $Y_{j,s}^{\pm 1}$, with $s\leq r_0 +h \leq 0$. Moreover, the only monomials in which the variables $Y_{j,s}^{\pm 1}$, with $s+2 \geq 0$, occur are the anti-dominant monomial $Y_{\overline{i},r_0+h}^{-1}$, and any possible monomial in which some variable $Y_{j,r_0+h-1}$ occurs. But for such a monomial $m$, the variable $Y_{j,s}^{\pm 1}$ with the highest $s$ in $m$ occurs with a negative power in $m$ (the monomial $m$ is "right-negative", as from \citep[Lemma 6.5]{CqC}), thus the variable $Y_{j,r_0+h-1}$ also occurs with a negative power.

The image by $\rho\circ\eta^{-1}$ of any monomial in the variables $Y_{j,s}^{\pm 1}$, with $s+2\leq 0$ is a monomial in the variables $\{z_{j,s}^{\pm 1}\}$ (without frozen variables). Thus, the image
\begin{equation*}
\rho\left(u_{i,-\xi_i}^{(h')}\right)= \rho\circ\eta^{-1}\left( [L(Y_{i,r_0})]_t\right)
\end{equation*}
is a Laurent polynomial with only positive powers of the variables $f_j$. 

Necessarily, $\prod_{j\in I}f_j^{a_j}=1$, and (\ref{tildechiuf}) becomes
\begin{equation*}
\tilde{\chi}_{i,r_0} =\rho\left(u_{i,-\xi_i}^{(h')}\right).
\end{equation*}
Finally, from the diagram (\ref{diagTTYY}), 
\begin{equation*}
\tilde{\chi}_{i,r_0} =\rho\left(u_{i,-\xi_i}^{(h')}\right) = \mathcal{J}\left( \eta\left(u_{i,-\xi_i}^{(h')}\right)\right) = \mathcal{J}\left( [L(Y_{i,r_0})]_t\right),
\end{equation*}
which concludes the proof.
\end{proof}

\begin{rem}
At some point in the proof we used the fact that the $\qt$-characters of the fundamental modules had non-negative coefficients. Note that this part of the proof could easily be extended to non-simply laced types, as the $\qt$-characters of their fundamental representations have also be explicitly computed, and also have non-negative coefficients (in types $B$ and $C$, the $\qt$-characters of all fundamental representations are equal to their respective $q$-character, and all coefficients are actually equal to 1 \citep[Proposition 7.2]{MqtC}, and see \citep{AAqtC} for type $G_2$ and \citep{MqtC} for type $F_4$). 
\end{rem}

\begin{cor}\label{corconj}
For all $(i,r)\in \I$ there exists a quantum cluster variable $\tilde{\chi}_{i,r}$ in the quantum cluster algebra $\A_t(\Gamma, \Lambda)$ such that 
\begin{equation*}
\tilde{\chi}_{i,r} = \mathcal{J}\left( [L(Y_{i,r})]_t\right).
\end{equation*}
\end{cor}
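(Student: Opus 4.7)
The plan is to derive the general statement from Proposition~\ref{propfinale}, which handles the specific value $r_0 = -\xi_i - 2h'$, by exploiting the translation symmetry of the bi-infinite quiver $\Gamma$. The key observation is that all of the relevant structures—the quiver $\Gamma$, the matrix $\Lambda$, the inclusion $\mathcal{J}$, and the $\qt$-characters themselves—are equivariant under the shift of spectral parameter by an even integer.

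First I would establish the shift symmetry on the cluster algebra side. Arrows in $\Gamma$ are determined by the condition $s = r \pm C_{ij}$ and $\Lambda_{(i,r),(j,s)} = \mathcal{F}_{ij}(s-r)$ depends only on $s-r$; therefore, for any $k \in \mathbb{Z}$, the map $\sigma_k : (j,s) \mapsto (j, s+2k)$ is an automorphism of the pair $(\Gamma, \Lambda)$. It induces a $\mathbb{Z}[t^{\pm 1/2}]$-algebra automorphism $\sigma_k^\ast$ of the quantum torus $\mathcal{T}_t$ via $z_{j,s} \mapsto z_{j,s+2k}$, which permutes the initial cluster $\{z_{j,s}\}_{(j,s)\in \hat I}$ and commutes with mutations. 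Hence $\sigma_k^\ast$ restricts to an automorphism of $\A_t(\Gamma,\Lambda)$ that maps quantum cluster variables to quantum cluster variables.

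Next I would check compatibility of this shift with the spectral-parameter shift on the $\qt$-character side. Let $\tau_k : Y_{j,s} \mapsto Y_{j,s+2k}$ denote the corresponding shift on $\mathcal{Y}_t$. Inspection of the formula (\ref{inclJ}) shows immediately that $\sigma_k^\ast \circ \mathcal{J} = \mathcal{J} \circ \tau_k$. Moreover, the $t$-commutation relations (\ref{relY}) and the defining properties of $[L(m)]_t$ from Theorem~\ref{theoN} are invariant under $\tau_k$, so $\tau_k [L(Y_{i,r})]_t = [L(Y_{i,r+2k})]_t$.

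Finally, given $(i,r) \in \hat I$, since $r \equiv \xi_i \pmod{2}$ and $r_0 = -\xi_i - 2h' \equiv \xi_i \pmod{2}$, there exists a unique $k \in \mathbb{Z}$ with $r = r_0 + 2k$. Proposition~\ref{propfinale} provides a quantum cluster variable $\tilde{\chi}_{i,r_0} = \mathcal{J}([L(Y_{i,r_0})]_t)$ in $\A_t(\Gamma^-,\Lambda) \subset \A_t(\Gamma,\Lambda)$. Setting $\tilde{\chi}_{i,r} := \sigma_k^\ast(\tilde{\chi}_{i,r_0})$ produces a quantum cluster variable of $\A_t(\Gamma,\Lambda)$ satisfying
$$\tilde{\chi}_{i,r} \;=\; \sigma_k^\ast\,\mathcal{J}\!\left([L(Y_{i,r_0})]_t\right) \;=\; \mathcal{J}\!\left(\tau_k [L(Y_{i,r_0})]_t\right) \;=\; \mathcal{J}\!\left([L(Y_{i,r})]_t\right),$$
which is the claim. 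The main point requiring care is confirming that the quiver automorphism $\sigma_k$ genuinely extends to a cluster algebra automorphism of $\A_t(\Gamma,\Lambda)$; this is essentially a formal consequence of the fact that quantum mutation is defined purely in terms of the data $(B,\Lambda)$, but it does depend on $(\Gamma,\Lambda)$ being bi-infinite, which is why we pass from $\A_t(\Gamma^-,\Lambda)$ to the full $\A_t(\Gamma,\Lambda)$ before applying the shift.
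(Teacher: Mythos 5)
Your proposal is correct and is essentially the paper's own argument: the paper likewise deduces the general case from Proposition~\ref{propfinale} by using the invariance of $\A_t(\Gamma,\Lambda)$ under the spectral-parameter shift $z_{j,s}\mapsto z_{j,s+r_0-r}$, together with the compatibility of this shift with $\mathcal{J}$ and with the $\qt$-characters. The only cosmetic difference is that the paper defines $\tilde{\chi}_{i,r}$ via the mutation sequence $S_i$ started at the shifted vertex $(i,r+2h')$ and then identifies it with the shift of $\tilde{\chi}_{i,r_0}$, whereas you define it directly as the image of $\tilde{\chi}_{i,r_0}$ under the shift automorphism.
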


\begin{proof}
For all $(i,r)\in \I$, let $\tilde{\chi}_{i,r}$ be the cluster variable of the quantum cluster algebra $\A_t(\Gamma, \Lambda)$ obtained at vertex $(i,r+2h')$ after applying the sequence of mutations $S_i$, but starting at vertex $(i,r+2h')$ instead of $(i,-\xi_i)$. 

Consider the change of variables in $\A_t(\Gamma, \Lambda)$:
\begin{equation}
	s: z_{j,s} \longmapsto z_{j,s+r_0-r} , \quad \forall (j,s)\in \I.
\end{equation}
The quantum cluster algebra $\A_t(\Gamma, \Lambda)$ is invariant under this shift $s$, and this change of variables is clearly invertible ($s^{-1}(z_{j,s})=z_{j,s+r-r_0}$). One has $s(z_{i,r}) = z_{i,r_0}$, and $s(z_{i,r+2h'})=z_{i,-\xi_i}$, thus $s\left(\tilde{\chi}_{i,r}\right)= \tilde{\chi}_{i,r_0}$, and from Proposition~\ref{propfinale},
\begin{equation*}
\tilde{\chi}_{i,r} = s^{-1}\left( \tilde{\chi}_{i,r_0} \right)= s^{-1}\left( \mathcal{J}\left( [L(Y_{i,r_0})]_t\right) \right). 
\end{equation*}
However, from the definition of the map $\mathcal{J}$ in (\ref{inclJ}), the shift $s$ also acts as a change of variables in the quantum torus $\mathcal{Y}_t$, $s : Y_{j,u} \longmapsto Y_{j,u+r_0-r}$. Hence,
\begin{equation*}
\tilde{\chi}_{i,r} = \mathcal{J}\left( [L(Y_{i,r})]_t\right).
\end{equation*}
 \end{proof}

Thus we have proven Theorem~\ref{theoconj2}.

\begin{rem}\label{remcomplexity}
\begin{itemize}
	\item[•] When Conjecture~2 was formulated in \citep{LEA2}, a recent positivity result of Davison \citep{PQCA} was mentioned there. This work proves the so-called "positivity conjecture" for quantum cluster algebras, which states that the coefficients of the Laurent polynomials into which the cluster variables decompose from the Laurent phenomenon are in fact non-negative. This is an important result, but also a difficult one, and it is not actually needed in order to obtain our result.
	
	\item[•] One can note from this proof that we know a close bound on the number of mutations needed in order to compute the $\qt$-character of a fundamental module. For $\g$ a simple-laced simple Lie algebra of rank $n$ and of (dual) Coxeter number $h$, if $h'=\lceil h/2\rceil$, then the number of steps is lower than
\begin{equation}\label{boundsteps}
		n\frac{h'(h'+1)}{2}.
\end{equation}
We have chosen to go into details on the number of steps required for this process because it made sense from an algorithmic point of view to know its complexity. 

One can compare this algorithm to Frenkel-Mukhin \citep{CqC} to compute $q$-characters. As explained in \citep{NE8}, when trying to compute $q$-characters of fundamental representations of large dimension (for example, in type $E_8$, the $q$-character of the $5$th fundamental representation has approximately $6.4\times 2^{30}$ monomials), one encounters memory issues. Indeed, this algorithm has to keep track of all the previously computed terms. This advantage of the cluster algebra approach is that one only had to keep the seed in memory.
\end{itemize}
\end{rem}

	\section{Explicit computation in type $D_4$}\label{sectexplD4}

For $\g$ of type $D_4$, with the height function $\xi_2 = 0, \xi_1=\xi_3=\xi_4=1$, we compute the $\qt$-character of the fundamental representation $L(Y_{2,-6})$ as a quantum cluster variable of the quantum cluster algebra $\A_t(\Gamma^-,\Lambda)$, using the algorithm presented in the previous Section.

From Example~\ref{exseqD4}, the sequence of mutation we have to apply to the initial seed is
\begin{multline*}
S_2= (2,0),(2,-2),(2,-4),(1,-1),(1,-3),(3,-1),(3,-3),(4,-1),(4,-3),\\
(2,0),(2,-2),(1,-1)(3,-1)(4,-1)(2,0).
\end{multline*} 
One can notice that the required number of steps is indeed lower than 24, which was the bound given in (\ref{boundsteps}).

Let us give explicitly the mutations on the quiver $\Gamma^-$ (which encodes more than $\Gm$), as well as the quantum cluster variables obtained at (almost every) step. We give the quantum cluster variables as Laurent polynomials in the variables $\{z_{i,r},f_j\}_{(i,r)\in \I, j\in I}$, as well as in the form (\ref{tildechiuf}). For completeness, we also compute the multi-degrees of the quantum cluster variables.

This computation was done thanks to the latest version (as of January 2019) of Bernhard Keller's \href{https://webusers.imj-prg.fr/~bernhard.keller/quivermutation/}{\color{blue} wonderful quiver mutation applet}.
%\begin{center}
%\href{https://webusers.imj-prg.fr/~bernhard.keller/quivermutation/}.
%\end{center}
\vspace*{2em}

\begin{minipage}[c]{10cm}
\begin{center}
\begin{tikzpicture}[scale=0.8]
\path[use as bounding box] (-2, 2.5) rectangle (7, -6.5);
\node (1) at (2,2) {\boxed{$(2,2)$}};
\node (2) at (0,1) {\boxed{$(1,1)$}};
\node (3) at (4,1) {\boxed{$(3,1)$}};
\node (4) at (6,1) {\boxed{$(4,1)$}};

\node (5) at (2,0) {$(2,0)$};
\node (6) at (0,-1) {$(1,-1)$};
\node (7) at (4,-1) {$(3,-1)$};
\node (8) at (6,-1) {$(4,-1)$};

\node (9) at (2,-2) {$(2,-2)$};
\node (10) at (0,-3) {$(1,-3)$};
\node (11) at (4,-3) {$(3,-3)$};
\node (12) at (6,-3) {$(4,-3)$};

\node (13) at (2,-4) {$(2,-4)$};
\node (14) at (0,-5) {$(1,-5)$};
\node (15) at (4,-5) {$(3,-5)$};
\node (16) at (6,-5) {$(4,-5)$};

\node (17) at (2,-6) {$(2,-6)$};
\node (box) [draw=red,rounded corners,fit = (5)] {};

\draw [->] (5) edge (1);
\draw [->] (9) edge (5);
\draw [->] (13) edge (9);
\draw [->] (17) edge (13);

\draw [->] (6) edge (2);
\draw [->] (10) edge (6);
\draw [->] (14) edge (10);

\draw [->] (7) edge (3);
\draw [->] (11) edge (7);
\draw [->] (15) edge (11);

\draw [->] (8) edge (4);
\draw [->] (12) edge (8);
\draw [->] (16) edge (12);

\draw [->] (5) edge (6);
\draw [->] (5) edge (7);
\draw [->] (5) edge (8);

\draw [->] (9) edge (10);
\draw [->] (9) edge (11);
\draw [->] (9) edge (12);

\draw [->] (13) edge (14);
\draw [->] (13) edge (15);
\draw [->] (13) edge (16);

\draw [->] (2) edge (5);
\draw [->] (3) edge (5);
\draw [->] (4) edge (5);

\draw [->] (6) edge (9);
\draw [->] (7) edge (9);
\draw [->] (8) edge (9);

\draw [->] (10) edge (13);
\draw [->] (11) edge (13);
\draw [->] (12) edge (13);

\draw [->] (14) edge (17);
\draw [->] (15) edge (17);
\draw [->] (16) edge (17);
\end{tikzpicture}
\end{center}
\end{minipage}	
\begin{minipage}[c]{4cm}
Mutation at $(2,0)$.

\begin{equation*}
\deg\left( z_{2,0}^{(1)} \right) = e_1 + e_3 + e_4 .
\end{equation*}

\end{minipage}

\begin{align*}
z_{2,0}^{(1)} &  = z_{2,-2}\left(z_{2,0}\right)^{-1}f_1 f_3 f_4  + z_{1,-1}z_{3,-1}z_{4,-1}\left(z_{2,0}\right)^{-1}f_2,\\
= &  \mathcal{J}\left(Y_{2,-2} + Y_{1,-1}Y_{3,-1}Y_{4,-1}Y_{2,0}^{-1}\right)f_1 f_3 f_4.
\end{align*}

\vspace*{1em}

\begin{minipage}{10cm}
\begin{center}
\begin{tikzpicture}[scale=0.8]
\path[use as bounding box] (-2, 2.2) rectangle (7, -6.5);
\node (1) at (2,2) {\boxed{$(2,2)$}};
\node (2) at (0,1) {\boxed{$(1,1)$}};
\node (3) at (4,1) {\boxed{$(3,1)$}};
\node (4) at (6,1) {\boxed{$(4,1)$}};

\node (5) at (2,0) {$(2,0)$};
\node (6) at (0,-1) {$(1,-1)$};
\node (7) at (4,-1) {$(3,-1)$};
\node (8) at (6,-1) {$(4,-1)$};

\node (9) at (2,-2) {$(2,-2)$};
\node (10) at (0,-3) {$(1,-3)$};
\node (11) at (4,-3) {$(3,-3)$};
\node (12) at (6,-3) {$(4,-3)$};

\node (13) at (2,-4) {$(2,-4)$};
\node (14) at (0,-5) {$(1,-5)$};
\node (15) at (4,-5) {$(3,-5)$};
\node (16) at (6,-5) {$(4,-5)$};

\node (17) at (2,-6) {$(2,-6)$};

\node (box) [draw=red,rounded corners,fit = (9)] {};

\draw [->] (1) edge (5);
\draw [->] (5) edge (9);
\draw [->] (13) edge (9);
\draw [->] (17) edge (13);

%\draw [->] (6) edge (2);
\draw [->] (10) edge (6);
\draw [->] (14) edge (10);

%\draw [->] (7) edge (3);
\draw [->] (11) edge (7);
\draw [->] (15) edge (11);

%\draw [->] (8) edge (4);
\draw [->] (12) edge (8);
\draw [->] (16) edge (12);

\draw [->] (6) edge (5);
\draw [->] (7) edge (5);
\draw [->] (8) edge (5);

\draw [->] (9) edge (10);
\draw [->] (9) edge (11);
\draw [->] (9) edge (12);

\draw [->] (13) edge (14);
\draw [->] (13) edge (15);
\draw [->] (13) edge (16);

\draw [->] (5) edge (2);
\draw [->] (5) edge (3);
\draw [->] (5) edge (4);

%\draw [->] (6) edge (9);
%\draw [->] (7) edge (9);
%\draw [->] (8) edge (9);

\draw [->] (10) edge (13);
\draw [->] (11) edge (13);
\draw [->] (12) edge (13);

\draw [->] (14) edge (17);
\draw [->] (15) edge (17);
\draw [->] (16) edge (17);

\draw [->] (9) to[out=120, in=240, looseness=1.2] (1);

\draw [->] (2) to[out=270, in=180, looseness=0.8] (7);
\draw [->] (2) to[out=350, in=150, looseness=0.8] (8);

\draw [->] (3) to[out=270, in=0, looseness=0.8] (6);
\draw [->] (3) to (8);

\draw [->] (4) to[out=220, in=0, looseness=0.8] (6);
\draw [->] (4) to (7);
\end{tikzpicture}
\end{center}
\end{minipage}	
\begin{minipage}[c]{4.2cm}
Mutation at $(2,-2)$.
\vspace*{1em}

\begin{equation*}
\deg\left( z_{2,-2}^{(1)} \right) = e_1 + e_3 + e_4.
\end{equation*}
\end{minipage}

\begin{align*}
z_{2,-2}^{(1)} & = z_{2,-4}\left(z_{2,0}\right)^{-1}f_1 f_3 f_4  + z_{1,-1}z_{3,-1}z_{4,-1}z_{2,-4}\left(z_{2,-2}\right)^{-1} \left(z_{2,0}\right)^{-1} f_2 \\
 & + z_{1,-3}z_{3,-3}z_{4,-3}\left(z_{2,-2}\right)^{-1}  f_2,  \\
z_{2,-2}^{(1)} & = \mathcal{J}\left(Y_{2,-4}Y_{2,-2} + Y_{1,-1}Y_{2,-4}\left(Y_{2,0}\right)^{-1}Y_{3,-1}Y_{4,-1} \right. \\
 & \left.+ Y_{1,-3}Y_{1,-1}\left(Y_{2,-2}\right)^{-1}\left(Y_{2,0}\right)^{-1}Y_{3,-3}Y_{3,-1}Y_{4,-3}Y_{4,-1}\right)f_1 f_3 f_4.
\end{align*}

\vspace*{5em}

\begin{minipage}{10cm}
\begin{center}
\begin{tikzpicture}[scale=0.8]
\path[use as bounding box] (-2, 2.5) rectangle (7, -6);
\node (1) at (2,2) {\boxed{$(2,2)$}};
\node (2) at (0,1) {\boxed{$(1,1)$}};
\node (3) at (4,1) {\boxed{$(3,1)$}};
\node (4) at (6,1) {\boxed{$(4,1)$}};

\node (5) at (2,0) {$(2,0)$};
\node (6) at (0,-1) {$(1,-1)$};
\node (7) at (4,-1) {$(3,-1)$};
\node (8) at (6,-1) {$(4,-1)$};

\node (9) at (2,-2) {$(2,-2)$};
\node (10) at (0,-3) {$(1,-3)$};
\node (11) at (4,-3) {$(3,-3)$};
\node (12) at (6,-3) {$(4,-3)$};

\node (13) at (2,-4) {$(2,-4)$};
\node (14) at (0,-5) {$(1,-5)$};
\node (15) at (4,-5) {$(3,-5)$};
\node (16) at (6,-5) {$(4,-5)$};

\node (17) at (2,-6) {$(2,-6)$};

\node (box) [draw=red,rounded corners,fit = (13)] {};

%\draw [->] (1) edge (5);
\draw [->] (9) edge (5);
\draw [->] (9) edge (13);
\draw [->] (17) edge (13);

%\draw [->] (6) edge (2);
\draw [->] (10) edge (6);
\draw [->] (14) edge (10);

%\draw [->] (7) edge (3);
\draw [->] (11) edge (7);
\draw [->] (15) edge (11);

%\draw [->] (8) edge (4);
\draw [->] (12) edge (8);
\draw [->] (16) edge (12);

\draw [->] (6) edge (5);
\draw [->] (7) edge (5);
\draw [->] (8) edge (5);

\draw [->] (10) edge (9);
\draw [->] (11) edge (9);
\draw [->] (12) edge (9);

\draw [->] (13) edge (14);
\draw [->] (13) edge (15);
\draw [->] (13) edge (16);

\draw [->] (5) edge (2);
\draw [->] (5) edge (3);
\draw [->] (5) edge (4);

%\draw [->] (6) edge (9);
%\draw [->] (7) edge (9);
%\draw [->] (8) edge (9);

%\draw [->] (10) edge (13);
%\draw [->] (11) edge (13);
%\draw [->] (12) edge (13);

\draw [->] (14) edge (17);
\draw [->] (15) edge (17);
\draw [->] (16) edge (17);

\draw [->] (1) to[out=240, in=120, looseness=1.2] (9);

\draw [->] (2) to[out=270, in=180, looseness=0.8] (7);
\draw [->] (2) to[out=350, in=150, looseness=0.8] (8);

\draw [->] (3) to[out=270, in=0, looseness=0.8] (6);
\draw [->] (3) to (8);

\draw [->] (4) to[out=220, in=0, looseness=0.8] (6);
\draw [->] (4) to (7);

\draw [->] (5) to (10);
\draw [->] (5) to (11);
\draw [->] (5) to (12);

\draw [->] (13) to[out=200, in=160, looseness=1.5] (1);
\end{tikzpicture}
\end{center}
\end{minipage}
\begin{minipage}[c]{4.2cm}
Mutation at $(2,-4)$.
\vspace*{1em}

\begin{equation}
\deg\left( z_{2,-4}^{(1)} \right) = e_1 + e_3 + e_4.
\end{equation}
\end{minipage}

\vspace*{1em}

\begin{align*}
	z_{2,-4}^{(1)} & = z_{2,-6}\left(z_{2,0}\right)^{-1}f_1 f_3 f_4 + z_{1,-1}z_{3,-1}z_{4,-1}z_{2,-6}\left(z_{2,-2}\right)^{-1} \left(z_{2,0}\right)^{-1} f_2\\
 	& + z_{1,-3}z_{3,-3}z_{4,-3}z_{2,-6}\left(z_{2,-4}\right)^{-1}\left(z_{2,-2}\right)^{-1}  f_2 + z_{1,-5}z_{3,-5}z_{4,-5}\left(z_{2,-4}\right)^{-1}f_2,\\
 z_{2,-4}^{(1)} & = \mathcal{J}\left( Y_{2,-6}Y_{2,-4}Y_{2,-2} + Y_{1,-1}Y_{2,-6}Y_{2,-4}\left(Y_{2,0}\right)^{-1}Y_{3,-1}Y_{4,-1} \right. \\
 	& + Y_{1,-3}Y_{1,-1}Y_{2,-6}\left(Y_{2,-2}\right)^{-1}\left(Y_{2,0}\right)^{-1}Y_{3,-3}Y_{3,-1}Y_{4,-3}Y_{4,-1}  \\
 	& \left. + Y_{1,-5}Y_{1,-3}Y_{1,-1}\left(Y_{2,-4}\right)^{-1}\left(Y_{2,-2}\right)^{-1}Y_{3,-5}Y_{3,-3}Y_{3,-1}Y_{4,-5}Y_{4,-3}Y_{4,-1}\right)f_1 f_3 f_4.
\end{align*}

%\vspace*{5em}

\begin{minipage}{10cm}
\begin{center}
\begin{tikzpicture}[scale=0.8]
\path[use as bounding box] (-2, 3) rectangle (7, -7);
\node (1) at (2,2) {\boxed{$(2,2)$}};
\node (2) at (0,1) {\boxed{$(1,1)$}};
\node (3) at (4,1) {\boxed{$(3,1)$}};
\node (4) at (6,1) {\boxed{$(4,1)$}};

\node (5) at (2,0) {$(2,0)$};
\node (6) at (0,-1) {$(1,-1)$};
\node (7) at (4,-1) {$(3,-1)$};
\node (8) at (6,-1) {$(4,-1)$};

\node (9) at (2,-2) {$(2,-2)$};
\node (10) at (0,-3) {$(1,-3)$};
\node (11) at (4,-3) {$(3,-3)$};
\node (12) at (6,-3) {$(4,-3)$};

\node (13) at (2,-4) {$(2,-4)$};
\node (14) at (0,-5) {$(1,-5)$};
\node (15) at (4,-5) {$(3,-5)$};
\node (16) at (6,-5) {$(4,-5)$};

\node (17) at (2,-6) {$(2,-6)$};

\node (box) [draw=red,rounded corners,fit = (6)] {};

%\draw [->] (1) edge (5);
\draw [->] (9) edge (5);
\draw [->] (13) edge (9);
\draw [->] (13) edge (17);

%\draw [->] (6) edge (2);
\draw [->] (10) edge (6);
\draw [->] (14) edge (10);

%\draw [->] (7) edge (3);
\draw [->] (11) edge (7);
\draw [->] (15) edge (11);

%\draw [->] (8) edge (4);
\draw [->] (12) edge (8);
\draw [->] (16) edge (12);

\draw [->] (6) edge (5);
\draw [->] (7) edge (5);
\draw [->] (8) edge (5);

\draw [->] (10) edge (9);
\draw [->] (11) edge (9);
\draw [->] (12) edge (9);

\draw [->] (14) edge (13);
\draw [->] (15) edge (13);
\draw [->] (16) edge (13);

\draw [->] (5) edge (2);
\draw [->] (5) edge (3);
\draw [->] (5) edge (4);

%\draw [->] (6) edge (9);
%\draw [->] (7) edge (9);
%\draw [->] (8) edge (9);

%\draw [->] (10) edge (13);
%\draw [->] (11) edge (13);
%\draw [->] (12) edge (13);

%\draw [->] (14) edge (17);
%\draw [->] (15) edge (17);
%\draw [->] (16) edge (17);

\draw [->] (9) edge (14);
\draw [->] (9) edge (15);
\draw [->] (9) edge (16);

%\draw [->] (1) to[out=240, in=120, looseness=1.2] (9);

\draw [->] (2) to[out=270, in=180, looseness=0.8] (7);
\draw [->] (2) to[out=350, in=150, looseness=0.8] (8);

\draw [->] (3) to[out=270, in=0, looseness=0.8] (6);
\draw [->] (3) to (8);

\draw [->] (4) to[out=220, in=0, looseness=0.8] (6);
\draw [->] (4) to (7);

\draw [->] (5) to (10);
\draw [->] (5) to (11);
\draw [->] (5) to (12);

\draw [->] (1) to[out=160, in=200, looseness=1.5] (13);
\draw [->] (17) to[out=200, in=140, looseness=1.5] (1);
\end{tikzpicture}
\end{center}
\end{minipage}
\begin{minipage}[c]{4cm}
Mutation at $(1,-1)$.
\vspace*{1em}

\begin{equation*}
 \deg\left( z_{1,-1}^{(1)} \right) = e_3 + e_4.
\end{equation*}
\end{minipage}

\begin{align*}
	z_{1,-1}^{(1)} &  = z_{1,-3}\left(z_{1,-1}\right)^{-1}f_3 f_4 \\		& + \left(z_{1,-1}\right)^{-1}z_{2,-2}\left(z_{2,0}\right)^{-1}f_1 f_3 f_4 + z_{3,-1}z_{4,-1}\left(z_{2,0}\right)^{-1}f_2 \\
z_{1,-1}^{(1)} 	& = \mathcal{J}\left( Y_{1,-3} + \left(Y_{1,-1}\right)^{-1}Y_{2,-2} + \left(Y_{2,0}\right)^{-1}Y_{3,-1}Y_{4,-1}\right)f_3 f_4.
 \end{align*}

\vspace*{2em}

\begin{minipage}{10cm}
\begin{center}
\begin{tikzpicture}[scale=0.8]
\path[use as bounding box] (-2, 3) rectangle (7, -7);
\node (1) at (2,2) {\boxed{$(2,2)$}};
\node (2) at (0,1) {\boxed{$(1,1)$}};
\node (3) at (4,1) {\boxed{$(3,1)$}};
\node (4) at (6,1) {\boxed{$(4,1)$}};

\node (5) at (2,0) {$(2,0)$};
\node (6) at (0,-1) {$(1,-1)$};
\node (7) at (4,-1) {$(3,-1)$};
\node (8) at (6,-1) {$(4,-1)$};

\node (9) at (2,-2) {$(2,-2)$};
\node (10) at (0,-3) {$(1,-3)$};
\node (11) at (4,-3) {$(3,-3)$};
\node (12) at (6,-3) {$(4,-3)$};

\node (13) at (2,-4) {$(2,-4)$};
\node (14) at (0,-5) {$(1,-5)$};
\node (15) at (4,-5) {$(3,-5)$};
\node (16) at (6,-5) {$(4,-5)$};

\node (17) at (2,-6) {$(2,-6)$};

\node (box) [draw=red,rounded corners,fit = (10)] {};

%\draw [->] (1) edge (5);
\draw [->] (9) edge (5);
\draw [->] (13) edge (9);
\draw [->] (13) edge (17);

%\draw [->] (6) edge (2);
\draw [->] (6) edge (10);
\draw [->] (14) edge (10);

%\draw [->] (7) edge (3);
\draw [->] (11) edge (7);
\draw [->] (15) edge (11);

%\draw [->] (8) edge (4);
\draw [->] (12) edge (8);
\draw [->] (16) edge (12);

\draw [->] (5) edge (6);
\draw [->] (7) edge (5);
\draw [->] (8) edge (5);

\draw [->] (10) edge (9);
\draw [->] (11) edge (9);
\draw [->] (12) edge (9);

\draw [->] (14) edge (13);
\draw [->] (15) edge (13);
\draw [->] (16) edge (13);

\draw [->] (5) edge (2);
%\draw [->] (5) edge (3);
%\draw [->] (5) edge (4);

%\draw [->] (6) edge (9);
%\draw [->] (7) edge (9);
%\draw [->] (8) edge (9);

%\draw [->] (10) edge (13);
%\draw [->] (11) edge (13);
%\draw [->] (12) edge (13);

%\draw [->] (14) edge (17);
%\draw [->] (15) edge (17);
%\draw [->] (16) edge (17);

\draw [->] (9) edge (14);
\draw [->] (9) edge (15);
\draw [->] (9) edge (16);

%\draw [->] (1) to[out=240, in=120, looseness=1.2] (9);

\draw [->] (2) to[out=270, in=180, looseness=0.8] (7);
\draw [->] (2) to[out=350, in=150, looseness=0.8] (8);

\draw [->] (6) to[out=0, in=270, looseness=0.8] (3);
\draw [->] (3) to (8);

\draw [->] (6) to[out=0, in=220, looseness=0.8] (4);
\draw [->] (4) to (7);

%\draw [->] (5) to (10);
\draw [->] (5) to (11);
\draw [->] (5) to (12);

\draw [->] (1) to[out=160, in=200, looseness=1.5] (13);
\draw [->] (17) to[out=200, in=140, looseness=1.5] (1);
\end{tikzpicture}
\end{center}
\end{minipage}
\begin{minipage}[c]{4cm}
Mutation at $(1,-3)$.
\vspace*{1em}

\begin{equation*}
\deg\left( z_{1,-3}^{(1)} \right) = e_3 + e_4.
\end{equation*}
\end{minipage}

\begin{align*}
	z_{1,-3}^{(1)} &  = z_{1,-5}\left(z_{1,-1}\right)^{-1}f_3 f_4  + z_{1,-5}\left(z_{1,-3}\right)^{-1}\left(z_{1,-1}\right)^{-1}z_{2,-2}\left(z_{2,0}\right)^{-1}f_1 f_3 f_4 \\
	& + z_{1,-5}\left(z_{1,-3}\right)^{-1}\left(z_{2,0}\right)^{-1}z_{3,-1}z_{4,-1}f_2 + \left(z_{1,-3}\right)^{-1}z_{2,-4}\left(z_{2,0}\right)^{-1}f_1 f_3 f_4 \\
	& + \left(z_{1,-3}\right)^{-1}z_{1,-1}\left(z_{2,-2}\right)^{-1} \left(z_{2,0}\right)^{-1}z_{3,-1}z_{4,-1}z_{2,-4} f_2 + \left(z_{2,-2}\right)^{-1}z_{3,-3}z_{4,-3}  f_2,\\
z_{1,-3}^{(1)}	& = \mathcal{J}\left(Y_{1,-5}Y_{1,-3} + Y_{1,-5}\left(Y_{1,-1}\right)^{-1}Y_{2,-2} + Y_{1,-5}\left(Y_{2,0}\right)^{-1}Y_{3,-1}Y_{4,-1} \right.\\
	& + \left(Y_{1,-3}\right)^{-1}\left(Y_{1,-1}\right)^{-1}Y_{2,-4}Y_{2,0} + \left(Y_{1,-3}\right)^{-1}Y_{2,-4}\left(Y_{2,0}\right)^{-1}Y_{3,-1}Y_{4,-1} \\
	& \left. + \left(Y_{2,-2}\right)^{-1}\left(Y_{2,0}\right)^{-1}Y_{3,-3}Y_{3,-1}Y_{4,-3}Y_{4,-1}\right)f_3 f_4.
\end{align*}

\vspace*{2em}

The next steps of the mutation process are the mutations $(3,-1)$,$(3,-3)$ and $(4,-1)$, $(4,-3)$. However, the resulting cluster variables are equal $z_{1,-1}^{(1)}$ $z_{1,-3}^{(1)}$, up to the exchanges $1 \longleftrightarrow 3$ and $1 \longleftrightarrow 4$, so we will not give their expressions. We only show the mutation of the quiver, and the multi-degrees.

\vspace*{5em}

\begin{minipage}{10cm}
\begin{center}
\begin{tikzpicture}[scale=0.8]
\path[use as bounding box] (-2, 3) rectangle (7, -7);
\node (1) at (2,2) {\boxed{$(2,2)$}};
\node (2) at (0,1) {\boxed{$(1,1)$}};
\node (3) at (4,1) {\boxed{$(3,1)$}};
\node (4) at (6,1) {\boxed{$(4,1)$}};

\node (5) at (2,0) {$(2,0)$};
\node (6) at (0,-1) {$(1,-1)$};
\node (7) at (4,-1) {$(3,-1)$};
\node (8) at (6,-1) {$(4,-1)$};

\node (9) at (2,-2) {$(2,-2)$};
\node (10) at (0,-3) {$(1,-3)$};
\node (11) at (4,-3) {$(3,-3)$};
\node (12) at (6,-3) {$(4,-3)$};

\node (13) at (2,-4) {$(2,-4)$};
\node (14) at (0,-5) {$(1,-5)$};
\node (15) at (4,-5) {$(3,-5)$};
\node (16) at (6,-5) {$(4,-5)$};

\node (17) at (2,-6) {$(2,-6)$};

\node (box) [draw=red,rounded corners,fit = (7)] {};

%\draw [->] (1) edge (5);
\draw [->] (9) edge (5);
\draw [->] (13) edge (9);
\draw [->] (13) edge (17);

%\draw [->] (6) edge (2);
\draw [->] (10) edge (6);
\draw [->] (10) edge (14);

%\draw [->] (7) edge (3);
\draw [->] (11) edge (7);
\draw [->] (15) edge (11);

%\draw [->] (8) edge (4);
\draw [->] (12) edge (8);
\draw [->] (16) edge (12);

\draw [->] (5) edge (6);
\draw [->] (7) edge (5);
\draw [->] (8) edge (5);

\draw [->] (9) edge (10);
\draw [->] (11) edge (9);
\draw [->] (12) edge (9);

\draw [->] (14) edge (13);
\draw [->] (15) edge (13);
\draw [->] (16) edge (13);

\draw [->] (5) edge (2);
%\draw [->] (5) edge (3);
%\draw [->] (5) edge (4);

\draw [->] (6) edge (9);
%\draw [->] (7) edge (9);
%\draw [->] (8) edge (9);

%\draw [->] (10) edge (13);
%\draw [->] (11) edge (13);
%\draw [->] (12) edge (13);

%\draw [->] (14) edge (17);
%\draw [->] (15) edge (17);
%\draw [->] (16) edge (17);

%\draw [->] (9) edge (14);
\draw [->] (9) edge (15);
\draw [->] (9) edge (16);

%\draw [->] (1) to[out=240, in=120, looseness=1.2] (9);

\draw [->] (2) to[out=270, in=180, looseness=0.8] (7);
\draw [->] (2) to[out=350, in=150, looseness=0.8] (8);

\draw [->] (6) to[out=0, in=270, looseness=0.8] (3);
\draw [->] (3) to (8);

\draw [->] (6) to[out=0, in=220, looseness=0.8] (4);
\draw [->] (4) to (7);

%\draw [->] (5) to (10);
\draw [->] (5) to (11);
\draw [->] (5) to (12);

\draw [->] (1) to[out=160, in=200, looseness=1.5] (13);
\draw [->] (17) to[out=200, in=140, looseness=1.5] (1);
\end{tikzpicture}
\end{center}
\end{minipage}
\begin{minipage}[c]{4cm}
Mutation at $(3,-1)$.
\vspace*{1em}

\begin{equation*}
\deg\left( z_{3,-1}^{(1)} \right) = e_1 + e_4.
\end{equation*}
\end{minipage}

%\begin{align*}
%z_{3,-1}^{(1)} &  = z_{1,1}\left(z_{3,-1}\right)^{-1}z_{3,1}z_{4,1}z_{2,-2}\left(z_{2,0}\right)^{-1} \\
% & + z_{1,-1}z_{4,-1}z_{2,2}\left(z_{2,0}\right)^{-1} \\
% & + z_{1,1}z_{3,-3}\left(z_{3,-1}\right)^{-1}z_{4,1} .
%\end{align*}
%\[ \deg\left( z_{3,-1}^{(1)} \right) = e_1 + e_4.
%\]

%\vspace*{5em}
%\newpage

\begin{minipage}{10cm}
\begin{center}
\begin{tikzpicture}[scale=0.7]
\path[use as bounding box] (-2, 3) rectangle (7, -7);
\node (1) at (2,2) {\boxed{$(2,2)$}};
\node (2) at (0,1) {\boxed{$(1,1)$}};
\node (3) at (4,1) {\boxed{$(3,1)$}};
\node (4) at (6,1) {\boxed{$(4,1)$}};

\node (5) at (2,0) {$(2,0)$};
\node (6) at (0,-1) {$(1,-1)$};
\node (7) at (4,-1) {$(3,-1)$};
\node (8) at (6,-1) {$(4,-1)$};

\node (9) at (2,-2) {$(2,-2)$};
\node (10) at (0,-3) {$(1,-3)$};
\node (11) at (4,-3) {$(3,-3)$};
\node (12) at (6,-3) {$(4,-3)$};

\node (13) at (2,-4) {$(2,-4)$};
\node (14) at (0,-5) {$(1,-5)$};
\node (15) at (4,-5) {$(3,-5)$};
\node (16) at (6,-5) {$(4,-5)$};

\node (17) at (2,-6) {$(2,-6)$};

\node (box) [draw=red,rounded corners,fit = (11)] {};

%\draw [->] (1) edge (5);
\draw [->] (9) edge (5);
\draw [->] (13) edge (9);
\draw [->] (13) edge (17);

%\draw [->] (6) edge (2);
\draw [->] (10) edge (6);
\draw [->] (10) edge (14);

%\draw [->] (7) edge (3);
\draw [->] (7) edge (11);
\draw [->] (15) edge (11);

%\draw [->] (8) edge (4);
\draw [->] (12) edge (8);
\draw [->] (16) edge (12);

\draw [->] (5) edge (6);
\draw [->] (5) edge (7);
\draw [->] (8) edge (5);

\draw [->] (9) edge (10);
\draw [->] (11) edge (9);
\draw [->] (12) edge (9);

\draw [->] (14) edge (13);
\draw [->] (15) edge (13);
\draw [->] (16) edge (13);

%\draw [->] (5) edge (2);
%\draw [->] (5) edge (3);
\draw [->] (4) edge (5);

\draw [->] (6) edge (9);
%\draw [->] (7) edge (9);
%\draw [->] (8) edge (9);

%\draw [->] (10) edge (13);
%\draw [->] (11) edge (13);
%\draw [->] (12) edge (13);

%\draw [->] (14) edge (17);
%\draw [->] (15) edge (17);
%\draw [->] (16) edge (17);

%\draw [->] (9) edge (14);
\draw [->] (9) edge (15);
\draw [->] (9) edge (16);

%\draw [->] (1) to[out=240, in=120, looseness=1.2] (9);

\draw [->] (7) to[out=180, in=270, looseness=0.8] (2);
\draw [->] (2) to[out=350, in=150, looseness=0.8] (8);

\draw [->] (6) to[out=0, in=270, looseness=0.8] (3);
\draw [->] (3) to (8);

\draw [->] (6) to[out=0, in=220, looseness=0.8] (4);
\draw [->] (7) to (4);

%\draw [->] (5) to (10);
%\draw [->] (5) to (11);
\draw [->] (5) to (12);

\draw [->] (1) to[out=160, in=200, looseness=1.5] (13);
\draw [->] (17) to[out=200, in=140, looseness=1.5] (1);
\end{tikzpicture}
\end{center}
\end{minipage}
\begin{minipage}[c]{4cm}
Mutation at $(3,-3)$.
\vspace*{1em}

\begin{equation*}
\deg\left( z_{3,-3}^{(1)} \right) = e_1 + e_4.
\end{equation*}
\end{minipage}

%\begin{align*}
%z_{3,-3}^{(1)} &  = z_{1,1}z_{3,-5}\left(z_{3,-3}\right)^{-1}\left(z_{3,-1}\right)^{-1}z_{3,1}z_{4,1}z_{2,-2}\left(z_{2,0}\right)^{-1} \\
% & + z_{1,-1}z_{3,-5}\left(z_{3,-3}\right)^{-1}z_{4,-1}z_{2,2}\left(z_{2,0}\right)^{-1} \\
% & + z_{1,1}z_{3,-5}\left(z_{3,-1}\right)^{-1}z_{4,1} \\
% & + z_{1,1}\left(z_{3,-3}\right)^{-1}z_{3,1}z_{4,1}z_{2,-4}\left(z_{2,0}\right)^{-1} \\
% & + z_{1,-1}\left(z_{3,-3}\right)^{-1}z_{3,-1}z_{4,-1}z_{2,-4}\left(z_{2,-2}\right)^{-1} \left(z_{2,0}\right)^{-1} z_{2,2}\\
% & + z_{1,-3}z_{4,-3}\left(z_{2,-2}\right)^{-1}  z_{2,2}
%\end{align*}
%\[ \deg\left( z_{3,-3}^{(1)} \right) = e_1 + e_4.
%\]

%\vspace*{5em}

\begin{minipage}{10cm}
\begin{center}
\begin{tikzpicture}[scale=0.7]
%\vspace*{1em}
\path[use as bounding box] (-2, 3) rectangle (7, -7);
\node (1) at (2,2) {\boxed{$(2,2)$}};
\node (2) at (0,1) {\boxed{$(1,1)$}};
\node (3) at (4,1) {\boxed{$(3,1)$}};
\node (4) at (6,1) {\boxed{$(4,1)$}};

\node (5) at (2,0) {$(2,0)$};
\node (6) at (0,-1) {$(1,-1)$};
\node (7) at (4,-1) {$(3,-1)$};
\node (8) at (6,-1) {$(4,-1)$};

\node (9) at (2,-2) {$(2,-2)$};
\node (10) at (0,-3) {$(1,-3)$};
\node (11) at (4,-3) {$(3,-3)$};
\node (12) at (6,-3) {$(4,-3)$};

\node (13) at (2,-4) {$(2,-4)$};
\node (14) at (0,-5) {$(1,-5)$};
\node (15) at (4,-5) {$(3,-5)$};
\node (16) at (6,-5) {$(4,-5)$};

\node (17) at (2,-6) {$(2,-6)$};

\node (box) [draw=red,rounded corners,fit = (8)] {};

%\draw [->] (1) edge (5);
\draw [->] (9) edge (5);
\draw [->] (13) edge (9);
\draw [->] (13) edge (17);

%\draw [->] (6) edge (2);
\draw [->] (10) edge (6);
\draw [->] (10) edge (14);

%\draw [->] (7) edge (3);
\draw [->] (11) edge (7);
\draw [->] (11) edge (15);

%\draw [->] (8) edge (4);
\draw [->] (12) edge (8);
\draw [->] (16) edge (12);

\draw [->] (5) edge (6);
\draw [->] (5) edge (7);
\draw [->] (8) edge (5);

\draw [->] (9) edge (10);
\draw [->] (9) edge (11);
\draw [->] (12) edge (9);

\draw [->] (14) edge (13);
\draw [->] (15) edge (13);
\draw [->] (16) edge (13);

%\draw [->] (5) edge (2);
%\draw [->] (5) edge (3);
\draw [->] (4) edge (5);

\draw [->] (6) edge (9);
\draw [->] (7) edge (9);
%\draw [->] (8) edge (9);

%\draw [->] (10) edge (13);
%\draw [->] (11) edge (13);
%\draw [->] (12) edge (13);

%\draw [->] (14) edge (17);
%\draw [->] (15) edge (17);
%\draw [->] (16) edge (17);

%\draw [->] (9) edge (14);
%\draw [->] (9) edge (15);
\draw [->] (9) edge (16);

%\draw [->] (1) to[out=240, in=120, looseness=1.2] (9);

\draw [->] (7) to[out=180, in=270, looseness=0.8] (2);
\draw [->] (2) to[out=350, in=150, looseness=0.8] (8);

\draw [->] (6) to[out=0, in=270, looseness=0.8] (3);
\draw [->] (3) to (8);

\draw [->] (6) to[out=0, in=220, looseness=0.8] (4);
\draw [->] (7) to (4);

%\draw [->] (5) to (10);
%\draw [->] (5) to (11);
\draw [->] (5) to (12);

\draw [->] (1) to[out=160, in=200, looseness=1.5] (13);
\draw [->] (17) to[out=200, in=140, looseness=1.5] (1);
\end{tikzpicture}
\end{center}
\end{minipage}
\begin{minipage}[c]{4cm}
Mutation at $(4,-1)$.
\vspace*{1em}

\begin{equation*}
\deg\left( z_{4,-1}^{(1)} \right) = e_1 + e_3.
\end{equation*}
\end{minipage}

%\begin{align*}
%z_{4,-1}^{(1)} &  = z_{1,1}z_{3,1}\left(z_{4,-1}\right)^{-1}z_{4,1}z_{2,-2}\left(z_{2,0}\right)^{-1} \\
% & + z_{1,-1}z_{3,-1}z_{2,2}\left(z_{2,0}\right)^{-1} \\
% & + z_{1,1}z_{3,1}z_{4,-3}\left(z_{4,-1}\right)^{-1}.
%\end{align*}
%\[ \deg\left( z_{4,-1}^{(1)} \right) = e_1 + e_3.
%\]

%\vspace*{1em}

%\newpage

\begin{minipage}{10cm}
\begin{center}
\begin{tikzpicture}[scale=0.7]
\path[use as bounding box] (-2, 3) rectangle (7, -7);
\node (1) at (2,2) {\boxed{$(2,2)$}};
\node (2) at (0,1) {\boxed{$(1,1)$}};
\node (3) at (4,1) {\boxed{$(3,1)$}};
\node (4) at (6,1) {\boxed{$(4,1)$}};

\node (5) at (2,0) {$(2,0)$};
\node (6) at (0,-1) {$(1,-1)$};
\node (7) at (4,-1) {$(3,-1)$};
\node (8) at (6,-1) {$(4,-1)$};

\node (9) at (2,-2) {$(2,-2)$};
\node (10) at (0,-3) {$(1,-3)$};
\node (11) at (4,-3) {$(3,-3)$};
\node (12) at (6,-3) {$(4,-3)$};

\node (13) at (2,-4) {$(2,-4)$};
\node (14) at (0,-5) {$(1,-5)$};
\node (15) at (4,-5) {$(3,-5)$};
\node (16) at (6,-5) {$(4,-5)$};

\node (17) at (2,-6) {$(2,-6)$};

\node (box) [draw=red,rounded corners,fit = (12)] {};

%\draw [->] (1) edge (5);
\draw [->] (9) edge (5);
\draw [->] (13) edge (9);
\draw [->] (13) edge (17);

%\draw [->] (6) edge (2);
\draw [->] (10) edge (6);
\draw [->] (10) edge (14);

%\draw [->] (7) edge (3);
\draw [->] (11) edge (7);
\draw [->] (11) edge (15);

%\draw [->] (8) edge (4);
\draw [->] (8) edge (12);
\draw [->] (16) edge (12);

\draw [->] (5) edge (6);
\draw [->] (5) edge (7);
\draw [->] (5) edge (8);

\draw [->] (9) edge (10);
\draw [->] (9) edge (11);
\draw [->] (12) edge (9);

\draw [->] (14) edge (13);
\draw [->] (15) edge (13);
\draw [->] (16) edge (13);

\draw [->] (2) edge (5);
\draw [->] (3) edge (5);
\draw [->] (4) edge (5);

\draw [->] (6) edge (9);
\draw [->] (7) edge (9);
%\draw [->] (8) edge (9);

%\draw [->] (10) edge (13);
%\draw [->] (11) edge (13);
%\draw [->] (12) edge (13);

%\draw [->] (14) edge (17);
%\draw [->] (15) edge (17);
%\draw [->] (16) edge (17);

%\draw [->] (9) edge (14);
%\draw [->] (9) edge (15);
\draw [->] (9) edge (16);

%\draw [->] (1) to[out=240, in=120, looseness=1.2] (9);

\draw [->] (7) to[out=180, in=270, looseness=0.8] (2);
\draw [->] (8) to[out=150, in=350, looseness=0.8] (2);

\draw [->] (6) to[out=0, in=270, looseness=0.8] (3);
\draw [->] (8) to (3);

\draw [->] (6) to[out=0, in=220, looseness=0.8] (4);
\draw [->] (7) to (4);

%\draw [->] (5) to (10);
%\draw [->] (5) to (11);
%\draw [->] (5) to (12);

\draw [->] (1) to[out=160, in=200, looseness=1.5] (13);
\draw [->] (17) to[out=200, in=140, looseness=1.5] (1);
\end{tikzpicture}
\end{center}
\end{minipage}
\begin{minipage}[c]{4cm}
Mutation at $(4,-3)$.
\vspace*{1em}

\begin{equation*}
\deg\left( z_{4,-3}^{(1)} \right) = e_1 + e_3.
\end{equation*}
\end{minipage}

%\begin{align*}
%z_{4,-3}^{(1)} &  = z_{1,1}z_{3,1}z_{4,-5}\left(z_{4,-3}\right)^{-1}\left(z_{4,-1}\right)^{-1}z_{4,1}z_{2,-2}\left(z_{2,0}\right)^{-1} \\
% & + z_{1,-1}z_{3,-1}z_{2,2}\left(z_{2,0}\right)^{-1}z_{4,-5}\left(z_{4,-3}\right)^{-1} \\
% & + z_{1,1}z_{3,1}z_{4,-5}\left(z_{4,-1}\right)^{-1}  \\
% & + z_{1,1}z_{3,1}\left(z_{4,-3}\right)^{-1}z_{4,1}z_{2,-4}\left(z_{2,0}\right)^{-1} \\
% & + z_{1,-1}z_{3,-1}\left(z_{4,-3}\right)^{-1}z_{4,-1}z_{2,-4}\left(z_{2,-2}\right)^{-1} \left(z_{2,0}\right)^{-1} z_{2,2}\\
% & + z_{1,-3}z_{3,-3}\left(z_{2,-2}\right)^{-1}  z_{2,2}.
%\end{align*}
%\[ \deg\left( z_{4,-3}^{(1)} \right) = e_1 + e_3.
%\]

%\newpage
%\vspace*{5em}

\begin{minipage}{10cm}
\begin{center}
\begin{tikzpicture}[scale=0.8]
\path[use as bounding box] (-2, 3) rectangle (7, -7);
\node (1) at (2,2) {\boxed{$(2,2)$}};
\node (2) at (0,1) {\boxed{$(1,1)$}};
\node (3) at (4,1) {\boxed{$(3,1)$}};
\node (4) at (6,1) {\boxed{$(4,1)$}};

\node (5) at (2,0) {$(2,0)$};
\node (6) at (0,-1) {$(1,-1)$};
\node (7) at (4,-1) {$(3,-1)$};
\node (8) at (6,-1) {$(4,-1)$};

\node (9) at (2,-2) {$(2,-2)$};
\node (10) at (0,-3) {$(1,-3)$};
\node (11) at (4,-3) {$(3,-3)$};
\node (12) at (6,-3) {$(4,-3)$};

\node (13) at (2,-4) {$(2,-4)$};
\node (14) at (0,-5) {$(1,-5)$};
\node (15) at (4,-5) {$(3,-5)$};
\node (16) at (6,-5) {$(4,-5)$};

\node (17) at (2,-6) {$(2,-6)$};

\node (box) [draw=red,rounded corners,fit = (5)] {};

%\draw [->] (1) edge (5);
\draw [->] (9) edge (5);
\draw [->] (13) edge (9);
\draw [->] (13) edge (17);

%\draw [->] (6) edge (2);
\draw [->] (10) edge (6);
\draw [->] (10) edge (14);

%\draw [->] (7) edge (3);
\draw [->] (11) edge (7);
\draw [->] (11) edge (15);

%\draw [->] (8) edge (4);
\draw [->] (12) edge (8);
\draw [->] (12) edge (16);

\draw [->] (5) edge (6);
\draw [->] (5) edge (7);
\draw [->] (5) edge (8);

\draw [->] (9) edge (10);
\draw [->] (9) edge (11);
\draw [->] (9) edge (12);

\draw [->] (14) edge (13);
\draw [->] (15) edge (13);
\draw [->] (16) edge (13);

\draw [->] (2) edge (5);
\draw [->] (3) edge (5);
\draw [->] (4) edge (5);

\draw [->] (6) edge (9);
\draw [->] (7) edge (9);
\draw [->] (8) edge (9);

%\draw [->] (10) edge (13);
%\draw [->] (11) edge (13);
%\draw [->] (12) edge (13);

%\draw [->] (14) edge (17);
%\draw [->] (15) edge (17);
%\draw [->] (16) edge (17);

%\draw [->] (9) edge (14);
%\draw [->] (9) edge (15);
%\draw [->] (9) edge (16);

%\draw [->] (1) to[out=240, in=120, looseness=1.2] (9);

\draw [->] (7) to[out=180, in=270, looseness=0.8] (2);
\draw [->] (8) to[out=150, in=350, looseness=0.8] (2);

\draw [->] (6) to[out=0, in=270, looseness=0.8] (3);
\draw [->] (8) to (3);

\draw [->] (6) to[out=0, in=220, looseness=0.8] (4);
\draw [->] (7) to (4);

%\draw [->] (5) to (10);
%\draw [->] (5) to (11);
%\draw [->] (5) to (12);

\draw [->] (1) to[out=160, in=200, looseness=1.5, /tikz/overlay] (13);
\draw [->] (17) to[out=200, in=140, looseness=1.5, /tikz/overlay] (1);
\end{tikzpicture}
\end{center}
\end{minipage}
\begin{minipage}[c]{4cm}
Mutation at $(2,0)$.
\vspace*{1em}

\begin{equation*}
\deg(z_{2,0}^{(2)}) = e_1 + e_3 + e_4.
\end{equation*}
\end{minipage}

\begin{align*}
	z_{2,0}^{(2)} &  = z_{2,-4}\left( z_{2,-2}\right)^{-1}f_1 f_3 f_4 \\
	& +  z_{1,-3}\left( z_{1,-1}\right)^{-1} \left( z_{2,-2}\right)^{-1} z_{2,0} z_{3,-3}\left( z_{3,-1}\right)^{-1}z_{4,-3}\left( z_{4,-1}\right)^{-1} f_1 f_3 f_4 \\
	& + \left(z_{1,-1}\right)^{-1}z_{3,-3}\left(z_{3,-1}\right)^{-1}z_{4,-3}\left(z_{4,-1}\right)^{-1}f_1^2 f_3 f_4 \\
	& + z_{1,-3}\left(z_{1,-1}\right)^{-1}\left(z_{3,-1}\right)^{-1} z_{4,-3}\left(z_{4,-1}\right)^{-1}f_1 f_3^2 f_4 \\
	& + z_{1,-3}\left(z_{1,-1}\right)^{-1}z_{3,-3}\left(z_{3,-1}\right)^{-1}\left(z_{4,-1}\right)^{-1}f_1 f_3 f_4^2 \\
	& + \left( z_{1,-1}\right)^{-1}  z_{2,-2}\left( z_{2,0}\right)^{-1}\left( z_{3,-1}\right)^{-1}  z_{4,-3}\left( z_{4,-1}\right)^{-1} z_{4,1}f_1^2 f_3^2 f_4\\
	& + \left( z_{1,-1}\right)^{-1}  z_{2,-2}\left( z_{2,0}\right)^{-1} z_{3,-3}\left( z_{3,-1}\right)^{-1} \left( z_{4,-1}\right)^{-1} f_1^2 f_3 f_4^2 \\
	& +  z_{1,-3}\left( z_{1,-1}\right)^{-1}  z_{2,-2}\left( z_{2,0}\right)^{-1}\left( z_{3,-1}\right)^{-1} \left( z_{4,-1}\right)^{-1} f_1 f_3^2 f_4^2 \\
	& + \left(z_{2,0}\right)^{-1}z_{4,-3}f_1 f_2 f_3 +  \left(z_{2,0}\right)^{-1}z_{3,-3}f_1 f_2 f_4 +  z_{1,-3}\left(z_{2,0}\right)^{-1}f_2 f_3 f_4 \\
	& +\left( z_{1,-1}\right)^{-1} z_{2,-2}^2\left( z_{2,0}\right)^{-2}\left( z_{3,-1}\right)^{-1} \left( z_{4,-1}\right)^{-1} f_1^2 f_3^2 f_4^2\\
	& + \left( {\color{red} t+t^{-1}} \right)  z_{2,-2}\left( z_{2,0}\right)^{-2} f_1 f_2 f_3 f_4 + z_{1,-1}\left(z_{2,0}\right)^{-2} z_{3,-1}z_{4,-1}f_2^2, 
\end{align*}

\begin{align*}
z_{2,0}^{(2)} & =\mathcal{J}  \left( Y_{2,-4} + Y_{1,-3}\left(Y_{2,-2}\right)^{-1}Y_{3,-3}Y_{4,-3} \right.\\
	& + \left(Y_{1,-1}\right)^{-1}Y_{3,-3}Y_{4,-3}  + Y_{1,-3}\left(Y_{3,-1}\right)^{-1}Y_{4,-3} + Y_{1,-3}Y_{3,-3}\left(Y_{4,-1}\right)^{-1} \\
	& + \left(Y_{1,-1}\right)^{-1} Y_{2,-2} \left(Y_{3,-1}\right)^{-1}Y_{4,-3} + \left(Y_{1,-1}\right)^{-1} Y_{2,-2} Y_{3,-3}\left(Y_{4,-1}\right)^{-1} \\
	&  + Y_{1,-3}Y_{2,-2} \left(Y_{3,-1}\right)^{-1} \left(Y_{4,-1}\right)^{-1} + \left(Y_{2,0}\right)^{-1}Y_{4,-3}Y_{4,-1} \\
	& + \left(Y_{2,0}\right)^{-1}Y_{3,-3}Y_{3,-1} + Y_{1,-3}Y_{1,-1}\left(Y_{2,0}\right)^{-1} \\
	& + \left(Y_{1,-1}\right)^{-1}\left(Y_{2,-2}\right)^{2}\left(Y_{3,-1}\right)^{-1} \left(Y_{4,-1}\right)^{-1} + \left( {\color{red} t+t^{-1}} \right) Y_{2,-2}\left(Y_{2,0}\right)^{-1}   \\
	&\left. + Y_{1,-1}\left(Y_{2,0}\right)^{-2}Y_{3,-1}Y_{4,-1}\right)f_1 f_3 f_4.
\end{align*}

\begin{minipage}{10cm}
\begin{center}
\begin{tikzpicture}[scale=0.8]
\path[use as bounding box] (-2, 3) rectangle (7, -7);
\node (1) at (2,2) {\boxed{$(2,2)$}};
\node (2) at (0,1) {\boxed{$(1,1)$}};
\node (3) at (4,1) {\boxed{$(3,1)$}};
\node (4) at (6,1) {\boxed{$(4,1)$}};

\node (5) at (2,0) {$(2,0)$};
\node (6) at (0,-1) {$(1,-1)$};
\node (7) at (4,-1) {$(3,-1)$};
\node (8) at (6,-1) {$(4,-1)$};

\node (9) at (2,-2) {$(2,-2)$};
\node (10) at (0,-3) {$(1,-3)$};
\node (11) at (4,-3) {$(3,-3)$};
\node (12) at (6,-3) {$(4,-3)$};

\node (13) at (2,-4) {$(2,-4)$};
\node (14) at (0,-5) {$(1,-5)$};
\node (15) at (4,-5) {$(3,-5)$};
\node (16) at (6,-5) {$(4,-5)$};

\node (17) at (2,-6) {$(2,-6)$};

\node (box) [draw=red,rounded corners,fit = (9)] {};

%\draw [->] (1) edge (5);
\draw [->] (5) edge (9);
\draw [->] (13) edge (9);
\draw [->] (13) edge (17);

\draw [->] (2) edge (6);
\draw [->] (10) edge (6);
\draw [->] (10) edge (14);

\draw [->] (3) edge (7);
\draw [->] (11) edge (7);
\draw [->] (11) edge (15);

\draw [->] (4) edge (8);
\draw [->] (12) edge (8);
\draw [->] (12) edge (16);

\draw [->] (6) edge (5);
\draw [->] (7) edge (5);
\draw [->] (8) edge (5);

\draw [->] (9) edge (10);
\draw [->] (9) edge (11);
\draw [->] (9) edge (12);

\draw [->] (14) edge (13);
\draw [->] (15) edge (13);
\draw [->] (16) edge (13);

\draw [->] (5) edge (2);
\draw [->] (5) edge (3);
\draw [->] (5) edge (4);

%\draw [->] (6) edge (9);
%\draw [->] (7) edge (9);
%\draw [->] (8) edge (9);

%\draw [->] (10) edge (13);
%\draw [->] (11) edge (13);
%\draw [->] (12) edge (13);

%\draw [->] (14) edge (17);
%\draw [->] (15) edge (17);
%\draw [->] (16) edge (17);

%\draw [->] (9) edge (14);
%\draw [->] (9) edge (15);
%\draw [->] (9) edge (16);

%\draw [->] (1) to[out=240, in=120, looseness=1.2] (9);

%\draw [->] (7) to[out=180, in=270, looseness=0.8] (2);
%\draw [->] (8) to[out=150, in=350, looseness=0.8] (2);

%\draw [->] (6) to[out=0, in=270, looseness=0.8] (3);
%\draw [->] (8) to (3);

%\draw [->] (6) to[out=0, in=220, looseness=0.8] (4);
%\draw [->] (7) to (4);

%\draw [->] (5) to (10);
%\draw [->] (5) to (11);
%\draw [->] (5) to (12);

\draw [->] (1) to[out=160, in=200, looseness=1.5] (13);
\draw [->] (17) to[out=200, in=140, looseness=1.5] (1);
\end{tikzpicture}
\end{center}
\end{minipage}
\begin{minipage}[c]{4cm}
Mutation at $(2,-2)$.
\vspace*{2em}

\begin{equation*}
\deg(z_{2,-2}^{(2)}) = e_1 + e_3 + e_4.
\end{equation*}
\end{minipage}

Here we do not write explicitly $z_{2,-2}^{(2)}$, because it has 92 terms. 

\vspace*{2em}

\begin{minipage}{10cm}
\begin{center}
\begin{tikzpicture}[scale=0.8]
\path[use as bounding box] (-2, 3) rectangle (7, -6);
\node (1) at (2,2) {\boxed{$(2,2)$}};
\node (2) at (0,1) {\boxed{$(1,1)$}};
\node (3) at (4,1) {\boxed{$(3,1)$}};
\node (4) at (6,1) {\boxed{$(4,1)$}};

\node (5) at (2,0) {$(2,0)$};
\node (6) at (0,-1) {$(1,-1)$};
\node (7) at (4,-1) {$(3,-1)$};
\node (8) at (6,-1) {$(4,-1)$};

\node (9) at (2,-2) {$(2,-2)$};
\node (10) at (0,-3) {$(1,-3)$};
\node (11) at (4,-3) {$(3,-3)$};
\node (12) at (6,-3) {$(4,-3)$};

\node (13) at (2,-4) {$(2,-4)$};
\node (14) at (0,-5) {$(1,-5)$};
\node (15) at (4,-5) {$(3,-5)$};
\node (16) at (6,-5) {$(4,-5)$};

\node (17) at (2,-6) {$(2,-6)$};

\node (box) [draw=red,rounded corners,fit = (6)] {};

%\draw [->] (1) edge (5);
\draw [->] (9) edge (5);
\draw [->] (9) edge (13);
\draw [->] (13) edge (17);

\draw [->] (2) edge (6);
\draw [->] (10) edge (6);
\draw [->] (10) edge (14);

\draw [->] (3) edge (7);
\draw [->] (11) edge (7);
\draw [->] (11) edge (15);

\draw [->] (4) edge (8);
\draw [->] (12) edge (8);
\draw [->] (12) edge (16);

\draw [->] (6) edge (5);
\draw [->] (7) edge (5);
\draw [->] (8) edge (5);

\draw [->] (10) edge (9);
\draw [->] (11) edge (9);
\draw [->] (12) edge (9);

\draw [->] (14) edge (13);
\draw [->] (15) edge (13);
\draw [->] (16) edge (13);

\draw [->] (5) edge (2);
\draw [->] (5) edge (3);
\draw [->] (5) edge (4);

%\draw [->] (6) edge (9);
%\draw [->] (7) edge (9);
%\draw [->] (8) edge (9);

\draw [->] (13) edge (10);
\draw [->] (13) edge (11);
\draw [->] (13) edge (12);

%\draw [->] (14) edge (17);
%\draw [->] (15) edge (17);
%\draw [->] (16) edge (17);

%\draw [->] (9) edge (14);
%\draw [->] (9) edge (15);
%\draw [->] (9) edge (16);

%\draw [->] (1) to[out=240, in=120, looseness=1.2] (9);

%\draw [->] (7) to[out=180, in=270, looseness=0.8] (2);
%\draw [->] (8) to[out=150, in=350, looseness=0.8] (2);

%\draw [->] (6) to[out=0, in=270, looseness=0.8] (3);
%\draw [->] (8) to (3);

%\draw [->] (6) to[out=0, in=220, looseness=0.8] (4);
%\draw [->] (7) to (4);

\draw [->] (5) to (10);
\draw [->] (5) to (11);
\draw [->] (5) to (12);

\draw [->] (1) to[out=160, in=200, looseness=1.5] (13);
\draw [->] (17) to[out=200, in=140, looseness=1.5] (1);
\end{tikzpicture}
\end{center}
\end{minipage}
\begin{minipage}[c]{4cm}
Mutation at $(1,-1)$.
\vspace*{1em}

\begin{equation*}
\deg(z_{1,-1}^{(2)}) = e_1.
\end{equation*}
\end{minipage}

\vspace*{1em}

\begin{align*}
	z_{1,-1}^{(2)} & = z_{1,-5}\left(z_{1,-3}\right)^{-1}f_1 + \left(z_{1,-3}\right)^{-1}z_{1,-1}z_{2,-4} \left( z_{2,-2}\right)^{-1}f_1 \\
	& + \left( z_{2,-2}\right)^{-1}z_{2,0}z_{3,-3}\left( z_{3,-1}\right)^{-1}z_{4,-3}\left( z_{4,-1}\right)^{-1}f_1 \\
	& + z_{3,-3}\left( z_{3,-1}\right)^{-1}\left( z_{4,-1}\right)^{-1}f_1 f_4 + \left( z_{3,-1}\right)^{-1}z_{4,-3}\left( z_{4,-1}\right)^{-1}f_1 f_3 \\
	& + z_{2,-2}\left( z_{2,0}\right)^{-1}\left( z_{3,-1}\right)^{-1}\left( z_{4,-1}\right)^{-1}f_1 f_3 f_4  + z_{1,-1}\left( z_{2,0}\right)^{-1}f_2,
\end{align*}

\begin{align*}
z_{1,-1}^{(2)}	& = \mathcal{J}\left( Y_{1,-5} + \left(Y_{1,-3}\right)^{-1} Y_{2,-4} + \left(Y_{2,-2}\right)^{-1}Y_{3,-3}Y_{4,-3} + Y_{3,-3}\left(Y_{4,-1}\right)^{-1}  \right.\\
	& \left. + \left(Y_{3,-1}\right)^{-1}Y_{4,-3}  + Y_{2,-2}\left(Y_{3,-1}\right)^{-1}\left(Y_{4,-1}\right)^{-1} + Y_{1,-1}\left(Y_{2,0}\right)^{-1}\right)f_1.
\end{align*}

\vspace*{1em}

As before, $z_{3,-1}^{(2)}$ and $z_{4,-1}^{(2)}$ are similar to $z_{1,-1}^{(2)}$ so we do not write them.

\vspace*{2em}

\begin{minipage}{10cm}
\begin{center}
\begin{tikzpicture}[scale=0.8]
\path[use as bounding box] (-2, 3) rectangle (7, -7);
\node (1) at (2,2) {\boxed{$(2,2)$}};
\node (2) at (0,1) {\boxed{$(1,1)$}};
\node (3) at (4,1) {\boxed{$(3,1)$}};
\node (4) at (6,1) {\boxed{$(4,1)$}};

\node (5) at (2,0) {$(2,0)$};
\node (6) at (0,-1) {$(1,-1)$};
\node (7) at (4,-1) {$(3,-1)$};
\node (8) at (6,-1) {$(4,-1)$};

\node (9) at (2,-2) {$(2,-2)$};
\node (10) at (0,-3) {$(1,-3)$};
\node (11) at (4,-3) {$(3,-3)$};
\node (12) at (6,-3) {$(4,-3)$};

\node (13) at (2,-4) {$(2,-4)$};
\node (14) at (0,-5) {$(1,-5)$};
\node (15) at (4,-5) {$(3,-5)$};
\node (16) at (6,-5) {$(4,-5)$};

\node (17) at (2,-6) {$(2,-6)$};

\node (box) [draw=red,rounded corners,fit = (7)] {};

%\draw [->] (1) edge (5);
\draw [->] (9) edge (5);
\draw [->] (9) edge (13);
\draw [->] (13) edge (17);

\draw [->] (6) edge (2);
\draw [->] (6) edge (10);
\draw [->] (10) edge (14);

\draw [->] (3) edge (7);
\draw [->] (11) edge (7);
\draw [->] (11) edge (15);

\draw [->] (4) edge (8);
\draw [->] (12) edge (8);
\draw [->] (12) edge (16);

\draw [->] (5) edge (6);
\draw [->] (7) edge (5);
\draw [->] (8) edge (5);

\draw [->] (10) edge (9);
\draw [->] (11) edge (9);
\draw [->] (12) edge (9);

\draw [->] (14) edge (13);
\draw [->] (15) edge (13);
\draw [->] (16) edge (13);

%\draw [->] (5) edge (2);
\draw [->] (5) edge (3);
\draw [->] (5) edge (4);

%\draw [->] (6) edge (9);
%\draw [->] (7) edge (9);
%\draw [->] (8) edge (9);

\draw [->] (13) edge (10);
\draw [->] (13) edge (11);
\draw [->] (13) edge (12);

%\draw [->] (14) edge (17);
%\draw [->] (15) edge (17);
%\draw [->] (16) edge (17);

%\draw [->] (9) edge (14);
%\draw [->] (9) edge (15);
%\draw [->] (9) edge (16);

%\draw [->] (1) to[out=240, in=120, looseness=1.2] (9);

%\draw [->] (7) to[out=180, in=270, looseness=0.8] (2);
%\draw [->] (8) to[out=150, in=350, looseness=0.8] (2);

%\draw [->] (6) to[out=0, in=270, looseness=0.8] (3);
%\draw [->] (8) to (3);

%\draw [->] (6) to[out=0, in=220, looseness=0.8] (4);
%\draw [->] (7) to (4);

%\draw [->] (5) to (10);
\draw [->] (5) to (11);
\draw [->] (5) to (12);

\draw [->] (1) to[out=160, in=200, looseness=1.5] (13);
\draw [->] (17) to[out=200, in=140, looseness=1.5] (1);
\end{tikzpicture}
\end{center}
\end{minipage}
\begin{minipage}[c]{4cm}
Mutation at $(3,-1)$.
\vspace*{1em}

\begin{equation*}
\deg(z_{3,-1}^{(2)}) = e_3.
\end{equation*}
\end{minipage}

\begin{minipage}{10cm}
\begin{center}
\begin{tikzpicture}[scale=0.8]
\path[use as bounding box] (-2, 3) rectangle (7, -6.5);
\node (1) at (2,2) {\boxed{$(2,2)$}};
\node (2) at (0,1) {\boxed{$(1,1)$}};
\node (3) at (4,1) {\boxed{$(3,1)$}};
\node (4) at (6,1) {\boxed{$(4,1)$}};

\node (5) at (2,0) {$(2,0)$};
\node (6) at (0,-1) {$(1,-1)$};
\node (7) at (4,-1) {$(3,-1)$};
\node (8) at (6,-1) {$(4,-1)$};

\node (9) at (2,-2) {$(2,-2)$};
\node (10) at (0,-3) {$(1,-3)$};
\node (11) at (4,-3) {$(3,-3)$};
\node (12) at (6,-3) {$(4,-3)$};

\node (13) at (2,-4) {$(2,-4)$};
\node (14) at (0,-5) {$(1,-5)$};
\node (15) at (4,-5) {$(3,-5)$};
\node (16) at (6,-5) {$(4,-5)$};

\node (17) at (2,-6) {$(2,-6)$};

\node (box) [draw=red,rounded corners,fit = (8)] {};

%\draw [->] (1) edge (5);
\draw [->] (9) edge (5);
\draw [->] (9) edge (13);
\draw [->] (13) edge (17);

\draw [->] (6) edge (2);
\draw [->] (6) edge (10);
\draw [->] (10) edge (14);

\draw [->] (7) edge (3);
\draw [->] (7) edge (11);
\draw [->] (11) edge (15);

\draw [->] (4) edge (8);
\draw [->] (12) edge (8);
\draw [->] (12) edge (16);

\draw [->] (5) edge (6);
\draw [->] (5) edge (7);
\draw [->] (8) edge (5);

\draw [->] (10) edge (9);
\draw [->] (11) edge (9);
\draw [->] (12) edge (9);

\draw [->] (14) edge (13);
\draw [->] (15) edge (13);
\draw [->] (16) edge (13);

%\draw [->] (5) edge (2);
%\draw [->] (5) edge (3);
\draw [->] (5) edge (4);

%\draw [->] (6) edge (9);
%\draw [->] (7) edge (9);
%\draw [->] (8) edge (9);

\draw [->] (13) edge (10);
\draw [->] (13) edge (11);
\draw [->] (13) edge (12);

%\draw [->] (14) edge (17);
%\draw [->] (15) edge (17);
%\draw [->] (16) edge (17);

%\draw [->] (9) edge (14);
%\draw [->] (9) edge (15);
%\draw [->] (9) edge (16);

%\draw [->] (1) to[out=240, in=120, looseness=1.2] (9);

%\draw [->] (7) to[out=180, in=270, looseness=0.8] (2);
%\draw [->] (8) to[out=150, in=350, looseness=0.8] (2);

%\draw [->] (6) to[out=0, in=270, looseness=0.8] (3);
%\draw [->] (8) to (3);

%\draw [->] (6) to[out=0, in=220, looseness=0.8] (4);
%\draw [->] (7) to (4);

%\draw [->] (5) to (10);
%\draw [->] (5) to (11);
\draw [->] (5) to (12);

\draw [->] (1) to[out=160, in=200, looseness=1.5] (13);
\draw [->] (17) to[out=200, in=140, looseness=1.5] (1);
\end{tikzpicture}
\end{center}
\end{minipage}
\begin{minipage}[c]{4cm}
Mutation at $(4,-1)$.
\vspace*{1em}

\begin{equation*}
\deg(z_{4,-1}^{(2)}) = e_4.
\end{equation*}
\end{minipage}

\begin{minipage}{10cm}
\begin{center}
\begin{tikzpicture}[scale=0.8]
\path[use as bounding box] (-2, 2.5) rectangle (7, -6.5);
\node (1) at (2,2) {\boxed{$(2,2)$}};
\node (2) at (0,1) {\boxed{$(1,1)$}};
\node (3) at (4,1) {\boxed{$(3,1)$}};
\node (4) at (6,1) {\boxed{$(4,1)$}};

\node (5) at (2,0) {$(2,0)$};
\node (6) at (0,-1) {$(1,-1)$};
\node (7) at (4,-1) {$(3,-1)$};
\node (8) at (6,-1) {$(4,-1)$};

\node (9) at (2,-2) {$(2,-2)$};
\node (10) at (0,-3) {$(1,-3)$};
\node (11) at (4,-3) {$(3,-3)$};
\node (12) at (6,-3) {$(4,-3)$};

\node (13) at (2,-4) {$(2,-4)$};
\node (14) at (0,-5) {$(1,-5)$};
\node (15) at (4,-5) {$(3,-5)$};
\node (16) at (6,-5) {$(4,-5)$};

\node (17) at (2,-6) {$(2,-6)$};

\node (box) [draw=red,rounded corners,fit = (5)] {};

%\draw [->] (1) edge (5);
\draw [->] (9) edge (5);
\draw [->] (9) edge (13);
\draw [->] (13) edge (17);

\draw [->] (6) edge (2);
\draw [->] (6) edge (10);
\draw [->] (10) edge (14);

\draw [->] (7) edge (3);
\draw [->] (7) edge (11);
\draw [->] (11) edge (15);

\draw [->] (8) edge (4);
\draw [->] (8) edge (12);
\draw [->] (12) edge (16);

\draw [->] (5) edge (6);
\draw [->] (5) edge (7);
\draw [->] (5) edge (8);

\draw [->] (10) edge (9);
\draw [->] (11) edge (9);
\draw [->] (12) edge (9);

\draw [->] (14) edge (13);
\draw [->] (15) edge (13);
\draw [->] (16) edge (13);

%\draw [->] (5) edge (2);
%\draw [->] (5) edge (3);
%\draw [->] (5) edge (4);

%\draw [->] (6) edge (9);
%\draw [->] (7) edge (9);
%\draw [->] (8) edge (9);

\draw [->] (13) edge (10);
\draw [->] (13) edge (11);
\draw [->] (13) edge (12);

%\draw [->] (14) edge (17);
%\draw [->] (15) edge (17);
%\draw [->] (16) edge (17);

%\draw [->] (9) edge (14);
%\draw [->] (9) edge (15);
%\draw [->] (9) edge (16);

%\draw [->] (1) to[out=240, in=120, looseness=1.2] (9);

%\draw [->] (7) to[out=180, in=270, looseness=0.8] (2);
%\draw [->] (8) to[out=150, in=350, looseness=0.8] (2);

%\draw [->] (6) to[out=0, in=270, looseness=0.8] (3);
%\draw [->] (8) to (3);

%\draw [->] (6) to[out=0, in=220, looseness=0.8] (4);
%\draw [->] (7) to (4);

%\draw [->] (5) to (10);
%\draw [->] (5) to (11);
%\draw [->] (5) to (12);

\draw [->] (1) to[out=160, in=200, looseness=1.5] (13);
\draw [->] (17) to[out=200, in=140, looseness=1.5] (1);
\end{tikzpicture}
\end{center}
\end{minipage}
\begin{minipage}[c]{4cm}
Mutation at $(2,0)$.
\vspace*{1em}

\begin{equation*}
\deg(z_{2,0}^{(3)}) = 0.
\end{equation*}
\end{minipage}

\begin{align*}
	z_{2,0}^{(3)} & = z_{2,-6}\left(z_{2,-4}\right)^{-1} + z_{1,-5}\left(z_{1,-3}\right)^{-1}\left(z_{2,-4}\right)^{-1} z_{2,-2}z_{3,-5}\left(z_{3,-3}\right)^{-1}z_{4,-5}\left(z_{4,-3}\right)^{-1}   \\
	& + \left(z_{1,-3}\right)^{-1}z_{1,-1}z_{3,-5}\left(z_{3,-3}\right)^{-1}z_{4,-5}\left(z_{4,-3}\right)^{-1} \\
	& + z_{1,-5}\left(z_{1,-3}\right)^{-1}\left(z_{3,-3}\right)^{-1}z_{3,-1}z_{4,-5}\left(z_{4,-3}\right)^{-1} \\
	& + z_{1,-5}\left(z_{1,-3}\right)^{-1}z_{3,-5}\left(z_{3,-3}\right)^{-1}\left(z_{4,-3}\right)^{-1}z_{4,-1} \\
	& + \left(z_{1,-3}\right)^{-1}z_{1,-1}z_{2,-4}\left(z_{2,-2}\right)^{-1}\left(z_{3,-3}\right)^{-1}z_{3,-1}z_{4,-5}\left(z_{4,-3}\right)^{-1}   \\
	& + \left(z_{1,-3}\right)^{-1}z_{1,-1}z_{2,-4}\left(z_{2,-2}\right)^{-1}z_{3,-5}\left(z_{3,-3}\right)^{-1}\left(z_{4,-3}\right)^{-1}z_{4,-1} \\
	& + z_{1,-5}\left(z_{1,-3}\right)^{-1}z_{2,-4}\left(z_{2,-2}\right)^{-1}\left(z_{3,-3}\right)^{-1}z_{3,-1}\left(z_{4,-3}\right)^{-1}z_{4,-1}\\
	& + \left(z_{1,-3}\right)^{-1}z_{1,-1}\left(z_{2,-4}\right)^{2}\left(z_{2,-2}\right)^{-2}\left(z_{3,-3}\right)^{-1}z_{3,-1}\left(z_{4,-3}\right)^{-1}z_{4,-1} \\
	& + \left(z_{2,-2}\right)^{-1}z_{2,0}z_{4,-5}\left(z_{4,-1}\right)^{-1} + \left(z_{2,-2}\right)^{-1}z_{2,0}z_{3,-5}\left(z_{3,-1}\right)^{-1} \\
	& + z_{1,-5}\left(z_{1,-1}\right)^{-1}\left(z_{2,-2}\right)^{-1}z_{2,0}  + \left({\color{red} t + t^{-1}}\right)z_{2,-4}\left(z_{2,-2}\right)^{-2}z_{2,0} \\
	& + z_{1,-3}\left(z_{1,-1}\right)^{-1}\left(z_{2,-2}\right)^{-2}\left(z_{2,0}\right)^{2}z_{3,-3}\left(z_{3,-1}\right)^{-1}z_{4,-3}\left(z_{4,-1}\right)^{-1} \\
		& + z_{4,-5}\left(z_{4,-3}\right)^{-1}\left(z_{4,-1}\right)^{-1}f_4 + z_{3,-5}\left(z_{3,-3}\right)^{-1}\left(z_{3,-1}\right)^{-1}f_3 \\
	& + z_{1,-5}\left(z_{1,-3}\right)^{-1}\left(z_{1,-1}\right)^{-1}f_1 + \left(z_{1,-3}\right)^{-1}z_{2,-4}\left(z_{2,-2}\right)^{-1}f_1\\ 
	&  + z_{2,-4}\left(z_{2,-2}\right)^{-1} \left(z_{3,-3}\right)^{-1}f_3 + z_{2,-4}\left(z_{2,-2}\right)^{-1} \left(z_{4,-3}\right)^{-1}f_4\\
	& + \left(z_{1,-1}\right)^{-1}\left(z_{2,-2}\right)^{-1}z_{2,0}z_{3,-3}\left(z_{3,-1}\right)^{-1}z_{4,-3}\left(z_{4,-1}\right)^{-1}f_1 \\
	& + z_{1,-3}\left(z_{1,-1}\right)^{-1}\left(z_{2,-2}\right)^{-1}z_{2,0}\left(z_{3,-1}\right)^{-1}z_{4,-3}\left(z_{4,-1}\right)^{-1}f_3 \\
	& + z_{1,-3}\left(z_{1,-1}\right)^{-1}\left(z_{2,-2}\right)^{-1}z_{2,0}z_{3,-3}\left(z_{3,-1}\right)^{-1}\left(z_{4,-1}\right)^{-1}f_4 \\
	& + \left(z_{1,-1}\right)^{-1}\left(z_{3,-1}\right)^{-1}z_{4,-3}\left(z_{4,-1}\right)^{-1}f_1 f_3 \\
	& + z_{1,-3}\left(z_{1,-1}\right)^{-1}\left(z_{3,-1}\right)^{-1}\left(z_{4,-1}\right)^{-1}f_3 f_4 \\
	& + \left(z_{1,-1}\right)^{-1}z_{3,-3}\left(z_{3,-1}\right)^{-1}\left(z_{4,-1}\right)^{-1}f_1 f_4 \\
	& + \left(z_{1,-1}\right)^{-1}z_{2,-2}\left(z_{2,0}\right)^{-1}\left(z_{3,-1}\right)^{-1}\left(z_{4,-1}\right)^{-1}f_1 f_3 f_4 + \left(z_{2,0}\right)^{-1}f_2.
\end{align*}

\begin{align*}
	= & \mathcal{J}\left( Y_{2,-6} + Y_{1,-5}\left(Y_{2,-4}\right)^{-1}Y_{3,-5}Y_{4,-5} \right.\\
	& + \left(Y_{1,-3}\right)^{-1}Y_{3,-5}Y_{4,-5}  + Y_{1,-5}\left(Y_{3,-3}\right)^{-1}Y_{4,-5} + Y_{1,-5}Y_{3,-5}\left(Y_{4,-3}\right)^{-1} \\
	& + \left(Y_{1,-3}\right)^{-1}Y_{2,-4}\left(Y_{3,-3}\right)^{-1}Y_{4,-5} + \left(Y_{1,-3}\right)^{-1}Y_{2,-4}Y_{3,-5}\left(Y_{4,-3}\right)^{-1} \\
	& + Y_{1,-5}Y_{2,-4}\left(Y_{3,-3}\right)^{-1}\left(Y_{4,-3}\right)^{-1} + \left(Y_{1,-3}\right)^{-1}\left(Y_{2,-4}\right)^{2}\left(Y_{3,-3}\right)^{-1}\left(Y_{4,-3}\right)^{-1}\\
	& + \left(Y_{2,-2}\right)^{-1}Y_{4,-5}Y_{4,-3} + \left(Y_{2,-2}\right)^{-1}Y_{3,-5}Y_{3,-3} + Y_{1,-5}Y_{1,-3}\left(Y_{2,-2}\right)^{-1}\\
	&  + \left({\color{red} t + t^{-1}}\right)Y_{2,-4}\left(Y_{2,-2}\right)^{-1} +  Y_{1,-3}\left(Y_{2,-2}\right)^{-2}Y_{3,-3}Y_{4,-3}\\
	& + Y_{4,-5}\left(Y_{4,-1}\right)^{-1} + Y_{3,-5}\left(Y_{3,-1}\right)^{-1} + Y_{1,-5}\left(Y_{1,-1}\right)^{-1} \\
	& + \left(Y_{1,-3}\right)^{-1}\left(Y_{1,-1}\right)^{-1}Y_{2,-4} + Y_{2,-4}\left(Y_{3,-3}\right)^{-1}\left(Y_{3,-1}\right)^{-1} + Y_{2,-4}\left(Y_{4,-3}\right)^{-1}\left(Y_{4,-1}\right)^{-1} \\
	& + \left(Y_{1,-1}\right)^{-1}\left(Y_{2,-2}\right)^{-1}Y_{3,-3}Y_{4,-3} + Y_{1,-3}\left(Y_{2,-2}\right)^{-1}\left(Y_{3,-1}\right)^{-1}Y_{4,-3} \\
	& + Y_{1,-3}\left(Y_{2,-2}\right)^{-1}Y_{3,-3}\left(Y_{4,-1}\right)^{-1} + \left(Y_{1,-1}\right)^{-1}\left(Y_{3,-1}\right)^{-1}Y_{4,-3}\\
	& + Y_{1,-3}\left(Y_{3,-1}\right)^{-1}\left(Y_{4,-1}\right)^{-1} + \left(Y_{1,-1}\right)^{-1}Y_{3,-3}\left(Y_{4,-1}\right)^{-1} \\
	& \left.+ \left(Y_{1,-1}\right)^{-1}Y_{2,-2}\left(Y_{3,-1}\right)^{-1}\left(Y_{4,-1}\right)^{-1} + \left(Y_{2,0}\right)^{-1}\right) \quad =  \mathcal{J}\left({\color{red}[L(Y_{2,-6})]_t }\right).
\end{align*}

\vspace*{2em}

\begin{rem}\label{remfinale}
Note here that the coefficient of $Y_{2,-4}\left(Y_{2,-2}\right)^{-1}$ is $t+t^{-1}$. This coefficient is actually the only coefficient of $[L(Y_{2,-6})]_t$ to not be equal to 1. The quantum cluster variables being bar-invariant, all coefficients in the decomposition of the cluster variables into sums of commutative polynomials in the variables $Y_{i,r}^{\pm 1}$ are symmetric polynomials in $t^{\pm 1}$ ($P(t^{-1})=P(t)$). Thus this is the only coefficient that could have been different from the corresponding one in $[L(Y_{2,-6})]_t$, as it could also have been equal to $2$, or any $t^k + t^{-k}$. 
\end{rem}

\bibliographystyle{alpha}
\bibliography{article}

\end{document}